\newtheorem{thm}{Theorem}[section]
\newtheorem{lemma}[thm]{Lemma}
\newtheorem{conj}[thm]{Conjecture}
\theoremstyle{definition}
\newtheorem{defn}[thm]{Definition}
\theoremstyle{remark}
\newtheoremstyle{named}{}{}{\itshape}{}{\bfseries}{.}{.5em}{#1 \thmnote{#3}}
\theoremstyle{named}
\newtheorem*{namedtheorem}{Theorem}
\newtheoremstyle{named}{}{}{\itshape}{}{\bfseries}{.}{.5em}{#1 \thmnote{#3}}
\theoremstyle{named}
\newtheoremstyle{named}{}{}{\itshape}{}{\bfseries}{.}{.5em}{#1 \thmnote{#3}}
\theoremstyle{named}
\newcommand{\Q}{\mathbb{Q}}
\newcommand{\Z}{\mathbb{Z}}
\newcommand{\N}{\mathbb{N}}
\newcommand{\C}{\mathbb{C}}
\newcommand{\F}{\mathbb{F}}
\newcommand{\Fbar}{\bar{\F}}
\newcommand{\calW}{\mathcal{W}}
\newcommand{\calC}{\mathcal{C}}
\newcommand{\calA}{\mathcal{A}}
\newcommand{\A}{\mathbb{A}}
\newcommand{\frakp}{\mathfrak{p}}
\newcommand{\frakq}{\mathfrak{q}}
\newcommand{\frakn}{\mathfrak{n}}
\newcommand{\frakN}{\mathfrak{N}}
\newcommand{\fraka}{\mathfrak{a}}
\newcommand{\scrL}{\mathscr{L}}
\newcommand{\boldx}{\mathbf{x}}
\newcommand{\calO}{\mathcal{O}}
\newcommand{\Sel}{\text{Sel}}
\newcommand{\cyc}{\text{cyc}}
\newcommand{\Tate}{\text{Tate}}
\newcommand{\Ig}{\text{Ig}}
\DeclareMathOperator{\Ima}{im}
\DeclareMathOperator{\Gal}{Gal}
\newcommand{\Pic}{\text{Pic}}
\newcommand{\Cl}[1]{\mathcal{C}\ell \left( #1 \right)}
\DeclareMathOperator{\lcm}{lcm}
\DeclareMathOperator{\Hom}{\text{Hom}}
\numberwithin{equation}{section}
\newcommand{\new}{\text{new}}
\newcommand{\alg}{\text{alg}}
\newcommand{\G}{\mathbf{G}}
\newcommand{\scrP}{\mathscr{P}}
\newcommand{\vbar}{\overline{v}}
\newcommand{\Qbar}{\overline{\mathbb{Q}}}
\newcommand{\calV}{\mathcal{V}}
\newcommand{\D}{\mathbf{D}}
\newcommand{\B}{\mathbf{B}}
\newcommand{\dR}{\text{dR}}
\newcommand{\et}{\text{\'et}}
\newcommand{\Fil}{\text{Fil}}
\newcommand{\Gr}{\text{Gr}}
\newcommand{\scrF}{\mathscr{F}}
\newcommand{\boldA}{\mathbf{A}}
\newcommand{\boldT}{\mathbf{T}}
\newcommand{\Char}{\text{char}}
\newcommand{\Spec}{\text{Spec}}
\newcommand{\can}{\text{can}}
\newcommand{\Lie}{\text{Lie}}
\newcommand{\Tw}{\text{Tw}}
\newcommand{\frakg}{\mathfrak{g}}
\newcommand{\cris}{\text{cris}}
\newcommand{\Sym}{\text{Sym}}
\newcommand{\Aut}{\text{Aut}}
\newcommand{\calM}{\mathcal{M}}
\newcommand{\GL}{\text{GL}}
\newcommand{\bbH}{\mathbb{H}}
\newcommand{\rec}{\text{rec}}
\newcommand{\frakX}{\mathfrak{X}}
\newcommand{\frakc}{\mathfrak{c}}
\newcommand{\anal}{\text{anal}}
\newcommand{\ur}{\text{ur}}
\begin{document}

\title[Congruent modular forms \& anticyclotomic Iwasawa theory]{Congruent modular forms and anticyclotomic Iwasawa theory}


\author[1, 2]{\fnm{Dac-Nhan-Tam} \sur{Nguyen}}\email{tamnguyen@math.ubc.ca}



\affil[1]{\orgdiv{Department of Mathematics}, \orgname{University of British Columbia}, \orgaddress{\street{1984 Mathematics Road}, \city{Vancouver}, \postcode{V6T 1Z2}, \state{British Columbia}, \country{Canada}}}
\affil[2]{ORCID: 0000-0002-3117-5028}




\abstract{Let $p$ be an odd prime. Consider normalized newforms $f_1, f_2$ such that both forms satisfy the Heegner hypothesis for an imaginary quadratic field $K$ and suppose that they induce isomorphic residual Galois representations. In the work of Greenberg-Vatsal \cite{GV00} and Emerton-Pollack-Weston \cite{EPW06}, the authors compare the cyclotomic Iwasawa $\mu$ and $\lambda$-invariants of $f_1$ and $f_2$. We extend this to the anticyclotomic indefinite setting by comparing the BDP $p$-adic $L$-functions attached to $f_1$ and $f_2$. Using this comparison, we obtain arithmetic implications for both generalized Heegner cycles and the Iwasawa main conjecture.}

\keywords{Anticyclotomic Iwasawa theory, congruent modular forms, $p$-adic $L$-functions, Heegner cycles.}


\pacs[MSC Classification]{11R23 (primary); 11G40 (secondary)}

\maketitle

\section*{Acknowledgements}
I would like to thank my advisors Antonio Lei and Sujatha Ramdorai for their support during the time I carried out this work. The project grew out of a problem suggested by Antonio Lei, and I am thankful for his continuous guidance and feedback. I would also like to thank Ashay Burungale for his interest in my work and for providing important feedback on previous drafts of this paper.

I would like to thank Devang Agarwal for a helpful discussion on quadratic twists that appeared in Lemma \ref{lem:cong}. Finally, I wish to thank Francesc Castella for answering some questions I had at the early stage of this project and for his work with Ming-Lun Hsieh \cite{CH18}.

\section{Introduction}

Let $f \in S_{2r}(\Gamma_0(N))^\new$ be a normalized newform of weight $2r$ and level $N$ that is an eigenform for all Hecke operators. Fix an odd prime $p \nmid N$ and let $F$ be a finite extension of $\Q_p$ containing the Fourier coefficients of $f$. Let $V_f$ be the  $p$-adic Galois representation attached to $f$ and let 
\[\rho_f: G_\Q \rightarrow \Aut(V_f(r))\] be its self-dual Artin twist. Here, $G_\Q$ denotes the absolute Galois group $\Gal(\overline{\Q}/\Q)$. Denote by $\overline{\rho}_f$ the associated semisimplified residual representation. 

Let $K/\Q$ be an imaginary quadratic field of discriminant $-D_K$ and let $p > 2$ be a rational prime that split in $K$ as $(p) = \frakp \overline{\frakp}$. Define the following hypothesis for $f \in S_{2r}(\Gamma_0(N))^{\new}$
\[\begin{cases} \label{Hyp}
	p \nmid 2 (2r - 1)! N \phi(N), \\
	\text{every prime } \ell \mid N \text{ is split in $K/\Q$}. \\ 
\end{cases} \tag{Heeg}\]

The second condition is known as the strong Heegner hypothesis. In such a setting, one may construct the Bertolini-Darmon-Prasanna (BDP) anticyclotomic $p$-adic $L$-function $\scrL_{\frakp}(f)$ attached to $f$ in the sense of \cite{BDP13, Bra11, CH18}. This paper closely follows the work of Castella-Hsieh \cite{CH18}, whose construction of the $p$-adic $L$-function originates from the work of Brako\v{c}evi\'{c} \cite{Bra11}. This $p$-adic $L$-function is defined as an element of the Iwasawa algebra $\calW \llbracket \Gamma_K^{-} \rrbracket$ where $\calW$ is a finite extension of the completed maximal unramified extension $\widehat{\Q_p^{nr}}$ over $\Q_p$ and $\Gamma_K^{-}$ is the Galois group of the anticyclotomic extension over $K$.

It is natural to ask how the Iwasawa $\mu$ and $\lambda$-invariants of $\scrL_{\frakp}(f_1)$ and $\scrL_{\frakp}(f_2)$ differ for newforms $f_1$ and $f_2$ whose residual representations are isomorphic. This type of question was first studied in \cite{GV00} over the cyclotomic extension, which was then generalized in \cite{EPW06}. The papers \cite{PW11}, \cite{Kim17}, \cite{CKL17} give analogous results in the definite anticyclotomic setting. 

In the indefinite anticyclotomic setting, congruences between the BDP $p$-adic $L$-functions have been studied in \cite{LMX23} for the weight $2$ case. In this setting, Kriz-Li studied the logarithms of Heegner points twisted by unramified characters which are interpolated by the BDP $p$-adic $L$-functions (see \cite[Theorem 3.9]{KL19}). The results in this paper can be seen as generalizations of \cite{LMX23, KL19} to forms of higher weights and generalized Heegner cycles. The techniques in this paper differ from \cite{LMX23} and the results are proved under fewer hypotheses. Moreover, for modular forms that are residually isomorphic with respect to an arbitrary prime power, we are able to show congruences between their $p$-adic $L$-functions with respect to the same prime power (see Theorem \ref{thm:main}). In a paper by Castella {\it et al.} \cite{CGLS22}, the authors use congruence methods to acquire new instances of the anticyclotomic Iwasawa main conjecture at Eisenstein primes. Their work can be seen as an extension of \cite[Theorem (1.3)]{GV00} to the BDP $p$-adic $L$-function whereas our work (in particular, Theorem \ref{thm:IMC}) extends \cite[Theorem (1.4)]{GV00}.

In \cite{KL19}, the authors study congruences by looking at the stabilizations of $f_1$ and $f_2$ at various primes $\ell$. These stabilizations are based on Hecke operators that act on classical modular form $f \in S_{2r}(\Gamma_0(N))$ via $f(q) \mapsto f(q^\ell)$. To study how the anticyclotomic $p$-adic $L$-function varies, this paper introduces some suitable moduli interpretations of these Hecke operators in the context of Igusa schemes in Section \ref{sec:V_l}, which will be relevant for the construction via Serre-Tate coordinates as defined in \cite{CH18, Bra11}. We also note that the moduli interpretations of some Hecke operators attached to the prime $p$ are discussed in \cite[Section 4.1.10]{Hid04}.

We also explore arithmetic implications for Heegner cycles in Section \ref{sec:Heeg}, as well as the anticyclotomic Iwasawa main conjecture in Section \ref{sec:IMC}. We now state the main results of this paper. 

Suppose that $f_1 \in S_{2r_1}(\Gamma_0(N_1))^\new, f_2 \in S_{2r_2}(\Gamma_0(N_2))^\new$ are normalized Hecke eigenforms whose coefficients lie in $p$-adic field $F$. Suppose that the induced  semi-simplified $\mod{\varpi}^m$ Galois representations $\bar{\rho}_{f_1}, \bar{\rho}_{f_2}: G_\Q \rightarrow \GL_2(\calO_{F}/\varpi^{m}\calO_{F})$ are isomorphic, where $\varpi$ is the uniformizer of $\calO_{F}$. Let $\calW$ be the ring of integers of a finite extension of $\widehat{\Q_p^{nr}}$ containing $F$.

\begin{namedtheorem}[A (Theorem \ref{thm:main})]  
	Suppose that both $f_1, f_2$ satisfy hypothesis \eqref{Hyp} for $K/\Q$. One may write $(N_1) = \frak{N}_1 \overline{\frak{N}_1}$, $(N_2) = \frak{N}_2\overline{\frak{N}_2}$ as ideals in $\calO_K$. For each prime $\ell \mid N_1 N_2$, let $v \mid \frakN_1 \frakN_2$ be the corresponding prime above $\ell$. Then the following congruence holds:
	\begin{align*}
		\prod_{\ell \mid N_1 N_2} \scrP_{\vbar}(f_1) \scrL_{\frakp}(f_1) \equiv \prod_{\ell \mid N_1 N_2} \scrP_{\vbar}(f_2)  \scrL_{\frakp}(f_2)  \pmod{\varpi^m \calW \llbracket \Gamma_K^{-} \rrbracket},
	\end{align*}
	where $\scrP_{\vbar}(f_1)$ and $\scrP_{\vbar}(f_2)$ are defined in Definition \ref{defn:Euler}.
	Moreover, one has the following:
	\begin{enumerate}
		\item $\mu(\scrL_{\frakp}(f_1)) = 0$ if and only if $\mu(\scrL_{\frakp}(f_2)) = 0$.
		\item Assuming that $\mu(\scrL_{\frakp}(f_1)) = \mu(\scrL_{\frakp}(f_2)) = 0$, 
		\[\sum_{\ell \mid N_1 N_2} \lambda(\scrP_{\vbar}(f_1)) + \lambda(\scrL_{\frakp}(f_1)) = \sum_{\ell \mid N_1 N_2} \lambda(\scrP_{\vbar}(f_2)) +  \lambda(\scrL_{\frakp}(f_2)).\]
	\end{enumerate}
\end{namedtheorem}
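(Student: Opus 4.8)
The plan is to reduce the congruence between $p$-adic $L$-functions to a congruence between the $p$-adic modular forms that represent them under the Serre-Tate coordinate construction, following \cite{CH18, Bra11}. First I would recall that $\scrL_{\frakp}(f_i)$ is obtained by evaluating a certain $p$-adic modular form $\theta(f_i)$ — built from the $p$-adic avatar of $f_i$ together with a choice of CM point and differential — against the anticyclotomic family of characters; concretely, $\scrL_{\frakp}(f_i)$ is the image of a measure on $\Gamma_K^{-}$ whose moments are the Serre-Tate expansion coefficients of $\theta(f_i)$. The point is that congruences between $q$-expansions of the classical forms $f_1, f_2$ modulo $\varpi^m$ propagate, via the $t$-expansion at a CM point (the Serre-Tate coordinate being a power-series variable $t$ with the right integrality), to congruences between the measures, hence between the $p$-adic $L$-functions in $\calW \llbracket \Gamma_K^{-} \rrbracket / \varpi^m$.

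The subtlety is that $f_1$ and $f_2$ have different levels $N_1, N_2$ and possibly different weights $2r_1, 2r_2$, so they are not literally congruent as $q$-expansions — one must first move them into a common space. For the weights, since $\bar\rho_{f_1} \cong \bar\rho_{f_2}$ forces the weights to be congruent modulo $(p-1)p^{m-1}$ (up to a twist), I would multiply by a suitable power of a $p$-adic Eisenstein series / Hasse invariant lift so that both become $p$-adic modular forms of the same weight with the same $q$-expansion mod $\varpi^m$; this is exactly the mechanism of \cite{EPW06} transported to the Hida-theoretic setting. For the levels, I would use the degeneracy / $V_\ell$ operators whose moduli interpretation on Igusa schemes is set up in Section \ref{sec:V_l}: stabilizing $f_1$ at the primes $\ell \mid N_2$ (and $f_2$ at the primes $\ell \mid N_1$) brings both to level $N_1 N_2$, and this stabilization is precisely what introduces the Euler-type factors $\scrP_{\vbar}(f_i)$ of Definition \ref{defn:Euler} into the interpolation formula. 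Tracking how each $V_\ell$ acts on the Serre-Tate expansion — this is where the moduli interpretation from Section \ref{sec:V_l} is essential, together with the $p$-part computation analogous to \cite[Section 4.1.10]{Hid04} — gives the displayed identity with the product $\prod_{\ell \mid N_1 N_2} \scrP_{\vbar}(f_i)$ on each side.

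Granting the congruence, parts (1) and (2) are formal consequences of the structure theory of $\calW \llbracket \Gamma_K^{-} \rrbracket$, which (fixing a topological generator) is a power series ring $\calW\llbracket T \rrbracket$, a regular local ring with $\varpi$ and $T$ as a system of parameters. A congruence $G_1 \equiv G_2 \pmod{\varpi^m}$ with $m \geq 1$ implies $G_1 \equiv G_2 \pmod{\varpi}$, so $G_1$ is a unit times a power of $\varpi$ (i.e.\ $\mu(G_1) = 0$) if and only if the same holds for $G_2$; this gives (1) applied to $G_i = \prod_\ell \scrP_{\vbar}(f_i)\,\scrL_{\frakp}(f_i)$, after noting $\mu$ is additive and the Euler factors $\scrP_{\vbar}(f_i)$ have $\mu = 0$ (they are, by their definition, units or linear polynomials with unit content, which I would verify from Definition \ref{defn:Euler}). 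When $\mu = 0$ on both sides, reducing the congruence modulo $\varpi$ gives an equality in $(\calW/\varpi)\llbracket T\rrbracket$ of the mod-$\varpi$ reductions, whose $T$-adic valuations are the respective $\lambda$-invariants; additivity of $\lambda$ under products then yields the stated equality in (2).

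The main obstacle I anticipate is the second paragraph: precisely matching the level-raising operators $V_\ell$ on the Igusa-scheme / Serre-Tate side with the classical $\ell$-stabilization, and confirming that they contribute exactly $\scrP_{\vbar}(f_i)$ — including getting the correct normalization at primes $\ell$ that ramify in the relevant way and keeping the CM point and the choice of Néron differential compatible across the two levels. Everything downstream of a clean such comparison is bookkeeping with $\mu$ and $\lambda$ in a two-dimensional regular local ring.
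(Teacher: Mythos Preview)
Your overall architecture matches the paper's: deplete both forms at every prime $\ell \mid N_1N_2$ so that the resulting $q$-expansions are congruent $\pmod{\varpi^m}$, invoke the moduli description of $V_\ell$ on the Igusa tower (this is exactly Theorem~\ref{l-dep-2}, built from Lemmas~\ref{lem:Vell-1} and~\ref{lem:Vell-2}) to identify $\scrL_\frakp(f_i^{(N_1N_2)}) = \prod_{\ell}\scrP_{\vbar}(f_i)\,\scrL_\frakp(f_i)$, and then read off (1) and (2) from the structure of $\calW\llbracket T\rrbracket$ together with $\mu(\scrP_{\vbar})=0$ (Lemma~\ref{lem:mu=0}). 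Two remarks on the details. First, a minor one: the paper depletes \emph{both} $f_1$ and $f_2$ at \emph{all} primes dividing $N_1N_2$, not just $f_1$ at the primes of $N_2$ and vice versa; otherwise the Fourier coefficients at primes $\ell\mid\gcd(N_1,N_2)$ need not match. Also, $\scrP_{\vbar}$ is not linear in $T$ in general, since $\gamma_v=\gamma_0^{a}$ with $a\in\Z_p$; the paper checks $\mu=0$ by an explicit mod-$\varpi$ expansion.

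The substantive difference is your treatment of the weight. You propose equalizing the weights by multiplying by powers of a Hasse/Eisenstein lift, as in \cite{EPW06}. The paper does \emph{not} do this: the BDP $L$-function depends on the weight through the operator $\theta^{-r}$ and the factor $N(\fraka)^{-r}$, so changing $r$ is not a harmless normalization. Instead, Lemma~\ref{lem:cong} compares $\theta^{-r_1}$ and $\theta^{-r_2}$ directly. From $\det\bar\rho_{f_i}=\chi_{\cyc}^{2r_i-1}$ one gets $2r_1\equiv 2r_2\pmod{\varphi(p^{m'})}$; if $r_1\equiv r_2$ there is nothing to do, and if not then $x^{-r_1}\equiv \bigl(\tfrac{x}{p}\bigr)x^{-r_2}$, so the two $L$-functions differ by $\Tw_\psi$ for a quadratic anticyclotomic Hecke character $\psi$ of conductor $p$. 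The punchline is that $\psi$ has order $2$ while $\Gamma_K^{-}$ is pro-$p$ with $p$ odd, so $\Tw_\psi$ is trivial on $\calW\llbracket\Gamma_K^{-}\rrbracket$. Your Eisenstein route, if pushed through, would land on exactly this parity obstruction and would still need this observation to close; the paper's direct comparison is shorter and isolates the one genuinely anticyclotomic input.
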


{\it Notation. }Throughout this paper, we fix embeddings $\iota_\infty: \overline{\Q} \hookrightarrow \C$ and $\iota_p: \overline{\Q} \hookrightarrow \C_p$. Let $v_p(\cdot)$ be the normalized additive valuation on $\C_p$ for which $v_p(p) = 1$. 

For each number field $L$, the embedding $\iota_p$ determines a choice of inclusion $L \subset \C_p$, or equivalently a prime in $L$ above $p$. We assume that this choice gives rise to the prime $\frakp$ in $K$ that is consistent with the splitting $p \calO_K = \frakp \overline{\frakp}$ given in the Introduction. We will denote by $L_p$ the completion of $L$ with respect to the prime induced by $\iota_p$. We will also denote by $\A_L$ the adeles of $L$ and $\widehat{L}$ the finite adeles. Moreover, let $L_\infty := \prod_{v \mid \infty} L_v$.

Let $K[c]$ be the ring class field of conductor $c$ over $K$, and write $K[p^\infty]$ for $\bigcup_{n \geq 0} K[p^n]$. Denote by $\tilde{\Gamma}$ the Galois group of $K[p^\infty]/K$, and let $\Gamma_K^{-}$ be the maximal pro-$p$ quotient so that $\Gamma_K^{-} = \Gal(K_\infty/K)$ is the Galois group of the anticyclotomic extension $K_\infty = \bigcup_{n \geq 0} K_n$ over $K$. Let $\rec_{\frakp}: \Q_p^\times = K_\frakp^\times \rightarrow \Gal(K^{ab}/K) \rightarrow \Gal(K[p^\infty]/K)$ be the local reciprocity map. We also write $K(\frakp^\infty)$ for the ray class field of conductor $\frakp^\infty$, and $K[c](\frakp^\infty)$ for the compositum of $K[c]$ and $K(\frakp^\infty)$.

\section{Geometric and $p$-adic modular forms} \label{modForm}

We follow the expositions in Brako\v{c}evi\'{c} \cite{Bra11} and Castella-Hsieh \cite{CH18} and recall the definitions of (geometric, $p$-adic) modular forms of levels $\Gamma_0(N)$ and $\Gamma_1(N)$. The main references for this section are \cite{Kat73}, \cite[Section 3]{Hid04}. 

Let $S$ denote a $\Z_{(p)}$-algebra and let $R$ denote some algebra over $S$. For an integer $N$, let $\mu_N$ be the group scheme of the $N$-th roots of unity and let $A[N]$ be the group scheme of the $N$-torsion points of an abelian variety $A$. 

Consider the isomorphism classes of triples $[(A, \eta_N, \omega)_{/R}]_{/\simeq}$, where $A/R$ is an elliptic curve and $\eta_N: \mu_N \rightarrow A[N]$ is the $\Gamma_1(N)$-level structure and $\omega \in H^0(A/R, \underline{\Omega}^1_{A/R})$ is a differential $1$-form. The functor classifying such triples is representable by an affine scheme $\calM_{\Gamma_1(N)}$ defined over $\Z[1/6N]$ \cite[Theorem 3.1]{Hid04}.

\begin{defn} (\cite[Section 3.2.3]{Hid04}
	\label{gamma-1-form} For each $S$-algebra $R$, consider the set of all triples $[(A, \eta_N, \omega)_{/R}] \in \calM_{\Gamma_1(N)}(R)$. A geometric modular form $f$ of weight $k$ and level $\Gamma_1(N)$ over $R$ is a rule assigning to such every triple $(A, \eta, \omega)_{/R}$ a value $f(A, \eta, \omega) \in R$ satisfying the following:	\begin{enumerate}
		\item $f(A, \eta, \omega) = f(A', \eta', \omega')$ if $(A, C, \omega) \simeq (A', C', \omega')$ over $R.$
		\item For any $S$-algebra homomorphism $\phi: R \rightarrow R'$, we have
		$$f((A, \eta, \omega) \otimes_R R') \simeq \phi(f(A, \eta, \omega))$$
		\item $f(A, \eta, \lambda \omega) = \lambda^{-k} f(A, \eta, \omega)$ for any $\lambda \in R^\times.$
		\item Let $\Tate(q)$ be the Tate curve $\G_m/q^{\Z}$ over $\Z(\!(q)\!)$, equipped with a level structure $\eta$ and a choice of differential $\omega$. Then $(\Tate(q), \eta, \omega))$ is defined over $S[\mu_d](\!(q^{1/d})\!)$ for some $d \mid N$, and we impose that $f(\Tate(q), \eta, \omega)) \in S[\mu_d] \llbracket q^{1/d} \rrbracket$ for every such $(\Tate(q), \eta, \omega).$
		
		Moreover, we say that $f$ is of level $\Gamma_0(N)$ if it also satisfies
		
		\item $f((A, \eta_N \circ b, \omega)_{/R}) = f(A, \eta_N, \omega)$ for any $b \in (\Z/N\Z)^\times$ with the canonical action of $(\Z/N\Z)$ on $\mu_N$ \cite[Section 3.1]{Bra11}.
	\end{enumerate}
\end{defn}

Define the $q$-expansion of $f$ as $f(\Tate(q), \eta_{\can}, du/u) \in S \llbracket q \rrbracket$, where $\eta_\can: \mu_N \oplus \mu_{p^\infty} \hookrightarrow \G_m \rightarrow \G_m/q^{\Z}$ is the canonical level structure, and $u$ is the canonical parameter of $\G_m = \Spec(\Z[u, u^{-1}]).$

To define $p$-adic modular forms, we first recall the Igusa scheme $\Ig(N)/\Z_{p}$, which is the moduli space parametrizing isomorphism classes of elliptic curves with $\Gamma_1(Np^\infty)$-structure. More precisely, for each $\Z_{p}$-algebra $R$, $\Ig(N)(R)$ is the isomorphism classes of tuples $(A, \eta)_{/R}$ where $A/R$ is an elliptic curve and $\eta = \eta_N \oplus \eta_p: \mu_N \oplus \mu_{p^\infty} \hookrightarrow A[N] \oplus A[p^\infty]$ is an immersion of group schemes (\cite[Section 3.2.7]{Hid04}).

\begin{defn} (\cite[Section 3.2.9]{Hid04}, \cite[Section 2.1]{CH18})
	Let $S$ be a $p$-adic ring and denote $S_m : = S/p^m S$. Define the space of $p$-adic modular forms of level $\Gamma_1(N)$ over $S$, denoted $V_{p}(\Gamma_1(N), S), $ as \[V_p(\Gamma_1(N), S) = H^0(\widehat{\Ig}(N), \calO_{\widehat{\Ig}(N)/S}) =  \varprojlim_{m} H^0(\Ig(N), \calO_{\Ig(N)/S_m}),\]
	where $\widehat{\Ig}(N)$ is the formal completion of $\Ig(N)$. In particular, $f$ is a function assigning to each $[(A, \eta)_{/R}] \in \Ig(N)(R)$ a value $f(A, \eta) \in R$, and they satisfy the following conditions:
	\begin{enumerate}
		\item $f((A, \eta)_{/R}) = f((A', \eta')_{/R})$ if $(A, \eta)_{/R} \simeq (A', \eta')_{/R}$.
		\item For any continuous homomorphisms of $S$-algebra $\phi: R \rightarrow R'$, we have
		\[f((A, \eta) \otimes_R R') \simeq \phi(f(A, \eta))\]
		\item For any level structure $\eta_N$ of type $\Gamma_1(N)$ on the Tate curve $\Tate(q)$, $f(\Tate(q), \eta_N \oplus \eta_p^{\can}) \in S\llbracket q^{1/N}\rrbracket$, where $\eta_p^{\can}$ is determined by the canonical image of $\zeta_p$ via $\boldmath{\G}_m \rightarrow \Tate(q).$
		
	\end{enumerate}
	A $p$-adic modular form is said to be of weight $k$ if $f(A, z^{-1} \eta_p, \eta_N) = z^k f(A, \eta_p, \eta_N)$ for all $z \in \Z_p^\times$. 
\end{defn}  

A geometric modular form gives rise to a $p$-adic modular form in the sense of \cite[(1.10.15)]{Kat78}: Let $R$ be a complete local $S$-algebra, and let $[(A, \eta)_{/R}] \in \Ig(N)(R)$. The $\Gamma_1(Np^\infty)$-level structure $\eta = \eta_N \oplus \eta_p$ determines a map $\widehat{\eta}_p: \widehat{\G}_m \xrightarrow{\sim} \widehat{A}$ \cite[(1.10.1)]{Kat78}(see also \cite[Proposition 1]{Tat67}). This in turn defines a differential $\omega(\widehat{\eta}_p): \Lie(A) \simeq \Lie(\widehat{A}) \rightarrow \Lie(\widehat{\G}_m) = R$. One can then define {\it the $p$-adic avatar} $\widehat{f}$ of $f$ (\cite{CH18}) by letting $\widehat{f}(A, \eta) = f(A, \eta, \omega(\widehat{\eta_p})).$
\section{CM points} 
This section follows \cite[Section 2]{CH18}.	\label{sec:CM}
Let $K$ be an imaginary quadratic field of discriminant $-D_K < 0$, and suppose that $p$ is split as $p = \frakp \overline{\frakp}$ in $\calO_K$. Let $f \in S_{2r}(\Gamma_0(N))^{\new}$ be a newform satisfying hypothesis \eqref{Hyp}. One may write $N = \frakN \overline{\frakN}$ for some ideal $\frakN$ in $\calO_K$. For a positive integer $c$, let $\calO_c := \Z + c \calO_K$ be the order of conductor $c$ in $K$, so that $\Gal(K[c]/K) \simeq \Cl{\calO_c}$.

For each prime-to-$\frakN\frakp$ integral ideal $\fraka$ of $\calO_c$, there is a CM point $x_\fraka = (A_\fraka, \eta_\fraka)$ as constructed in \cite[Section 2.3]{CH18} where $A_\fraka$ is the complex elliptic curve $\C/\fraka^{-1}$. Such a point is defined over a discrete valuation ring inside $\calV = \iota_p^{-1}(\calO_{\C_p}) \cap K^{ab}$. If $\fraka = \calO_c$, we write $(A_c, \eta_c)$ for $(A_{\calO_c}, \eta_{\calO_c})$. In this case, it is immediate that $A_\fraka = A_c/ A_c[\fraka]$ and the isogeny $\lambda_\fraka: A_c \rightarrow A_\fraka$ induced by the quotient map $\C/\calO_c \rightarrow \C/\fraka^{-1}$ yields $\eta_\fraka = \lambda_\fraka \circ \eta_c.$ An equivalent construction is also available in \cite[Section 5.1]{Bra11}. 

Let $\widehat{\Q_p^{nr}}$ be the $p$-adic completion of maximal unramified extension $\Q_p^{nr}$ of $\Q_p$. If $\fraka$ is a prime-to-$\frakN \frakp$ ideal of $\calO_c$ with $p \nmid c$, then $(A_\fraka, \eta_\fraka)$ has a model defined over $\calV^{nr}: = \calW \cap K^{ab}$ where $\calW$ is the ring of integers of $\widehat{\Q_p^{nr}}$. We will also denote this model by $(A_\fraka, \eta_\fraka)$ for the rest of this article. For $\calO_c = \calO_K$, wr																																																															ite $A$ for $A_{\calO_K}$ and fix a N\'{e}ron differential $\omega_A$ of $A$ over $\calV^{nr}$.


If we let $\bbH$ be the complex upperhalf plane, then there is a complex uniformization 
\[Y_1(Np^n)(\C) = \GL_2(\Q)^{+} \backslash \bbH \times \GL_2(\widehat{\Q}) / U_1(Np^n)
\]
of complex points on the modular curve. Since the generic fiber $\Ig(N)_{/\Q}$ is given by
\[\Ig(N)_{/\Q} = \varprojlim_{n} Y_1(Np^n),\]
there is also a uniformization 
\begin{align*}
	\bbH\times \GL_2(\widehat{\Q}) & \rightarrow \Ig(N)(\C) \\ x = (\tau_x, g_x) & \mapsto (A_x, \eta_x)
\end{align*}
where $(A_x, \eta_x)$ is the corresponding moduli description. We refer readers to \cite[Section 2.1]{CH18} for the explicit form of this map. Moreover, we will also denote the right action of $\GL_2(\widehat{\Q})$ on $x = [(\tau_x, g_x)] \in \Ig(N)(\C)$ as
\[(\tau_x, g_x) * h := (\tau_x, g_x h).\]
Now, fix a choice of basis element $\vartheta$ for $\calO_K = \Z \oplus \Z \vartheta$.
Consider the embedding $K \hookrightarrow \GL_2(\Q)$ by the regular representation \cite[14]{Bra11}:
\[\rho(\alpha) \begin{pmatrix}
	\vartheta \\ 1
\end{pmatrix} = \begin{pmatrix}
	\alpha \vartheta \\ \alpha \end{pmatrix}.\]

For the choice of $\vartheta$ given in \cite[Section 2.3]{CH18}:
\[\vartheta = \frac{D' + \sqrt{-D_K}}{2}, \text{ where } D' = \begin{cases}
	D_K & \text{ if } 2 \nmid D_K \\
	D_K/2 & \text{ if } 2 \mid D_K
\end{cases},\]
the embedding $\rho: K \hookrightarrow \GL_2(\Q)$ is of the form
\[a + b \vartheta \mapsto \begin{pmatrix}
	a(\vartheta + \overline{\vartheta}) + b & - a \vartheta \overline{\vartheta} \\
	a & b
\end{pmatrix}.\]
Tensoring with $\A^{(\infty)}_{\Q}$ gives an embedding $\rho:K^\times \backslash \widehat{K}^\times \hookrightarrow \GL_2(\Q)\backslash \GL_2(\widehat{\Q}).$
Denote by $[\eta, g]$ the image of $(\eta, g)$ under the projection $\bbH\times \GL_2(\widehat{\Q}) \rightarrow \Ig(N)(\C)$. Then $[\eta, g] \in \Ig(N)(K^{ab}), $ and Shimura's reciprocity law states that
\[\rec_K(a)[(\vartheta, g)] = [\vartheta, \rho(\overline{a}) g]\]
where $\rec_K: K^\times \backslash \widehat{K}^\times \rightarrow \Gal(K^{ab}/K)$ is the geometrically normalized reciprocity law. We apply this to CM points as follows. Let $[(\vartheta, \xi_c)] \in \Ig(N)(\C)$ be the complex uniformization of the CM point $x_c := [(A_c, \eta_c)]$ for some $\xi_c \in \GL_2(\widehat{\Q})$. For an $\calO_c$-ideal $\fraka$ that is prime to $\frakN \frakp$, let $x_\fraka = (A_\fraka, \eta_a)$ and $a \in  \widehat{K}^{(cp)\times}$ be an idele such that $\fraka = a \widehat{\calO}_c \cap K$. Both $x_\fraka$ and $x_c$ are defined over $K[c](\frakp^\infty)$ and \[x_\fraka = [(A_\fraka, \eta_a)] = [\vartheta, \rho(\overline{a})^{-1} \zeta_c] = x_c^{\sigma_\fraka} \in \Ig(N)(K[c](\frakp^\infty))\] where $\sigma_\fraka = \rec_K(a^{-1})_{\mid K[c](\frakp^\infty)} \in \Gal(K[c](\frakp^\infty)/K)$, following Shimura's reciprocity law.
\section{Anticyclotomic $p$-adic $L$-functions} \label{sec:defn:L-func}

Let $f \in S_{2r}(\Gamma_0(N))^{\new}$ be a classical normalized eigenform, which we implictly assume to be an eigneform with respect to all Hecke operators unless otherwise stated. We will also denote by $f$ the associated geometric modular form, and let $f^{\flat}$ be the $p$-depleted geometric modular form with $q$-expansion $f^{\flat}(q) = \sum_{p \nmid n} a_n(f) q^n$ (\cite[10, 11]{CH18}).

\subsection{$t$-expansion of $p$-adic modular forms}
Recall that $\widehat{\Q_p^{nr}}$ is the completion of the maximal unramified extension of $\Q_p$ and $\calW$ is its ring of integers. 
Let $\Ig(N)_{/\calW}$ be the Igusa scheme over $\calW$, and let $\boldx = [(A_0, \eta)] \in \Ig(N)(\Fbar_p)$ where $A_0$ is an elliptic curve over ${\Fbar_p}$ and $\eta: \mu_{N} \oplus \mu_{p^\infty} \hookrightarrow A_0[N] \oplus A_0[p^\infty]$ is a $\Gamma_1(Np^\infty)$-level structure. Let $\hat{S_\boldx}$ be the local deformation space of $\boldx$, which represents the functor $$R \longmapsto \{\text{deformations of $A_0$ to $R$}\}$$ for Artin local rings $R$ with residue field $\Fbar_p$. Note that $\calW$ is the ring of Witt vectors of $\Fbar_p$ and $\hat{S}_\boldx$ is a $\calW$-scheme \cite[Section 3]{Kat81}.

One has a natural embedding $\hat{S}_\boldx \hookrightarrow \Ig(N)_{/\calW}$. By \cite[Theorem 2.1]{Kat81}, there is an equivalence of functors $$\hat{S_\boldx} \simeq \Hom_{\Z_p}(T_p(A_0)(\Fbar_p) \otimes T_p(A_0^t)(\Fbar_p)), \widehat{\G}_m),$$ where $A_0^t$ is the dual of $A_0$ and $T_p(A_0)$, $T_p(A_0^t)$ are, respectively, the Tate modules of $A_0$ and $A_0^t$. 

We denote by $q_{\calA}$ the pairing corresponding to the isomorphism class $[\calA_{/R}]$. As remarked in \cite[Section 3.1]{CH18}, $\eta_{p}$ determines a point $P_\boldx \in T_p(A_0^t)$ via the Weil pairing, which gives the canonical Serre-Tate coordinates $t: \hat{S_\boldx} \rightarrow \hat{\G}_m$ as $$t([\calA_{/R}]) = q_{\calA}(\lambda^{-1}_{\text{can}}(P_\boldx), P_\boldx),$$ together with an identification $\calO_{\hat{S}_\boldx} \simeq \calW \llbracket t - 1 \rrbracket$.
For a $p$-adic modular form $f \in V(N, \calW)$, we will denote $f(t) := f |_{\hat{S}_\boldx} \in \calW\llbracket t - 1 \rrbracket$.

Following \cite[Sections 3.3, 3.5]{Hid93}, we denote by $\text{Meas}(\Z_p; \calW)$ the space of $p$-adic measures with values in $\calW$. Recall the isomorphism 
$$\text{Meas}(\Z_p; \calW) \xrightarrow{\simeq} \calW \llbracket t - 1 \rrbracket$$
given by 
$$\varphi \mapsto \Phi_\varphi(t) = \sum_{n = 0}^\infty \left(\int_{\Z_p} {x \choose n} d \varphi(x) \right) (t - 1)^n = \int_{\Z_p} t^x d \varphi(x),$$
and let $df \in \text{Meas}(\Z_p; \calW)$ be the measure corresponding to $f$ under this isomorphism. 

Following the notation of \cite[p. 8]{CH18}, for a continuous function $\phi: \Z_p \rightarrow \calO_{\C_p}$, we define $(f \otimes \phi)(t) \in \calO_{\C_p} \llbracket t - 1 \rrbracket$ by
$$(f \otimes \phi)(t) = \int_{\Z_p} \phi(x)t^x df = \sum_{n \geq 0} \int_{\Z_p} \phi(x) {x \choose n} df(x) \cdot (t - 1)^n.$$

For a classical newform $f$ of weight $2r$ in $S_{2r}^\new(\Gamma_0(N))$, its Fourier coefficients $\{a_n(f)\}_{n > 0}$ lie in a $p$-adic field $F$. We may enlarge $\calW$ to be the ring of integers of the compositum $\widehat{\Q^{nr}_p} \cdot F$, so that both $\widehat{f}$ and $ \widehat{f^\flat}$ are $p$-adic modular forms over $\calW$. Note that $\calW$ is still a complete discrete valuation ring with residue field $\Fbar_p$ and define the $t$-expansions $\widehat{f}(t), \widehat{f^\flat}(t) \in \calW\llbracket t - 1 \rrbracket$ as above.

\subsection{Hecke characters}
A Hecke character $\chi: \A_K^\times /K^\times \rightarrow \C^\times$ is said to be of infinity type $(m, n)$ if $\chi(z_\infty) = z_\infty^m \overline{z}_\infty^n$. If $\chi$ has conductor $\frakc$, we will identify $\chi$ as a character on the ideal class group of conductor $\frakc$ via $\psi(\fraka) = \psi(a)$ where $a \in \A_K$ such that $a \widehat{\calO}_K \cap K = \fraka$, and $a_\frakq = 1$ for $\frakq \mid \frakc$. We write $\chi_\frakq$ for the $\frakq$-component of $\chi$. 

Moreover, we call $\chi$ an anticyclotomic Hecke character if $\chi$ is trivial on $\A_\Q^\times.$ For such a Hecke character $\chi$, the $p$-adic avatar $\widehat{\chi}: \widehat{K}^\times/K^\times \rightarrow \C_p^\times$ is defined by $\widehat{\chi}(a) = \iota_p \circ \iota_{\infty}^{-1}(\chi(a)) a_{\frakp}^{-m} a_{\bar{\frakp}}^{-n}.$ We also call a $p$-adic character $\rho: \widehat{K}^\times/K^\times \rightarrow \C_p^\times$ locally algebraic if $\rho = \widehat{\chi}$ for some complex Hecke character $\chi$, and define the conductor of $\rho$ to be the conductor of $\chi$. 

For every locally algebraic character $\rho: \widetilde{\Gamma} \rightarrow \calO_{\C_p}^\times,$ we denote by $\rho_\frakp$ the character $\rho_\frakp: \Q_p^\times \rightarrow \C_p^\times$ defined by $\rho_\frakp(\beta) = \rho(\rec_\frakp(\beta))$. For a general continuous function $\rho \in \calC(\widetilde{\Gamma}, \calO_{\C_p})$, we also define $\rho|[\fraka]: \Z_p^\times \rightarrow \calO_{\C_p}$ as $\rho|[\fraka](x) = \rho(\rec_\frakp(x) \rec_K(a)).$ Denote by $\frakX_{p^\infty} \subset \calC(\widetilde{\Gamma}, \calO_{\C_p})$ the set of locally algebraic $p$-adic characters $\widetilde{\Gamma} \rightarrow \calO^{\times}_{\C_p}$.

Finally, for a continuous local character $\phi: \Z_q^\times \rightarrow \C^\times$ that necessarily factors through $(\Z_q/q^n\Z_q)^\times$ for some $n$, we define its Gauss sum to be $\frakg(\phi) = \sum_{u \in (\Z/q^n \Z)^\times} \phi(u) \zeta^{u}, $ where $\zeta = e^{2\pi i/q^n}.$

\subsection{Anticyclotomic $p$-adic $L$-function}
For a positive integer $c = c_0 p^n$ where $\gcd(c_0, p) = 1$, let $\fraka$ be a fractional ideal of $\calO_c = \Z + c \calO_K$ and $[(A_\fraka, \eta_\fraka)] \in \Ig(N)(K[c])$ be the corresponding CM point on the Igusa scheme discussed in Section \ref{sec:CM}. Let $\widetilde{\Gamma}_K := \Gal(K[p^\infty]/K)$ be the Galois group of the compositum of ring class fields of $K$ with $p^{th}$-power conductor over $K$.

Following \cite[p.12]{CH18}, let $\fraka \subset \calO_{c_0}$ be a fractional ideal prime with $Np$ and let $t_\fraka$ be the canonical Serre-Tate coordinate of $\widehat{f^\flat}$ around the reduction $\boldx_\fraka = [(A_\fraka, \eta_\fraka)] \otimes_{\calW} \Fbar_p$ of $[(A_\fraka, \eta_\fraka)] \in \Ig(N)(K[c_0])$. Finally, set
$$\widehat{f_\fraka^\flat}(t_\fraka) := \widehat{f^\flat}(t_\fraka^{N(\fraka)^{-1} \sqrt{-D_K}^{-1}}) \in \calW \llbracket t_\fraka - 1 \rrbracket,$$
where $N(\fraka) = c^{-1} \# (\calO_{c_0}/\fraka)$ (\cite[Section 3.2]{CH18}).

\begin{defn} \label{defn:L-func} \cite[Definition 3.7]{CH18}  Let $c_0 \geq 1$ be a positive integer such that $(c_0, pN) = 1$ and let $\psi$ be an anticyclotomic Hecke character of infinity type $(r,-r)$ of conductor $c_0 \calO_K$. Define  $\mathscr{L}_{\mathfrak{p}, \psi}(f)$ on $\widetilde{\Gamma}$ to be the $p$-adic measure on $\widetilde{\Gamma}$ given by
	\[\scrL_{\frakp, \psi}(f)(\rho)=\sum_{[\fraka] \in \Pic{\calO_{c_o}}} \psi(\fraka) \mathrm{N}(\fraka)^{-r} \cdot\int_{\Z_p^\times} \psi_\frakp \rho|[\fraka]  d{\widehat{f_\fraka^\flat}} 
	\]
	for every continuous function $\rho: \widetilde{\Gamma} \rightarrow \calO_{\C_p}.$
	We can also view $\mathscr{L}_{\mathfrak{p}, \psi}(f)$ as an element in the semi-local ring $\mathcal{W} \llbracket\widetilde{\Gamma}\rrbracket$. It is known that $\scrL_{\frakp, \psi}(f) \neq 0$ \cite[Theorem 3.9]{CH18}.
\end{defn}

For a character $\rho: \widetilde{\Gamma} \rightarrow \calO_{\C_p}^\times$, we define the map $\Tw_\rho: \calW \llbracket \widetilde{\Gamma} \rrbracket \rightarrow \calW \llbracket \widetilde{\Gamma} \rrbracket$ given by $\sigma \mapsto \rho(\sigma)\sigma$ for $\sigma \in \widetilde{\Gamma}$. We will denote $\scrL_{\frakp}(f) := \Tw_{\widehat{\psi}^{-1}}(\scrL_{\frakp, \psi}(f))$, which takes the value
\[
\scrL_{\frakp}(f)(\rho)=\sum_{[\fraka] \in \Pic{\calO_{c_0}}} \mathrm{N}(\fraka)^{-r} \cdot \int_{\Z_p^\times} \rho|[\fraka](x) x^{-r} d{\widehat{f_\fraka^\flat}}
\]
for every continuous function $\rho: \widetilde{\Gamma} \rightarrow \calO_{\C_p}$ (see also \cite[Definition 4.2]{BCK21}). For simplicity, we may assume that $c_0 = 1$ and $\Pic\calO_{c_0} = \Pic(\calO_K)$. 

\subsection{The $\theta$ operator} Let $\theta$ be the operator $t\frac{d}{dt}$ on $\calW \llbracket t - 1 \rrbracket$ and for $k < 0$ define
\[\theta^{k} := \lim_{m \rightarrow \infty} \theta^{k + (p - 1) p^{m}}.\]
To see that this is well-defined, see \cite[Section 4.5]{Bro15}. For $k > 0$ and $f(t) \in \calW \llbracket t - 1 \rrbracket$, it is well known (for example, via \cite[3.5(5)]{Hid93}) that
\[\theta^{k} f(t) = \int_{\Z_p^\times} t^{x} x^k df,\]
and the same identity also holds for $k < 0$. Thus we may re-write the definition of $\scrL_\frakp(f)$ as
\begin{align*}
	\scrL_{\frakp}(f)(\rho)= & \sum_{[\fraka] \in \Pic{\calO_{K}}} \mathrm{N}(\fraka)^{-r} 
	\cdot (\theta^{-r} \widehat{f}_\fraka^\flat \otimes \rho|[\fraka])(A_\fraka, \eta_\fraka) \\
	= & (\sqrt{-D_K})^{r} \sum_{[\fraka] \in \Pic \calO_K} ((\theta^{-r} \widehat{f}^{\flat})_\fraka \otimes \rho|[\fraka])(A_\fraka, \eta_\fraka) 
\end{align*}
for any continuous function $\rho: \widetilde{\Gamma} \rightarrow \calO_{\C_p}$.

\section{Congruent modular forms} \label{sec:cong}

Let $f_1\in S_{2r_1}^\new(\Gamma_0(N_1)), f_2 \in S_{2r_2}(\Gamma_0(N_2))^{\new}$ be normalized Hecke eigenforms of weight $2r_1$, $2r_2$ and levels $N_1, N_2$, respectively. Suppose that both $f_1$ and $f_2$ satisfy Hypothesis \eqref{Hyp}. Then there exist ideals $\frakN_1$, $\frakN_2$ in $\calO_K$ such that $N_1 \calO_K = \frakN_1 \overline{\frakN_1}$ and $N_2\calO_K = \frakN_2 \overline{\frakN_2}$. Further assume that for every $\ell \mid \gcd(N_1, N_2)$, one has $\gcd(\ell, \frakN_1) = \gcd(\ell, \frakN_2)$ so that $\calO_K/\lcm(\frakN_1, \frakN_2) \simeq \Z/\lcm(N_1, N_2)\Z$. 


We first show that $\scrL_\frakp(f_1)$ and $\scrL_\frakp(f_2)$ are congruent when their $q$-expansions are congruent. 

\begin{lemma} \label{lem:cong}
	Suppose that $f_1\in S_{2r_1}(\Gamma_0(N_1))^{\new}$, $f_2 \in S_{2r_2}(\Gamma_0(N_2))^{\new}$ have the same level $N_1 = N_2$. Let $F$ be a $p$-adic field containing $\Q(\{a_n(f_1), a_n(f_2)\}_{n > 0})$ and let $\varpi$ be a uniformizer of $\calO_{F}$. Suppose that $a_n(f_1) \equiv a_n(f_2) \pmod{\varpi^{m} \calO_{F}}$ for every $n$. Then there are congruences $f_1^{\flat} \equiv f_2^{\flat} \pmod{\varpi^{m} \calO_{F}}$, $\widehat{f_1^\flat} \equiv \widehat{f_2^\flat} \pmod{\varpi^{m} \calO_{F}}$ between $p$-adic modular forms, and \[\scrL_{\frakp}(f_1) \equiv \scrL_{\frakp}(f_2) \pmod{\varpi^m \calW \llbracket \Gamma_K^{-} \rrbracket}.\]
\end{lemma}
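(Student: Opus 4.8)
The plan is to trace a $q$-expansion congruence through each of the three constructions in turn: $p$-depletion, the $p$-adic avatar, and the $t$-expansion/measure-theoretic integral defining $\scrL_{\frakp}$. First I would observe that $p$-depletion is defined purely on $q$-expansions by $f^{\flat}(q) = \sum_{p \nmid n} a_n(f) q^n$, so the hypothesis $a_n(f_1) \equiv a_n(f_2) \pmod{\varpi^m \calO_{L_p}}$ immediately yields $f_1^{\flat} \equiv f_2^{\flat} \pmod{\varpi^m}$ as $q$-expansions. Since a geometric (or $p$-adic) modular form of a fixed weight and level is determined by its $q$-expansion at the Tate curve with canonical level structure (the $q$-expansion principle over the $p$-adically complete ring $\calO_{L_p}$, as in \cite{Kat73}), the congruence of $q$-expansions upgrades to a congruence of the $p$-adic modular forms themselves, and hence of the $p$-adic avatars: $\widehat{f_1^\flat} \equiv \widehat{f_2^\flat} \pmod{\varpi^m \calO_{L_p}}$ as elements of $V_p(\Gamma_1(N), \calW)$, because passing to the avatar only changes the value by the unit-independent differential $\omega(\widehat{\eta_p})$, which depends on the triple, not on $f$.

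Next I would push this through the Serre-Tate/$t$-expansion machinery. Fix the CM point $\boldx_\fraka = [(A_\fraka, \eta_\fraka)] \otimes_\calW \Fbar_p$ and recall the identification $\calO_{\hat{S}_{\boldx_\fraka}} \simeq \calW \llbracket t_\fraka - 1 \rrbracket$. Restriction to the deformation space is a $\calW$-linear (indeed continuous) ring map, so $\widehat{f_1^\flat} \equiv \widehat{f_2^\flat} \pmod{\varpi^m}$ gives $\widehat{f_1^\flat}(t_\fraka) \equiv \widehat{f_2^\flat}(t_\fraka) \pmod{\varpi^m \calW \llbracket t_\fraka - 1 \rrbracket}$. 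The rescaling $t_\fraka \mapsto t_\fraka^{N(\fraka)^{-1}\sqrt{-D_K}^{-1}}$ used to define $\widehat{f^\flat_\fraka}(t_\fraka)$ is a substitution of one topological generator of $\widehat{\G}_m$ for another (note $N(\fraka)^{-1}\sqrt{-D_K}^{-1} \in \Z_p^\times$ since $\fraka$ is prime to $p$ and $p$ splits in $K$), hence a continuous $\calW$-algebra automorphism of $\calW\llbracket t_\fraka - 1\rrbracket$ that preserves the ideal $\varpi^m \calW\llbracket t_\fraka - 1 \rrbracket$; thus $\widehat{f_{1,\fraka}^\flat} \equiv \widehat{f_{2,\fraka}^\flat} \pmod{\varpi^m}$ for each $\fraka$. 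Correspondingly, under the isomorphism $\mathrm{Meas}(\Z_p;\calW) \xrightarrow{\sim} \calW\llbracket t - 1\rrbracket$ the associated measures satisfy $d\widehat{f_{1,\fraka}^\flat} \equiv d\widehat{f_{2,\fraka}^\flat} \pmod{\varpi^m}$, meaning they agree modulo $\varpi^m$ on every continuous $\calO_{\C_p}$-valued test function.

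Finally I would assemble the $L$-function. Using the form of $\scrL_{\frakp}(f)$ from Definition \ref{defn:L-func} (after the twist by $\widehat\psi^{-1}$), for any continuous $\rho: \widetilde{\Gamma} \to \calO_{\C_p}$,
\[
\scrL_{\frakp}(f_i)(\rho) = \sum_{[\fraka] \in \Pic \calO_K} \mathrm{N}(\fraka)^{-r}\int_{\Z_p^\times} \rho|[\fraka](x)\, x^{-r}\, d\widehat{f_{i,\fraka}^\flat}.
\]
Since the sum is finite over $\Pic\calO_K$, the weight factors $\mathrm{N}(\fraka)^{-r}$ and the integrand $\rho|[\fraka](x)x^{-r}$ are the \emph{same} for $f_1$ and $f_2$ (they depend only on $K$, $r$, $\rho$, not on $f$), and each pair of measures agrees modulo $\varpi^m$ on the continuous test function $\rho|[\fraka](x)x^{-r}$, the two sums agree modulo $\varpi^m \calO_{\C_p}$ for all $\rho$. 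This is exactly the statement $\scrL_{\frakp}(f_1) \equiv \scrL_{\frakp}(f_2) \pmod{\varpi^m \calW\llbracket \widetilde\Gamma\rrbracket}$, and pushing forward along the quotient $\widetilde{\Gamma} \twoheadrightarrow \Gamma_K^{-}$ gives the claimed congruence in $\calW\llbracket \Gamma_K^{-}\rrbracket$. The one genuinely delicate point — and the step I would be most careful about — is the passage from $q$-expansion congruence to congruence of the geometric/$p$-adic modular forms and then of their restrictions to $\hat{S}_{\boldx_\fraka}$: this needs the $q$-expansion principle applied over $\calO_{L_p}/\varpi^m$ (not merely over a field) together with the fact that the canonical Serre-Tate coordinate $t_\fraka$ is compatible with the canonical level structure used to define $q$-expansions, so that reduction mod $\varpi^m$ commutes with restriction to the deformation space; everything else is formal manipulation of finite sums and continuous ring maps.
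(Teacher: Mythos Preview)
Your argument is correct only in the special case $r_1 = r_2$, but the lemma as stated allows the weights $2r_1$ and $2r_2$ to be different. You wrote the formula for $\scrL_{\frakp}(f_i)(\rho)$ with a single exponent $r$ and asserted that the factors $\mathrm{N}(\fraka)^{-r}$ and $x^{-r}$ ``depend only on $K$, $r$, $\rho$, not on $f$''; but in fact the exponent is $r_i$, so these factors are a priori different for $f_1$ and $f_2$, and the measure-theoretic argument does not go through as written.

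The paper handles this as follows. From the congruence $a_n(f_1) \equiv a_n(f_2) \pmod{\varpi^m}$ one obtains $\det \rho_{f_1} \equiv \det \rho_{f_2}$, i.e.\ $\chi_{\cyc}^{2r_1-1} \equiv \chi_{\cyc}^{2r_2-1} \pmod{1 + p^{m'}\Z_p}$ where $p^{m'}\Z_p = \varpi^m\calO_{L_p} \cap \Z_p$. This forces $2r_1 \equiv 2r_2 \pmod{\varphi(p^{m'})}$, so either $r_1 \equiv r_2 \pmod{\varphi(p^{m'})}$ (in which case $n^{-r_1} \equiv n^{-r_2}$ for all $n \in \Z_p^\times$ and your argument finishes), or else $n^{-r_1} \equiv \left(\tfrac{n}{p}\right) n^{-r_2}$ with the Legendre symbol appearing. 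In the second case one finds $\scrL_{\frakp}(f_1) \equiv \Tw_{\psi}\,\scrL_{\frakp}(f_2) \pmod{\varpi^m \calW\llbracket\widetilde{\Gamma}\rrbracket}$ for a quadratic anticyclotomic Hecke character $\psi$ of conductor $p$; since $\psi$ has order $2$ and $p$ is odd, $\psi$ restricts trivially to the pro-$p$ quotient $\Gamma_K^{-}$, and the desired congruence over $\calW\llbracket\Gamma_K^{-}\rrbracket$ follows. This quadratic-twist step is the missing idea in your proposal.
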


\begin{proof}
	The congruences between $p$-adic modular forms follow from the $q$-expansion principle \cite[Corollary 3.5]{Hid04}. We show that $\scrL_\frakp (f_1) (\rho) \equiv \scrL_\frakp (f_2)(\rho) \pmod{\varpi^m \calO_{\C_p}} $ for every continuous map $\rho: \widetilde{\Gamma} \rightarrow \calO_{\C_p}^\times$, and the congruence $\scrL_\frakp(f_1) \equiv \scrL_\frakp(f_2) \pmod{\varpi^m \calW \llbracket \Gamma_K^{-} \rrbracket}$ follows by the same argument as \cite[Theorem (1.10)]{Vat99}. Let $\chi_\cyc: G_\Q \rightarrow \Z_p^\times \subset \calO_{F}^\times$ be the cyclotomic character and let $\rho_{f_i}$ be the Weil-Deligne representation attached to $f_i$ for $i \in \{1, 2\}$. Since $\det(\rho_{f_i}) = \chi_\cyc^{2r_i - 1}$, we have the congruence
	
	\[\chi_\cyc^{2r_1 - 1} \equiv \chi_\cyc^{2r_2 - 1} \pmod{1 + \varpi^m \calO_{F}}. \]
	Suppose that $\varpi^m \calO_{F} \cap \Z_p = p^{m'} \Z_p$, then the congruence above actually holds in $(\Z_p/p^{m'} \Z_p)^\times \subset (\calO_{F}/\varpi^m \calO_{F})^\times:$
	\[\chi_\cyc^{2r_1 - 1} \equiv \chi_\cyc^{2r_2 - 1} \pmod{1 + p^{m'} \Z_p}.\]
	Hence, we have the congruence $2r_1 \equiv 2r_2 \pmod{\varphi(p^{m'})}$.
	
	Given a continuous function $\rho: \widetilde{\Gamma} \rightarrow \calO_{\C_p}$ , we may write
	\[
	\scrL_{\frakp}(f_1)(\rho) = \sum_{[\fraka] \in \Pic{\calO_{K}}} \mathrm{N}(\fraka)^{-r_1} 
	\cdot (\theta^{-r_1} \widehat{f}_{1, \fraka}^\flat \otimes \rho|[\fraka])(A_\fraka, \eta_\fraka)
	\]
	\[
	\scrL_{\frakp}(f_2)(\rho)=  \sum_{[\fraka] \in \Pic{\calO_{K}}} \mathrm{N}(\fraka)^{-r_2} 
	\cdot (\theta^{-r_2} \widehat{f}_{2, \fraka}^\flat \otimes \rho|[\fraka])(A_\fraka, \eta_\fraka)
	\]
	
	If $r_1 \equiv r_2 \pmod{\phi(p^{m'})}$, then  $n^{r_1} \equiv n^{r_2} \pmod{p^{m'}}$ for every $n \in \Z_p^\times$ and the result follows immediately. Otherwise, $n^{r_1} \equiv (\frac{n}{p}) n^{r_2} \pmod{p^{m'}}$ where $(\frac{\cdot}{p})$ is the Legendre symbol on $\F_p^\times$ defined as $(\frac{x}{p}) = x^{(p-1)/2}.$ With a slight abuse of notation, we will also denote by $(\frac{\cdot}{p})$ the lift of the Legendre symbol to $\Z_p^\times$. Since 
	$(\frac{\cdot}{p}) \otimes t^m = (\frac{m}{p}) t^m$ \cite[85]{Hid93} and $n^{r_1} \equiv (\frac{n}{p}) n^{r_2} \pmod{p^{m'}}$, we have the congruence
	\[\theta^{-r_1} \widehat{f}_{1, \fraka}^{\flat}(t) \equiv \left(\frac{\cdot}{p}\right) \otimes \theta^{-r_2} \widehat{f}_{2, \fraka}^{\flat}(t) \pmod{\varpi^m \calW\llbracket t - 1 \rrbracket}.\]
	Moreover, one also has $N(\fraka)^{-r_1} \equiv \left(\frac{N(\fraka)}{p}\right) N(\fraka)^{-r_2}$, from which it follows that
	\begin{equation}
		\label{eq:cong}
		\scrL_{\frakp}(f_1)(\rho) \equiv \sum_{[\fraka] \in \Pic{\calO_K}} N(\fraka)^{-r_2} \left(\frac{N(\fraka)}{p}\right) \rho(\fraka)  \left(\theta^{-r_2} \widehat{f_2^\flat} \otimes \left(\frac{\cdot}{p} \right) \rho_\frakp \right) (A_\fraka, \eta_\fraka) \pmod{\varpi^m \calW}.
	\end{equation}
	Define $\psi$ as the Hecke character such that $\psi(\fraka) = \left(\frac{N(\fraka)}{p}\right)$ for prime-to-$p$ fractional ideals $\fraka$ of $K$.
	Then $\psi$ is an anticyclotomic Hecke character of order $2$ and conductor $p$, and $\psi_\frakp: \calO_{K, \frakp}^{\times} \rightarrow \{\pm 1\}$ is the Legendre symbol $\left(\frac{\cdot}{p}\right)$. We may now rewrite the congruence \eqref{eq:cong} as
	\[
	\scrL_{\frakp}(f_1)(\rho) \equiv \sum_{[\fraka] \in \Pic{\calO_K}} N(\fraka)^{-r_2} (\psi \phi)(\fraka) (\theta^{-r_2} \widehat{f_2^\flat} \otimes \psi_\frakp \rho_\frakp)(A_\fraka, \eta_\fraka) \pmod{\varpi^m \calW}.
	\]
	In other words,
	\[\scrL_\frakp(f_1) \equiv \Tw_{\psi} \scrL_\frakp(f_2) \pmod{\varpi^m \calW \llbracket \widetilde{\Gamma} \rrbracket}.\]
	Since $\psi$ is a Hecke character of order $2$ and $p$ is odd, the restriction of $\psi$ to the anticyclotomic $\Z_p$-extension $\Gamma_K^{-}$ is trivial. Hence, one has the congruence
	\[\scrL_{\frakp}(f_1) \equiv \scrL_\frakp(f_2) \pmod{\varpi^m \calW \llbracket \Gamma_K^- \rrbracket}.\]
\end{proof}

\subsection{Hecke operators at $p$ in Serre-Tate coordinates} \label{sec:T_p}

Recall some Hecke operators in terms of the complex uniformization of Igusa schemes. Let $\fraka$ be a fractional ideal of $\calO_K$ and let $x_\fraka = [(A_\fraka, \eta_\fraka)] = [\vartheta, \rho(\overline{a})^{-1}\zeta_c]$ (see Section \ref{sec:CM}). For $z \in \Q_p$, we define $\frakn(z) := \begin{pmatrix}
	1 & z \\ 0 & 1
\end{pmatrix} \in \GL_2(\Q_p) \subset \GL_2(\widehat{\Q})$ and let $x_\fraka \ast \frakn(z) := [\vartheta, \rho(\overline{a})^{-1}\zeta_c \frakn(z)]$ under the action of $\GL_2(\widehat{\Q})$ on $\Ig(N)(\C)$. 

By \cite[Proposition 3.3]{CH18}, for a primitive Dirichlet character $\phi: (\Z/p^n \Z)^\times \rightarrow \calO_{\C_p}^\times$, the integral in Definition \ref{defn:L-func} can be written as
\[f_\fraka \otimes  \phi(x_\fraka) = p^{-n} \frakg(\phi) \sum_{u \in (\Z/p^n\Z)^\times} \phi^{-1}(u) f(x_{\fraka} \ast \frakn(up^{-n})).\]

In \cite[Proposition 6.4]{Bra11}, the author discusses the moduli interpretations of $x_\fraka \ast \frakn(up^{-n})$ for $u \in (\Z/p^n\Z)^\times$ as quotients of $A_\fraka$ by certain rank-$p^n$ subgroup schemes of $A_\fraka [p^\infty]$. Moreover we have $x_\fraka \ast \frakn(up^{-n}) \otimes \Fbar_p = x_\fraka \otimes \Fbar_p$, and the Serre-Tate coordinate of $x_\fraka \ast \frakn(up^{-n})$ is given by \[t_\fraka(x_\fraka \ast \frakn(up^{-n})) = \zeta_{p^n}^{-u \N(\fraka)^{-1} \sqrt{-D_K}^{-1}}\] according to \cite[Lemma 3.2]{CH18}.
\subsection{Hecke operators at $\ell \neq p$ in Serre-Tate coordinates} \label{sec:V_l}


Let $f \in S_{2r}(\Gamma_0(N))^\new$ be a normalized newform of weight $2r$ and level $N$ that is an eigenform for all Hecke operators.

Let $\ell \neq p$ be a rational prime. For $\lcm(N, \ell) \mid N^\sharp$, one may naturally identify $f$ as a form of level $N^\sharp$. For an ordinary test triplet $(A, \eta_{N^\sharp}, \omega) \in \calM_{\Gamma_1(N^\sharp)}$ with level $N^\sharp$ structure $\eta_{N^{\sharp}}$, let $C \subset A[N^\sharp]$ be the image of the level structure $\eta_{N^\sharp}$. 


Let $\pi$ be the projection $A \rightarrow A/C[\ell]$. Note that the morphism \[\pi \circ \eta_{N^\sharp}: \mu_{N^\sharp} \rightarrow C/C[\ell]\] has kernel $\mu_\ell$, and denote by $\overline{\pi \circ \eta_{N^\sharp}}$ the isomorphism
\[\overline{\pi \circ \eta_{N^\sharp}}: \mu_{N^\sharp}/\mu_{\ell} \rightarrow C/C[\ell].\]
Moreover, denote by $(\cdot)^{1/\ell}$ the inverse of the isomorphism $\mu_{N^\sharp}/\mu_{\ell} \xrightarrow{\zeta \rightarrow \zeta^{\ell}} \mu_{N^\sharp \ell^{-1}}$.

\begin{defn} \label{defn:Vell}
	Define the 'dividing by $\ell$-level structure' operator $V_\ell$ on ordinary test triplets as
	\[V_\ell(A, \eta_{N^\sharp}, \omega) = (A/C[\ell], \pi \circ \eta_{N^\sharp} \circ (\cdot)^{1/\ell}, \check{\pi}^* \omega),\]
	where $\pi: A \rightarrow A/C[\ell]$ is the canonical projection and $\check{\pi}: A/C[\ell] \rightarrow A$ is its dual isogeny. Here,  $\check{\pi}^*$ is the map on differential forms induced by $\check{\pi}$.
	
	
	The map $V_\ell$ induces an operator $V_\ell^*$ on the space of classical modular forms of level $\Gamma_0(N^\sharp)$ via the rule $V_\ell^* f (A, \eta_{N^\sharp}, \omega) = f (V_\ell (A, \eta_{N^\sharp}, \omega))$, which acts on $q$-expansions as $f(q) \mapsto f(q^\ell)$ \cite[14, 15]{KL19}. 
\end{defn}

\begin{defn}
	Define the $(\ell)$-stabilization for a newform $f$ of conductor $N$ and weight $2r$ as:
	\[f^{(\ell)} = \begin{cases}
		f - a_\ell(f)V_\ell^\ast f + \ell^{2r - 1} V_\ell^\ast V_\ell^\ast f & \text{ if } \ell \nmid N, \\
		f - a_\ell(f)V_\ell^\ast & \text{ otherwise.} 
	\end{cases}\]
	where $f$ is viewed as a form of level $N^\sharp$.
\end{defn}

We now give a description of the Hecke operators above for a $p$-adic modular form $f \in V_p(N, \calW)$ of level $N$. Suppose that $N \mid N^\sharp$ and let $(A, \eta_{N^\sharp} \times \eta_p) \in \Ig(N^\sharp)$. 
There is a natural map \[\frac{N^\sharp}{N}: \Ig(N^\sharp) \rightarrow \Ig(N)\]
\[(A, \eta_{N^\sharp} \times \eta_{p}) \mapsto (A, \eta_{N} \times \eta_p),\] where $\eta_N$ is the restriction of $\eta_{N^\sharp}$ to $\mu_N$.

This induces an identification of $p$-adic modular forms of level $N$ as forms of level $N^\sharp$:
\[[N^\sharp/N]^\ast: V_p(N, \calW) \hookrightarrow V_p(N^\sharp, \calW).\]

\begin{defn} \label{defn:Vell-padic}
	For $\lcm(N, \ell) \mid N^\sharp$, define the following analogue of the $V_\ell$ operator for $p$-adic modular forms:
	\[V_\ell: \Ig(N^\sharp) \rightarrow \Ig(N^{\sharp}\ell^{-1})\]
	\[(A, \eta) \mapsto (A/C[\ell], \overline{\pi \circ \eta_{N^\sharp}} \circ (\cdot)^{1/\ell} \times \check{\pi}^{-1} \circ \eta_p)\]
	for $C[\ell] := \Ima(\eta)$, and similarly define $V_\ell^\ast f(A, \eta) = f(V_\ell(A, \eta))$ for a $p$-adic modular form $f$ of level $N^\sharp$. We also note that $\check{\pi}^{-1} \circ \eta_p = \frac{1}{\ell} \circ \pi \circ \eta_p$.
\end{defn}

For a complete local $\calW$-algebra $R$ and $[(A, \eta)_{/R}] \in \Ig(N)(R)$, recall from Section \ref{modForm} that the $\Gamma_1(Np^\infty)$-level structure $\eta = \eta_N \oplus \eta_p$ determines a map $\widehat{\eta}_p: \widehat{\G}_m \xrightarrow{\sim} \widehat{A}$ \cite[(1.10.1)]{Kat78}(see also \cite[Proposition 1]{Tat67}), which defines a differential $\omega(\widehat{\eta}_p): \Lie(A) \simeq \Lie(\widehat{A}) \rightarrow \Lie(\widehat{\G}_m) = R$. A geometric modular form $f$ can then be identified as a $p$-adic modular form via the rule $\widehat{f}(A, \eta)  = f(A, \eta, \omega(\widehat{\eta_p}))$. To show the compatibility of the $V_\ell$ operator defined on geometric modular forms and $p$-adic modular forms, we begin with the following

\begin{lemma}
	Let $\phi:A/R \longrightarrow A'/R$ be an isogeny of elliptic curves. Suppose that $\eta_p$ is a $p^\infty$-level structure on $A/R$, and $\phi \circ \eta_p$ is the $p^\infty$- level structure on $A'/R$ induced by $\phi$. Then $(\phi^\ast)^{-1} \omega(\widehat{\eta_p}) = \omega(\widehat{\phi \circ \eta_p})$, where the map $\phi^\ast: H^0(A'/R, \underline{\Omega}_{A'/R}^1) \rightarrow H^0(A/R, \underline{\Omega}_{A/R}^1)$ between differential $1$-forms is induced by $\phi$.
\end{lemma}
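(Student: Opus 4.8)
The plan is to reduce the identity to the functoriality of the construction $\eta_p \mapsto \widehat{\eta}_p$ of \cite[(1.10.1)]{Kat78}, and then to differentiate at the identity section. Recall that $\omega(\widehat{\eta}_p)$ is by definition the functional $(d\widehat{\eta}_p)^{-1}\colon \Lie(A)\cong\Lie(\widehat A)\to\Lie(\widehat{\G}_m)=R$, which we regard as the invariant differential $\omega(\widehat{\eta}_p)\in H^0(A/R,\underline{\Omega}^1_{A/R})=\Lie(A)^\vee$; under this identification, for an isogeny $\phi\colon A\to A'$ the map $\phi^\ast\colon H^0(A'/R,\underline{\Omega}^1_{A'/R})\to H^0(A/R,\underline{\Omega}^1_{A/R})$ is precomposition with $d\phi\colon\Lie(A)\to\Lie(A')$, and it agrees with $\widehat{\phi}^\ast$ since $\phi$ induces $\widehat{\phi}\colon\widehat A\to\widehat{A'}$ on formal completions at the identity. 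Observe also that, since $\phi\circ\eta_p$ is again an immersion $\mu_{p^\infty}\hookrightarrow A'[p^\infty]$, the isogeny $\phi$ is injective on $p$-power torsion, so $p\nmid\deg\phi$; as $\deg\phi$ is then a unit in the $\Z_p$-algebra $R$, multiplication by $\deg\phi$ is an automorphism of $\widehat A$ and of $\widehat{A'}$, and from $\check\phi\circ\phi=[\deg\phi]$, $\phi\circ\check\phi=[\deg\phi]$ we conclude that $\widehat{\phi}$ is an isomorphism of formal groups; in particular $d\phi$ and $\phi^\ast$ are isomorphisms.

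The heart of the argument is the identity
\[
\widehat{\phi\circ\eta_p}\;=\;\widehat{\phi}\circ\widehat{\eta}_p
\]
of isomorphisms $\widehat{\G}_m\xrightarrow{\sim}\widehat{A'}$. Both sides are isomorphisms of one-dimensional formal groups over the ($p$-adically complete local) ring $R$, and their restrictions to the $p$-power torsion subgroup scheme $\mu_{p^\infty}=\widehat{\G}_m[p^\infty]$ coincide: the restriction of the left-hand side is $\phi\circ\eta_p$ by definition of the avatar, while the restriction of the right-hand side is $\widehat{\phi}|_{\mu_{p^\infty}}\circ\widehat{\eta}_p|_{\mu_{p^\infty}}=\phi\circ\eta_p$, using that $\widehat{\eta}_p$ restricts to $\eta_p$ on $p$-power torsion (the defining compatibility) and that $\widehat{\phi}$ restricts to $\phi$ on $\widehat A[p^\infty]\subset A[p^\infty]$ (functoriality of the connected component of the identity). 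Since a homomorphism between $p$-divisible formal groups over a $p$-adically complete ring is determined by the induced map on $p$-power torsion -- this is the rigidity underlying \cite[(1.10.1)]{Kat78} (see also \cite[Proposition~1]{Tat67}) -- the two isomorphisms agree. Equivalently, the passage from a $p^\infty$-level structure to a trivialization of the formal group is a natural equivalence, hence compatible with any isogeny carrying one level structure to the other.

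Granting this, the conclusion follows by a formal computation on Lie algebras. Differentiating $\widehat{\phi\circ\eta_p}=\widehat{\phi}\circ\widehat{\eta}_p$ at the identity gives $d\widehat{\phi\circ\eta_p}=d\widehat{\phi}\circ d\widehat{\eta}_p=d\phi\circ d\widehat{\eta}_p$ as isomorphisms $R\to\Lie(A')$, and inverting yields
\[
\omega(\widehat{\phi\circ\eta_p})=(d\widehat{\phi\circ\eta_p})^{-1}=(d\widehat{\eta}_p)^{-1}\circ(d\phi)^{-1}=\omega(\widehat{\eta}_p)\circ(d\phi)^{-1}=(\phi^\ast)^{-1}\omega(\widehat{\eta}_p),
\]
the last step being the description of $\phi^\ast$ recalled in the first paragraph. (With differential forms, the same computation reads $\widehat{\eta}_p^\ast\bigl(\widehat{\phi}^\ast\omega(\widehat{\phi\circ\eta_p})\bigr)=(\widehat{\phi}\circ\widehat{\eta}_p)^\ast\omega(\widehat{\phi\circ\eta_p})=dt/t=\widehat{\eta}_p^\ast\omega(\widehat{\eta}_p)$, whence $\widehat{\phi}^\ast\omega(\widehat{\phi\circ\eta_p})=\omega(\widehat{\eta}_p)$.) The only non-formal ingredient is the rigidity statement of the second paragraph; I expect that the real work, if one wants full rigour, lies in pinning down the precise form of Katz's construction over the base $R$ occurring here (a complete local $\calW$-algebra with residue field $\Fbar_p$) and checking its naturality there -- the rest is bookkeeping with the dual isogeny and with the standard identifications of invariant differentials.
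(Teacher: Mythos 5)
Your proof is correct and takes essentially the same route as the paper: both reduce the identity to the functoriality of the Katz/Tate passage from $p^\infty$-level structures to trivializations of the formal group (\cite[Proposition 1]{Tat67}), and then differentiate; your version simply spells out the key naturality identity $\widehat{\phi\circ\eta_p}=\widehat{\phi}\circ\widehat{\eta}_p$, which the paper invokes implicitly as ``functoriality''. One small caveat in an inessential aside: from $\phi\circ\eta_p$ being an immersion you may only conclude that $\phi$ is injective on the \emph{connected} part $\widehat A[p^\infty]$ of the $p$-power torsion (the image of $\eta_p$), not on all of $A[p^\infty]$, so the step ``hence $p\nmid\deg\phi$'' does not follow --- over an ordinary base $\ker\phi$ could lie in the \'etale quotient of $A[p^\infty]$; fortunately you never actually need $p\nmid\deg\phi$, since the fact that $\widehat{\phi}$ is an isomorphism already follows from $\widehat{\phi}=\widehat{\phi\circ\eta_p}\circ\widehat{\eta}_p^{-1}$, both factors being isomorphisms.
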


\begin{proof}
	Throughout this proof, we use the equivalence between the category of divisible commutative Lie groups and the category of connected $p$-divisible groups	 \cite[Proposition 1]{Tat67}.
	
	Let $\phi: A/R \longrightarrow A'/R$ be an isogeny. Then there are induced maps
	\[\widehat{\G}_m \xrightarrow{\widehat{\eta_p}} \widehat{A} \xrightarrow{\widehat{\phi}} \widehat{A}', \]
	\[\Lie(\widehat{\G}_m) \xrightarrow{\Lie(\widehat{\eta}_p)} \Lie(\widehat{A}) \xrightarrow{\Lie(\widehat{\phi})} \Lie(\widehat{A}').\]
	
	Recall that $\omega(\widehat{\eta_p})$ (respectively $\omega(\widehat{\phi \circ \eta_p})$) is defined as the inverse of $\Lie(\widehat{\eta}_p)$ (respectively $\Lie(\widehat{\phi \circ \eta_p})$): 
	\[\omega(\widehat{\eta}_p): \Lie(A) \simeq \Lie(\widehat{A}) \xrightarrow{\Lie(\widehat{\eta_p})^{-1}} \Lie(\widehat{\G}_m) = R, \]
	\[\omega(\widehat{\phi \circ \eta_p}): \Lie(A') \simeq \Lie(\widehat{A'}) \xrightarrow{\Lie(\widehat{\phi \circ \eta_p})^{-1}} \Lie(\widehat{\G}_m) = R.\]	
	
	Hence, we have $\Lie(\widehat{\phi})^\ast \omega(\widehat{\phi \circ \eta_p}) = \omega(\widehat{\eta}_p)$ by functoriality, where $\Lie(\widehat{\phi})^\ast$ is the pull-back map induced by $\Lie(\widehat{\phi})$. Moreover, the map $\Lie(\widehat{\phi})^\ast$ is the same as $\phi^\ast: H^0(A'/R, \underline{\Omega}_{A'/R}^1) \rightarrow H^0(A/R, \underline{\Omega}_{A/R}^1)$ by definition, and we have $\phi^\ast \omega(\widehat{\phi \circ \eta_p}) = \omega(\widehat{\eta}_p).$
\end{proof}

\begin{lemma}
	Let $\widehat{f} \in V_p(N, \calW)$ be the $p$-adic avatar of a geometric modular form $f$. Then \[V_\ell^\ast \widehat{f} = \widehat{V_\ell^\ast f},\]
	where the $V_\ell^{\ast}$ operator on the left-hand side acts on $p$-adic modular forms (Definition \ref{defn:Vell-padic}) and the $V_\ell^{\ast}$ operator on the right-hand side acts on geometric modular forms (Definition \ref{defn:Vell}).
\end{lemma}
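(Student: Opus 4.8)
The plan is to unwind both $V_\ell^\ast$ operators on an arbitrary test triple and reduce the statement to the differential identity just proved. After identifying $f$ (and hence $\widehat f$) with its image of level $N^\sharp$ — an operation that commutes with taking $p$-adic avatars, since it only restricts the prime-to-$p$ level structure and leaves $\eta_p$ and $\omega$ untouched — it suffices to treat $V_\ell$ itself. So I would fix a complete local $\calW$-algebra $R$ and a test point $[(A,\eta)_{/R}] \in \Ig(N^\sharp)(R)$ with $\eta = \eta_{N^\sharp}\oplus\eta_p$; set $C := \Ima(\eta_{N^\sharp})$, let $\pi\colon A\to B := A/C[\ell]$ be the canonical projection and $\check\pi\colon B\to A$ its dual isogeny, so $\deg\pi = \#C[\ell] = \ell$ and $\check\pi\circ\pi = [\ell]_A$, $\pi\circ\check\pi = [\ell]_B$. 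Since $\ell\neq p$, the kernel $C[\ell]\cong\mu_\ell$ is \'etale over $R$, so $\pi$ and $\check\pi$ are separable and induce isomorphisms on Lie algebras, hence on invariant differentials. Unwinding the definition of the $p$-adic avatar, $V_\ell^\ast\widehat f(A,\eta) = \widehat f(V_\ell(A,\eta)) = f\bigl(B,\ \overline{\pi\circ\eta_{N^\sharp}}\circ(\cdot)^{1/\ell},\ \omega(\widehat{\check\pi^{-1}\circ\eta_p})\bigr)$, whereas, since $V_\ell^\ast$ on geometric forms is $f\mapsto f\circ V_\ell$, $\widehat{V_\ell^\ast f}(A,\eta) = (V_\ell^\ast f)(A,\eta,\omega(\widehat{\eta_p})) = f\bigl(B,\ \pi\circ\eta_{N^\sharp}\circ(\cdot)^{1/\ell},\ \check\pi^\ast\omega(\widehat{\eta_p})\bigr)$. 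The two prime-to-$p$ level structures coincide, since $(\cdot)^{1/\ell}$ has image in $\mu_{N^\sharp}/\mu_\ell$ and on that quotient $\overline{\pi\circ\eta_{N^\sharp}}$ is exactly the map induced by $\pi\circ\eta_{N^\sharp}$. So the whole statement reduces to the single identity $\omega(\widehat{\check\pi^{-1}\circ\eta_p}) = \check\pi^\ast\,\omega(\widehat{\eta_p})$ of invariant differentials on $B$.

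For that identity I would invoke the preceding lemma with $\phi = \check\pi$. The key point is that $\check\pi^{-1}\circ\eta_p$ is by definition $\tfrac{1}{\ell}\circ\pi\circ\eta_p$, which is legitimate because multiplication by $\ell$ is invertible on $A[p^\infty]$; consequently $\check\pi\circ(\check\pi^{-1}\circ\eta_p) = \tfrac{1}{\ell}(\check\pi\circ\pi)\circ\eta_p = \tfrac{1}{\ell}[\ell]\circ\eta_p = \eta_p$. Thus $\eta_p$ is precisely the $p^\infty$-level structure on $A$ induced from $\check\pi^{-1}\circ\eta_p$ on $B$ along $\check\pi\colon B\to A$, and the preceding lemma yields $(\check\pi^\ast)^{-1}\,\omega(\widehat{\check\pi^{-1}\circ\eta_p}) = \omega(\widehat{\check\pi\circ(\check\pi^{-1}\circ\eta_p)}) = \omega(\widehat{\eta_p})$, which rearranges — using that $\check\pi^\ast$ is invertible — to the required identity.

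Putting the pieces together, the two triples fed to $f$ in the two displays become literally equal, so $V_\ell^\ast\widehat f$ and $\widehat{V_\ell^\ast f}$ take the same value at every test point; since a $p$-adic modular form is exactly such an assignment of values (equivalently, $V_\ell$ is a morphism of Igusa schemes and $V_\ell^\ast$ is honest pullback), this gives $V_\ell^\ast\widehat f = \widehat{V_\ell^\ast f}$. I expect the only real obstacle to be careful bookkeeping with the dual isogeny: one must keep straight that it is $\check\pi$, not $\pi$, that enters both the geometric and the $p$-adic definition of $V_\ell$; that $\check\pi^{-1}\circ\eta_p$ abbreviates $\tfrac{1}{\ell}\pi\circ\eta_p$; and that $\check\pi\circ\check\pi^{-1}\circ\eta_p = \eta_p$ holds on $p$-divisible groups — all of which depend on the hypothesis $\ell\neq p$ baked into the construction.
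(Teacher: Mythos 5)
Your argument is correct and is essentially the paper's own: evaluate both sides on a test point $(A,\eta)$, note that the resulting triples agree in the abelian variety and the prime-to-$p$ level structure, and reduce to the identity $\omega(\widehat{\check\pi^{-1}\circ\eta_p}) = \check\pi^\ast\omega(\widehat{\eta_p})$, which is exactly the preceding lemma applied with $\phi=\check\pi$ to the level structure $\check\pi^{-1}\circ\eta_p$ on $A/C[\ell]$. The paper states this quite tersely; you simply spell out the bookkeeping (the étaleness of $C[\ell]$, the identity $\check\pi\circ(\check\pi^{-1}\circ\eta_p)=\eta_p$, and the invertibility of $\check\pi^\ast$) that makes the reduction legitimate.
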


\begin{proof}
	This follows directly from $\check{\pi}^\ast \omega(\widehat{\eta_p}) = \omega(\widehat{\check{\pi}^{-1} \circ \eta_p})$ by the previous lemma, and the definitions
	\[\widehat{V_\ell^\ast f}(A, \eta) = f(A/C[\ell], \overline{\pi \circ \eta_{N^\sharp}} \circ (\cdot)^{1/\ell}, \check{\pi}^\ast \omega(\eta_p)),\]
	\[
	V_\ell^\ast \widehat{f}(A, \eta) = f(A/C[\ell], \overline{\pi \circ \eta_{N^\sharp}} \circ (\cdot)^{1/\ell}, \omega(\check{\pi}^{-1} \circ \eta_p)).\] \end{proof}


Let $\ell$ be a prime that splits in $\calO_K$ as $\ell = v \vbar$, and let $N^\sharp$ be such that $\lcm(N, \ell^2) \mid N^\sharp$. For every fractional ideal $\fraka$ of $\calO_K$ and every level $M$ divisible by $N$, let $x_{\fraka} = (A_{\fraka}, \eta_\fraka) \in \Ig(M)$ be a CM point satisfying $\Ima(\eta_\fraka)[\ell^\infty] = \Ima(\eta_\fraka)[\ell^\infty] \cap A[v^\infty].$ We assume that these points are compatible with the projections $\Ig(M') \rightarrow \Ig(M)$ for $M \mid M'$. It follows from definitions that the value of a $p$-adic modular form $f \in V_p(N, \calW)$ at such a CM point does not depend on the implicit level under the natural identification $V_p(N, \calW) \hookrightarrow V_p(M, \calW)$ for any $M$ divisible by $N$. 

\begin{lemma} \label{lem:Vell-1}
	Let $x_{\fraka} = (A_\fraka, \eta_\fraka) \in \Ig(N^\sharp)$ be a CM point. Then $V_\ell(x_\fraka) = (A_{\vbar^{-1}\fraka}, \eta_{\vbar^{-1} \fraka}) \in \Ig(N^\sharp \ell^{-1})$. As a consequence, we have \[V_\ell^\ast f(x_\fraka) = f(x_{\vbar^{-1}\fraka})\]
	for a $p$-adic modular form $f \in V_p(N, \calW).$
	
\end{lemma}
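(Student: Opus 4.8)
The plan is to unwind the definition of the $V_\ell$-operator on $p$-adic modular forms from Section~\ref{sec:V_l} at the CM point $x_\fraka$, and then to match the resulting tuple against the moduli description of $x_{\vbar^{-1}\fraka}$ recorded in Section~\ref{sec:CM}. Once the identity $V_\ell(x_\fraka) = x_{\vbar^{-1}\fraka}$ in $\Ig(N^\sharp\ell^{-1})$ is in hand, the displayed consequence is immediate: by definition $V_\ell^\ast f(x_\fraka) = f(V_\ell(x_\fraka)) = f(x_{\vbar^{-1}\fraka})$, where one also uses the observation made just before the lemma that the value of $f \in V_p(N,\calW)$ at a CM point is independent of the implicit level.

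First I would pin down the finite subgroup scheme being quotiented. Since $C := \Ima(\eta_\fraka) \cong \mu_{N^\sharp}$, the $\ell$-torsion $C[\ell]$ has order $\ell$; on the other hand the hypothesis $\Ima(\eta_\fraka)[\ell^\infty] = \Ima(\eta_\fraka)[\ell^\infty] \cap A_\fraka[v^\infty]$ forces $C[\ell] \subseteq A_\fraka[v]$, and $A_\fraka[v]$ also has order $\Nm(v) = \ell$, so $C[\ell] = A_\fraka[v]$. Writing $A_\fraka = \C/\fraka^{-1}$ as in Section~\ref{sec:CM}, one has $A_\fraka[v] = v^{-1}\fraka^{-1}/\fraka^{-1}$, hence $A_\fraka/C[\ell] = \C/v^{-1}\fraka^{-1}$. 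Because $v\vbar = (\ell)$, multiplication by $\ell$ on $\C$ carries $v^{-1}\fraka^{-1} = \ell^{-1}\vbar\fraka^{-1}$ isomorphically onto $\vbar\fraka^{-1} = (\vbar^{-1}\fraka)^{-1}$, giving a $K$-linear isomorphism $\iota\colon A_\fraka/C[\ell] \xrightarrow{\ \sim\ } A_{\vbar^{-1}\fraka}$ under which the projection $\pi\colon A_\fraka \to A_\fraka/C[\ell]$ is identified with the degree-$\ell$ isogeny $A_\fraka \to A_{\vbar^{-1}\fraka}$ given by $z \mapsto \ell z$, whose kernel is $A_\fraka[v]$. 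This already identifies the underlying elliptic curves.

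It then remains to check that $\iota$ carries the level structure $\overline{\pi \circ \eta_{\fraka,N^\sharp}} \circ (\cdot)^{1/\ell} \oplus \check\pi^{-1}\circ \eta_{\fraka,p}$ produced by $V_\ell$ to $\eta_{\vbar^{-1}\fraka}$ at level $N^\sharp\ell^{-1}$. For the $p$-part, use $\check\pi^{-1}\circ\eta_{\fraka,p} = \tfrac1\ell \circ \pi \circ \eta_{\fraka,p}$ (valid since $\check\pi \circ \pi = [\ell]$ and $\pi$ restricts to an isomorphism on prime-to-$\ell$, hence on $p^\infty$-, torsion); since $\iota\circ\pi$ is $z \mapsto \ell z$ and $\ell$ is invertible on $p$-power torsion, composing with $\iota$ collapses the two factors of $\ell$, and one finds that $\iota\circ\check\pi^{-1}\circ\eta_{\fraka,p}$ is just $\eta_{\fraka,p}$ read through the canonical identification $A_\fraka[p^\infty] \xrightarrow{\sim} A_{\vbar^{-1}\fraka}[p^\infty]$ (available because $v \nmid p$), which is $\eta_{\vbar^{-1}\fraka,p}$ because the $p$-level structures of the CM points in the family are normalised compatibly along prime-to-$p$ isogenies. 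For the prime-to-$p$ part, the operators $\overline{(\cdot)}$ (killing $\mu_\ell$) and $(\cdot)^{1/\ell}$ (the inverse of $\mu_{N^\sharp}/\mu_\ell \xrightarrow{\sim} \mu_{N^\sharp\ell^{-1}}$) are designed precisely so that $\overline{\pi\circ\eta_{\fraka,N^\sharp}}\circ(\cdot)^{1/\ell}$ has image $C/C[\ell]$ inside $A_\fraka/C[\ell]$; transporting by $\iota$ and using the support-at-$v$ hypothesis (which, since $\ell^2 \mid N^\sharp$, pins the $\ell$-part of $C$ to the relevant $v$-power torsion) matches the $\ell$-part with that of $x_{\vbar^{-1}\fraka}$, while away from $\ell$ the map $\iota$ is a canonical isomorphism respecting the level structures.

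I expect the last paragraph to be the real work: the bookkeeping that reconciles the built-in ``division by $\ell$'' in $\check\pi^{-1}\circ\eta_p$ and the $(\cdot)^{1/\ell}$ re-indexing with the normalisation of $x_{\vbar^{-1}\fraka}$, and in particular the check that no spurious scalar and no switch of $v$ for $\vbar$ creeps in. The cleanest way to run this is through the complex/adelic uniformisation of Section~\ref{sec:CM}: writing $x_\fraka = [\vartheta, \rho(\overline a)^{-1}\zeta_c]$, the operator $V_\ell$ is realised by right translation by an explicit matrix supported at $\ell$, and the hypothesis $\Ima(\eta_\fraka)[\ell^\infty] \subseteq A_\fraka[v^\infty]$ becomes a triangularity condition on the $\ell$-component of the adelic parameter; Shimura's reciprocity law then turns the desired equality $V_\ell(x_\fraka) = x_{\vbar^{-1}\fraka}$ into a short computation with ideles.
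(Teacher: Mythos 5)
Your proposal is correct and follows essentially the same approach as the paper, hinging on the identification $C[\ell]=A_\fraka[v]$ and the multiplication-by-$\ell$ isomorphism from \cite[Lemma 3.5]{KL19}, together with the observation $\check{\pi}^{-1}\circ\eta_p = \tfrac1\ell\circ\pi\circ\eta_p$ for the $p$-part. The only organizational difference is that the paper first applies the $\vbar$-action to $V_\ell(x_\fraka)$, which recasts the quotient curve as $A_\fraka/A_\fraka[\ell]$ and the comparison isomorphism as $\iota\colon A_\fraka/A_\fraka[\ell]\to A_\fraka$, $x\mapsto[\ell]x$, so that the level-structure check becomes a single commutative diagram on $A_\fraka$ itself; your direct lattice identification $A_\fraka/C[\ell]\xrightarrow{\sim}A_{\vbar^{-1}\fraka}$ carries exactly the same content.
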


\begin{proof} For ease of notation, we will denote $A = A_\fraka$ and $\eta = \eta_\fraka$ in this proof. Denote by $\pi_v$ the projection $A \rightarrow A/A[v]$, and by $\pi_{\vbar}$ the projection $A_{v} \rightarrow A_v/A_v[\vbar]$ where $A_v = A/A[v]$. Observe that
	\begin{align*}
		\vbar \star V_\ell(x_\fraka) & = \vbar \star (A/A[v], \overline{\pi_v \circ \eta_{N^\sharp}} \circ (\cdot)^{1/\ell} \times \check{\pi}_v^{-1} \circ \eta_p) \\ & = (A/A[\ell], \pi_{\vbar} \circ \overline{\pi_v \circ \eta_{N^\sharp}} \circ (\cdot)^{1/\ell} \times \pi_{\vbar} \circ \check{\pi}_v^{-1} \circ \eta_p)).
	\end{align*}
	
	We claim that the isomorphism \[\iota: A/A[\ell] \rightarrow A\]
	\[x + A[\ell]\mapsto [\ell]x\] introduced in Lemma 3.5 of \cite{KL19} gives rise to the isomorphism between the tuples
	\[(A/A[\ell], \pi_{\vbar} \circ \overline{\pi_v \circ \eta_{N^\sharp}} \circ (\cdot)^{1/\ell} \times \pi_{\vbar} \circ \check{\pi}_v^{-1} \circ \eta_p)) \simeq (A, (\eta_{N^\sharp/\ell} \times \eta_p)),\]
	where $\eta_{N^\sharp \ell^{-1}}$ is the restriction of $\eta_{N^\sharp}$ to $\mu_{N^\sharp \ell^{-1}}.$ 
	
	Indeed, following the argument in {\it loc. cit.}, the composition $\iota \circ \pi_{\vbar} \circ \pi_v$ is the multiplication by $\ell$ map $[\ell]: A \rightarrow A$. This implies that the dual isogeny $\check{\pi}_v$ of $\pi_v$ is  $\iota \circ {{\pi}_{\vbar}}$, so that $\iota \circ \pi_{\vbar} \circ \check{\pi}_v^{-1} \circ \eta_p = \eta_p.$ 
	
	Next, we show that $\iota \circ \pi_{\vbar} \circ \overline{\pi_v \circ \eta_{N^\sharp}} \circ (\cdot)^{1/\ell} = \eta_{N^\sharp \ell^{-1}}.$ Since $\iota \circ \pi_{\vbar} \circ \pi_{v}$ is just the multiplication by $\ell$ map, the composition $\iota \circ \pi_{\vbar} \circ \pi_v \circ \eta_{N^\sharp}$ is simply
	\[\mu_{N^\sharp} \xrightarrow{\eta_{N^\sharp}} A[\frakN^\sharp] \xrightarrow{[\ell]} A[\frakN^\sharp v^{-1}].\]
	
	The following diagram commutes:
	\begin{center}
		\begin{tikzcd}
			\mu_{N^\sharp} \arrow[r, "\eta_{N^\sharp}"] \arrow[d, "(\cdot)^{\ell}"] & A[\frakN^{\sharp}] \arrow[d, "\ell"] \\
			\mu_{N^\sharp \ell^{-1}} \arrow[r, "\eta_{N^\sharp \ell^{-1}}"] & A[\frakN^{\sharp} v^{-1}],
		\end{tikzcd}
	\end{center}
	which shows that $\iota \circ \pi_{\vbar} \circ \overline{\pi_v \circ \eta_{N^\sharp}} \circ (\cdot)^{1/\ell} = \eta_{N^\sharp \ell^{-1}}.$
\end{proof}

If $x \otimes \Fbar_p = x_\fraka \otimes \Fbar_p$, then the reduction $V_\ell(x) \otimes \Fbar_p$ of $V_\ell(x)$ is $x_{\vbar^{-1} \fraka} \otimes \Fbar_p$. Analogous to \cite[Lemma 4.8]{Bro15}, the relationship between their $t$-expansions is given by:
\[t_{\vbar^{-1}\fraka}(V_\ell(x)) = t_\fraka(x)^{\ell}.\]

It also follows from this identity that $V_\ell(x_{\fraka} \ast \frakn (up^{-n})) = x_{\vbar^{-1}\fraka} \ast  \frakn(up^{-n}).$ Indeed, 
\begin{align*}
	t_{\vbar^{-1} \fraka}(V_\ell(x_{\fraka} \ast \frakn (up^{-n}))) & = t_{\fraka}(x_{\fraka} \ast \frakn (up^{-n}))^{\ell} \\ & = \zeta_{p^n}^{-u \ell {\N(\fraka)}^{-1} \sqrt{-D_K}^{-1}} \\ & = \zeta_{p^n}^{-u {\N(\vbar^{-1} \fraka)}^{-1} \sqrt{-D_K}^{-1}} \\ & = t_{\vbar^{-1} \fraka}(x_{\vbar^{-1}\fraka} \ast  \frakn(up^{-n})).
\end{align*}
\begin{lemma} \label{lem:Vell-2}
	Let $\phi: \Z_p^\times \rightarrow \calO_{\C_p^\times}$ be a $p$-adic character of conductor $p^n$. We have the following identity:
	\[((\theta^{-r} V_\ell^{\ast}f)_\fraka \otimes \phi)(x_{\fraka}) = \ell^{-r} (  (\theta^{-r} f)_{\vbar^{-1} \fraka} \otimes \phi)(x_{\vbar^{-1} \fraka}).\]
\end{lemma}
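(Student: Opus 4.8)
The plan is to unfold $((\theta^{-r}V_\ell^\ast f)_\fraka\otimes\phi)(x_\fraka)$, via \cite[Proposition 3.3]{CH18}, into a finite sum of values of the $p$-adic modular form $\theta^{-r}V_\ell^\ast f$ at the twisted CM points $x_\fraka\ast\frakn(up^{-n})$, $u\in(\Z/p^n\Z)^\times$, and then to transport each such value to the $\vbar^{-1}\fraka$-side using the moduli description of $V_\ell$ established just before the statement. Since $\phi$ has conductor exactly $p^n$ it is a primitive Dirichlet character modulo $p^n$, so \cite[Proposition 3.3]{CH18} (whose proof uses only the measure attached to the form, hence applies equally to $p$-adic modular forms) gives $((\theta^{-r}V_\ell^\ast f)_\fraka\otimes\phi)(x_\fraka)=p^{-n}\frakg(\phi)\sum_{u}\phi^{-1}(u)(\theta^{-r}V_\ell^\ast f)(x_\fraka\ast\frakn(up^{-n}))$, and likewise for $\theta^{-r}f$ at $x_{\vbar^{-1}\fraka}$; the distinct normalization factors $\mathrm{N}(\fraka)$ and $\mathrm{N}(\vbar^{-1}\fraka)=\ell^{-1}\mathrm{N}(\fraka)$ hidden in $(\cdot)_\fraka$ versus $(\cdot)_{\vbar^{-1}\fraka}$ are absorbed into the Serre--Tate coordinates $t_\fraka(x_\fraka\ast\frakn(up^{-n}))=\zeta_{p^n}^{-u\mathrm{N}(\fraka)^{-1}\sqrt{-D_K}^{-1}}$ and $t_{\vbar^{-1}\fraka}(x_{\vbar^{-1}\fraka}\ast\frakn(up^{-n}))=\zeta_{p^n}^{-u\mathrm{N}(\vbar^{-1}\fraka)^{-1}\sqrt{-D_K}^{-1}}$ recalled above. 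It therefore suffices to prove the termwise identity $(\theta^{-r}V_\ell^\ast f)(x_\fraka\ast\frakn(up^{-n}))=\ell^{-r}(\theta^{-r}f)(x_{\vbar^{-1}\fraka}\ast\frakn(up^{-n}))$ for each $u$.

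The crux is the operator identity $\theta^{-r}\circ V_\ell^\ast=\ell^{-r}\,(V_\ell^\ast\circ\theta^{-r})$ on $p$-adic modular forms. For nonnegative exponents this is elementary: on $q$-expansions $\theta$ acts by $q\frac{d}{dq}$ and $V_\ell^\ast$ by $q\mapsto q^\ell$, whence $\theta\circ V_\ell^\ast=\ell\,(V_\ell^\ast\circ\theta)$ and, by induction, $\theta^k\circ V_\ell^\ast=\ell^k\,(V_\ell^\ast\circ\theta^k)$ for all $k\geq 0$, the identity of $p$-adic modular forms following from the $q$-expansion principle; alternatively one can argue on Serre--Tate expansions using $t_{\vbar^{-1}\fraka}(V_\ell(x))=t_\fraka(x)^\ell$, $\theta=t\frac{d}{dt}$, and the chain rule. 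To reach the exponent $-r$, I would apply the nonnegative case with $k=-r+(p-1)p^m$ and let $m\to\infty$, invoking the definition $\theta^{-r}=\lim_m\theta^{-r+(p-1)p^m}$ and the continuity of $V_\ell^\ast$: the scalars converge because $\ell\in\Z_p^\times$ (as $\ell\neq p$) forces $\ell^{(p-1)p^m}\to 1$ in $\Z_p$, so $\lim_m\ell^{-r+(p-1)p^m}=\ell^{-r}$.

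Granting this, the termwise identity is immediate: $(\theta^{-r}V_\ell^\ast f)(x_\fraka\ast\frakn(up^{-n}))=\ell^{-r}(V_\ell^\ast\theta^{-r}f)(x_\fraka\ast\frakn(up^{-n}))=\ell^{-r}(\theta^{-r}f)(V_\ell(x_\fraka\ast\frakn(up^{-n})))$ by the definition of $V_\ell^\ast$, and $V_\ell(x_\fraka\ast\frakn(up^{-n}))=x_{\vbar^{-1}\fraka}\ast\frakn(up^{-n})$ is precisely the computation carried out just before the lemma, combining Lemma \ref{lem:Vell-1} with $t_{\vbar^{-1}\fraka}(V_\ell(x))=t_\fraka(x)^\ell$. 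Summing over $u$ against $p^{-n}\frakg(\phi)\phi^{-1}(u)$ and running \cite[Proposition 3.3]{CH18} backwards on the right recovers $\ell^{-r}((\theta^{-r}f)_{\vbar^{-1}\fraka}\otimes\phi)(x_{\vbar^{-1}\fraka})$, which is the claim. I expect the main obstacle to be the commutation step for the negative exponent: one must check that ``$\theta^{-r}$ applied to a $p$-adic modular form and then twist-localized'' is indeed the operation occurring on both sides (so that the $q$-expansion and Serre--Tate incarnations of $\theta$ may be used interchangeably), and that the limit genuinely converges both for the definition of $\theta^{-r}$ and for $\ell^{(p-1)p^m}\to 1$; the mismatch between $\mathrm{N}(\fraka)$ and $\mathrm{N}(\vbar^{-1}\fraka)$ is a secondary bookkeeping point that is handled automatically by \cite[Proposition 3.3]{CH18} as indicated.
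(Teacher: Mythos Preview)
Your proposal is correct and follows essentially the same route as the paper: expand via \cite[Proposition 3.3]{CH18} into a Gauss-sum-weighted sum over $u\in(\Z/p^n\Z)^\times$, use the operator identity $\theta^{-r}V_\ell^\ast=\ell^{-r}V_\ell^\ast\theta^{-r}$, then apply $V_\ell(x_\fraka\ast\frakn(up^{-n}))=x_{\vbar^{-1}\fraka}\ast\frakn(up^{-n})$ and resum. The paper justifies the commutation identity in one line (``by examining $t$-expansions''), whereas you supply the explicit limit argument $\ell^{(p-1)p^m}\to 1$ for negative exponents; this extra care is harmless and arguably clarifying, and your bookkeeping remark about $\mathrm{N}(\fraka)$ versus $\mathrm{N}(\vbar^{-1}\fraka)$ being absorbed by the Serre--Tate normalization is correct and left implicit in the paper.
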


\begin{proof}
	By examining $t$-expansions, observe that
	\[\theta^{-r} V_\ell^\ast f = \ell^{-r} V_\ell^\ast \theta^{-r} f\]
	for a $p$-adic modular form $f \in V_p(N, \calW)$. 
	Combined with Lemma \ref{lem:Vell-1}, we have:
	\begin{align*}
		((\theta^{-r} V_\ell^{\ast}f)_\fraka \otimes \phi)(x_{\fraka}) & = p^{-n} \frakg(\phi) \cdot \sum_{u \in (\Z/p^n\Z)^\times} \phi^{-1}(u) (\theta^{-r} V_\ell^\ast f)(x_\fraka \ast \frakn(up^{-n})) \\  & = p^{-n} \frakg(\phi) \ell^{-r} \cdot \sum_{u \in (\Z/p^n\Z)^\times} \phi^{-1}(u)  (V_\ell^\ast \theta^{-r} f)(x_\fraka \ast \frakn(up^{-n})) \\ & = p^{-n} \frakg(\phi) \ell^{-r} \cdot \sum_{u \in (\Z/p^n\Z)^\times} \phi^{-1}(u) (\theta^{-r} f)(x_{\vbar^{-1} \fraka} \ast \frakn(up^{-n})) \\ & = \ell^{-r} ((\theta^{-r} f)_{\vbar^{-1} \fraka} \otimes \phi)(x_{\vbar^{-1} \fraka}).
	\end{align*}
\end{proof}

\begin{defn} \label{defn:Euler}
	Following \cite[8, 9]{GV00}, define $\scrP_v \in \calW \llbracket \Gamma_K^{-} \rrbracket$ such that \[\scrP_v(f) = \begin{cases}
		1 - a_\ell(f) \ell^{-r} \cdot \gamma_v + \ell^{-1} \cdot \gamma_v^2 \in \calW \llbracket \Gamma_K^{-} \rrbracket & \text{ if } \ell \nmid N, \\
		1 - a_\ell(f) \ell^{-r} \cdot \gamma_v & \text{ if } \ell \mid N.
	\end{cases}\]		
	where $\gamma_v \in \Gamma_K^{-}$ is the Frobenius at $v$. Define $\scrP_{\overline{v}}(f) \in \calW \llbracket \tilde{\Gamma} \rrbracket$ similarly.
\end{defn}	
Fix a topological generator $\gamma_0$ of $\Gamma_K^{-}$, and let $\calW \llbracket \Gamma_K^{-} \rrbracket \simeq \calW \llbracket T \rrbracket$ be the isomorphism given by $\gamma_0 \mapsto T + 1$.

\begin{lemma} \label{lem:mu=0}
	As elements in $\calW \llbracket \Gamma_K^{-} \rrbracket$, both $\scrP_v(f)$ and $\scrP_{\vbar}(f)$ have $\mu$-invariants $0$.
\end{lemma}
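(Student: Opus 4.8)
The plan is to reduce the statement to the fact that $\scrP_v(f)$ is nonzero modulo the uniformizer $\varpi_\calW$ of $\calW$: for $g \in \calW\llbracket\Gamma_K^{-}\rrbracket \simeq \calW\llbracket T\rrbracket$ one has $\mu(g) = 0$ if and only if the image of $g$ in $\Fbar_p\llbracket T\rrbracket$ is nonzero. Before that I would record two elementary points. First, $\ell$ is a $p$-adic unit: hypothesis \eqref{Hyp} gives $p \nmid N$, and $\ell$ being split in $K$ forces $\ell \neq p$, so $\ell^{-1}$ and $\ell^{-r}$ reduce to nonzero elements of $\Fbar_p$. Second, $a_\ell(f)\ell^{-r} \in \calW$: when $\ell \mid N$ this is clear as $a_\ell(f) \in \{0\}\cup\{\pm\ell^{r-1}\}$, and when $\ell \nmid N$ the Frobenius eigenvalues $\alpha,\beta$ (the roots of $X^2 - a_\ell(f)X + \ell^{2r-1}$) are algebraic integers with $\alpha\beta = \ell^{2r-1}$ a $p$-adic unit, hence $v_p(\alpha) = v_p(\beta) = 0$, so in particular $v_p(a_\ell(f)) \geq 0$.

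The key input is that the Frobenius $\gamma_v \in \Gamma_K^{-}$ is nontrivial. Suppose $\gamma_v = 1$, and let $h \geq 1$ be the order of the class of $v$ in $\Cl{\calO_K}$, so $v^h = (\alpha)$ for some $\alpha \in \calO_K$ and $\gamma_v^h = 1$. Since the anticyclotomic $\Z_p$-extension is unramified away from $\frakp,\overline{\frakp}$, the reciprocity law identifies $\gamma_{v^h}$ with the image in $\Gamma_K^{-}$ of $\overline{\alpha}/\alpha \in \calO_{K,\frakp}^\times \simeq \Z_p^\times$; as $\Gamma_K^{-} \cong \Z_p$ is torsion-free this image is trivial exactly when $\overline{\alpha}/\alpha \in \mu_{p-1}$, i.e. when $\overline{\alpha}/\alpha$ is a root of unity. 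But $\alpha\overline{\alpha} = \ell^h$ up to a unit of $\calO_K$ and $|\alpha| = |\overline{\alpha}| = \sqrt{\ell^h}$, so $\overline{\alpha}/\alpha$ is a root of $X^2 - (t^2\ell^{-h} - 2)X + 1$ with $t = \alpha + \overline{\alpha} \in \Z$; being a root of unity forces $\ell^h \mid t^2$, and together with $|t| \leq 2\sqrt{\ell^h}$ a short case analysis on $t$ shows that $(\alpha) = (\ell^{h/2})$ or $\sqrt{-\ell} \in K$, either of which contradicts that $\ell$ splits in $K$. Hence $\gamma_v \neq 1$, and likewise $\gamma_{\overline{v}} = \gamma_v^{-1} \neq 1$.

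Granting $\gamma_v \neq 1$, use the isomorphism $\calW\llbracket\Gamma_K^{-}\rrbracket \simeq \calW\llbracket T\rrbracket$ with $\gamma_0 \mapsto 1 + T$ and write $\gamma_v = \gamma_0^s$ with $s = p^j u \in \Z_p \setminus \{0\}$, $u \in \Z_p^\times$. The image $\overline{\gamma}_v$ of $\gamma_v$ in $\Fbar_p\llbracket T\rrbracket$ is the reduction of $(1+T)^{p^j u}$, which equals $(1 + T^{p^j})^u = 1 + \overline{u}T^{p^j} + \cdots$; as $\overline{u} \neq 0$ this is a non-constant power series, hence transcendental over $\Fbar_p$ (the only elements of $\Fbar_p\llbracket T\rrbracket$ algebraic over $\Fbar_p$ are the constants). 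Now the reduction of $\scrP_v(f)$ modulo $\varpi_\calW$ is $P(\overline{\gamma}_v)$, where $P(X) = \overline{\ell^{-1}}X^2 - \overline{a_\ell(f)\ell^{-r}}X + 1$ if $\ell \nmid N$ (leading coefficient nonzero by the first paragraph), and $P(X) = 1 - \overline{a_\ell(f)\ell^{-r}}X$ if $\ell \mid N$ (this is $1$ when $a_\ell(f) = 0$, and a nonzero linear polynomial otherwise). In every case $P$ is a nonzero polynomial over $\Fbar_p$, so $P(\overline{\gamma}_v) \neq 0$ by transcendence of $\overline{\gamma}_v$; therefore $\scrP_v(f) \not\equiv 0 \pmod{\varpi_\calW}$ and $\mu(\scrP_v(f)) = 0$. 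The same argument with $\gamma_{\overline{v}}$ gives $\mu(\scrP_{\overline{v}}(f)) = 0$.

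The main obstacle is the nontriviality of $\gamma_v$; once that is in hand the rest is formal. Establishing $\gamma_v \neq 1$ is exactly where the splitting of $\ell$ in $K$ enters, via the reciprocity-law identification of the Frobenius together with the elementary observation that $\overline{\alpha}/\alpha$ cannot be a root of unity for a split prime. Alternatively, one may invoke the standard fact that a prime of $K$ split over $\Q$ does not split completely in the anticyclotomic $\Z_p$-extension.
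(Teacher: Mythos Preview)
Your proof is correct and follows the same underlying strategy as the paper: show that $\scrP_v(f)$ has nonzero image in $\Fbar_p\llbracket T\rrbracket$ by analyzing the mod-$p$ expansion of $(1+T)^a$, where $\gamma_v = \gamma_0^a$. Two differences are worth noting. First, the paper tacitly assumes $a \neq 0$ when it writes ``let $k$ be the smallest index such that $a_k \neq 0$'', whereas you explicitly argue that $\gamma_v \neq 1$ from the splitting of $\ell$ in $K$; this is the well-known fact that primes of $K$ above split rational primes do not split completely in the anticyclotomic $\Z_p$-extension, and your reciprocity-law computation via $\overline{\alpha}/\alpha$ is on the right track, though the assertion that the image of $\overline{\alpha}/\alpha$ in $\Gamma_K^-$ vanishes \emph{exactly} when $\overline{\alpha}/\alpha \in \mu_{p-1}$ would need a word on why $\rec_\frakp$ is injective on $1+p\Z_p$. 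Second, in place of the paper's explicit coefficient check (and, for $\ell \nmid N$, its factorization of $\scrP_v(f)$ into linear factors, which may require enlarging $\calW$), you observe that the reduction $\overline{\gamma}_v \in \Fbar_p\llbracket T\rrbracket$ is non-constant and hence transcendental over $\Fbar_p$, so no nonzero polynomial in $\overline{\gamma}_v$ can vanish. This handles both cases $\ell \mid N$ and $\ell \nmid N$ uniformly and is a tidy alternative to the paper's computation.
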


\begin{proof}
	One may write $\gamma_v = \gamma_0^{a}$ where $a \in \Z_p$. For $\ell \mid N$, $\scrP_{\overline{v}} = 1 - a_\ell(f) \ell^{-r} \cdot (1 + T)^{a}$. Let $a = \sum_{n \geq k} a_k p^k$, where $a_n \in \{0, \cdots, p - 1\}$ and $k$ is the smallest index such that $a_k \neq 0$. One has the following congruence: \[(1 + T)^{f_v} \equiv \prod_{n \geq k} (1 + T^{p^n})^{a_n} \pmod{\varpi},\]
	from which it follows that 
	\[\scrP_v(f)(T) \equiv 1 - a_\ell(f) \ell^{-r} (1 + T^{p^k})^{a_k} \equiv 1 - a_\ell(f) \ell^{-r} (1 + a_k T^{p^k}) \\ \pmod{(\varpi, T^{2p^{k}})},\]
	and therefore $\scrP_v(f)(T) \neq 0 \pmod \varpi$. The analogous statement for $\scrP_{\overline{v}}(f)$ also holds. 
	
	We can similarly show that $\mu(\scrP_\ell(f)) = 0$ for $\ell \nmid N$. Indeed, we may write $\scrP_v(f) = (1 - a_\ell \cdot \gamma_v)(1 - b_\ell \cdot \gamma_v)$, and it can be shown by the same argument as above that both $1 - a_\ell \cdot \gamma_v, 1 - b_\ell \cdot \gamma_v$ have $\mu$-invariants $0$.  The same argument applies to $\scrP_{\overline{v}}(f)$. 
\end{proof}

\begin{thm} \label{l-dep-2}
	Let $f^{(\ell)}$ be the $\ell$-depletion of $f$, considered as a geometric modular form of level $N^\sharp$ where $\lcm(\ell, N) \mid N^\sharp$. Then
	$\scrL_{\frakp}(f^{(\ell)}) = \scrP_{\vbar}(f) \scrL_{\frakp}(f).$
\end{thm}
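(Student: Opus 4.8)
The plan is to use that the assignment $f\mapsto\scrL_\frakp(f)$ depends on $f$ only through the $p$-adic modular form $\widehat{f^\flat}$, and does so $\calW$-linearly: by the $\theta$-operator formula at the end of Section \ref{sec:defn:L-func},
\[
\scrL_\frakp(f)(\rho)=(\sqrt{-D_K})^r\sum_{[\fraka]\in\Pic{\calO_K}}\bigl((\theta^{-r}\widehat{f^\flat})_\fraka\otimes\rho|[\fraka]\bigr)(A_\fraka,\eta_\fraka)
\]
for every continuous $\rho\colon\widetilde{\Gamma}\to\calO_{\C_p}$, and $\theta^{-r}$, the $p$-adic avatar, the twist $(-)\otimes\rho|[\fraka]$, evaluation at a CM point and the sum over $\Pic{\calO_K}$ are all $\calW$-linear. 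First I would note that $p$-depletion commutes with $V_\ell^\ast$: since $\ell\neq p$ and $V_\ell^\ast$ acts on $q$-expansions by $q\mapsto q^\ell$, one has $(V_\ell^\ast g)^\flat=V_\ell^\ast(g^\flat)$ on $q$-expansions, hence on geometric modular forms by the $q$-expansion principle, so together with the lemma $V_\ell^\ast\widehat{g}=\widehat{V_\ell^\ast g}$ this gives
\[
\widehat{(f^{(\ell)})^\flat}=\widehat{f^\flat}-a_\ell(f)\,V_\ell^\ast\widehat{f^\flat}+\ell^{2r-1}(V_\ell^\ast)^2\widehat{f^\flat}
\]
when $\ell\nmid N$, and $\widehat{(f^{(\ell)})^\flat}=\widehat{f^\flat}-a_\ell(f)\,V_\ell^\ast\widehat{f^\flat}$ when $\ell\mid N$ (enlarging $N^\sharp$ if needed so that the $V_\ell^\ast$'s are defined on $\Ig(N^\sharp)$, which is harmless by the level-independence of values at CM points noted before Lemma \ref{lem:Vell-1}). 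Expanding $\scrL_\frakp(f^{(\ell)})$ by linearity, we are reduced to evaluating the construction with $V_\ell^\ast\widehat{f^\flat}$ and $(V_\ell^\ast)^2\widehat{f^\flat}$ in place of $\widehat{f^\flat}$.

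The crux is the identity: replacing $\widehat{f^\flat}$ by $V_\ell^\ast\widehat{f^\flat}$ in the construction multiplies the output by $\ell^{-r}\gamma_{\vbar}\in\calW\llbracket\widetilde{\Gamma}\rrbracket$. Evaluating at $\rho$ and applying Lemma \ref{lem:Vell-2} with $\phi=\rho|[\fraka]$ (the identity there follows from the $t$-expansion relation $\theta^{-r}V_\ell^\ast=\ell^{-r}V_\ell^\ast\theta^{-r}$, Lemma \ref{lem:Vell-1}, and the compatibility $V_\ell(x_\fraka\ast\frakn(up^{-n}))=x_{\vbar^{-1}\fraka}\ast\frakn(up^{-n})$ recorded before it, so it extends from Dirichlet characters to arbitrary continuous $\phi$ by density of finite-order characters), one obtains
\[
\sum_{[\fraka]\in\Pic{\calO_K}}\bigl((\theta^{-r}V_\ell^\ast\widehat{f^\flat})_\fraka\otimes\rho|[\fraka]\bigr)(A_\fraka,\eta_\fraka)=\ell^{-r}\sum_{[\fraka]\in\Pic{\calO_K}}\bigl((\theta^{-r}\widehat{f^\flat})_{\vbar^{-1}\fraka}\otimes\rho|[\fraka]\bigr)(A_{\vbar^{-1}\fraka},\eta_{\vbar^{-1}\fraka}).
\]
Reindexing the sum over $\Pic{\calO_K}$ by $\mathfrak{b}=\vbar^{-1}\fraka$ and writing $\fraka=a\widehat{\calO}_K\cap K$, $\mathfrak{b}=b\widehat{\calO}_K\cap K$ with $a=wb$ for $w$ a uniformizer at $\vbar$, the recipe $\rho|[\fraka](x)=\rho(\rec_\frakp(x)\rec_K(a))$ gives $\rho|[\vbar\mathfrak{b}]=\rho(\rec_K(w))\cdot\rho|[\mathfrak{b}]$, and $\rec_K(w)$ restricts on $\widetilde{\Gamma}$ to the Frobenius $\gamma_\vbar$ at $\vbar$ for the geometrically normalized reciprocity law; pulling the scalar $\rho(\gamma_\vbar)$ out of the twist then identifies the right-hand side above with $\ell^{-r}\rho(\gamma_\vbar)(\sqrt{-D_K})^{-r}\scrL_\frakp(f)(\rho)$. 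As this holds for a dense set of $\rho$, it holds in $\calW\llbracket\widetilde{\Gamma}\rrbracket$, exactly as in the proof of Lemma \ref{lem:cong}; iterating once more produces the factor $\ell^{-2r}\gamma_\vbar^2$ for $(V_\ell^\ast)^2\widehat{f^\flat}$. Substituting into the linear expansion of $\scrL_\frakp(f^{(\ell)})$ (the scalars $(\sqrt{-D_K})^{\pm r}$ cancel), for $\ell\nmid N$ one gets
\[
\scrL_\frakp(f^{(\ell)})=\bigl(1-a_\ell(f)\ell^{-r}\gamma_\vbar+\ell^{2r-1}\ell^{-2r}\gamma_\vbar^2\bigr)\scrL_\frakp(f)=\scrP_\vbar(f)\,\scrL_\frakp(f),
\]
and for $\ell\mid N$ the two-term version gives $\scrL_\frakp(f^{(\ell)})=(1-a_\ell(f)\ell^{-r}\gamma_\vbar)\scrL_\frakp(f)=\scrP_\vbar(f)\scrL_\frakp(f)$.

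The step I expect to be the main obstacle is the reciprocity bookkeeping in the reindexing: one must verify, with the conventions fixed in Sections \ref{sec:CM}--\ref{sec:defn:L-func}, that the substitution $\fraka\mapsto\vbar^{-1}\fraka$ produces exactly $\gamma_\vbar$ --- the correct prime of the pair $\{v,\vbar\}$ and the geometric rather than arithmetic normalization of Frobenius --- since $\scrP_\vbar(f)$ is written in Definition \ref{defn:Euler} in terms of $\gamma_\vbar$. A related point to check is that the CM points in Lemma \ref{lem:Vell-1} carry level structure aligned along $v$, so the ideals $\vbar^{-1}\fraka$ occurring after reindexing are again prime to $\frakN\frakp$ and index genuine CM points on $\Ig(N^\sharp\ell^{-1})$; once these conventions are in place, everything else is a formal consequence of $\calW$-linearity together with Lemmas \ref{lem:Vell-1} and \ref{lem:Vell-2}.
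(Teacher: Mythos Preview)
Your proposal is correct and follows essentially the same route as the paper: compute $\scrL_\frakp(V_\ell^\ast f)(\rho)$ by applying Lemma \ref{lem:Vell-2} termwise, reindex the sum over $\Pic\calO_K$ by $\fraka\mapsto\vbar^{-1}\fraka$ to extract the factor $\ell^{-r}\rho(\vbar)$, and then read off $\scrP_\vbar(f)$ from the linear expansion of $f^{(\ell)}$. You are in fact more explicit than the paper on two points it leaves implicit --- the commutation of $p$-depletion with $V_\ell^\ast$ and the reciprocity bookkeeping identifying the reindexing factor with $\gamma_\vbar$ --- but these are exactly the details underlying the paper's two-line computation, not a different argument.
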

\begin{proof} 
	For every locally algebraic character $\rho \in \frakX_{p^\infty}$, we use Lemma \ref{lem:Vell-2} to obtain the following identity:
	\begin{align*}
		\scrL_{\frakp}(V^\ast_\ell f)(\rho)& = (\sqrt{-D_K})^{r} \sum_{[\fraka] \in \Pic \calO_K} ((\theta^{-r} V_\ell^\ast \widehat{f}^{\flat})_\fraka \otimes \rho|[\fraka])(A_\fraka, \eta_\fraka) \\  & = \rho(\vbar) \ell^{-r} (\sqrt{-D_K})^{r} \sum_{[\fraka] \in \Pic{\calO_K}}  ((\theta^{-r} \widehat{f}^{\flat})_{\vbar^{-1}\fraka} \otimes \rho |[\vbar^{-1}\fraka]) (A_{\vbar^{-1}\fraka}, \eta_{\vbar^{-1}\fraka}) \\ & = \rho(\vbar) \ell^{-r} \scrL_{\frakp}(f)(\rho).
	\end{align*}
	Hence $\scrL_{\frakp}(f^{(\ell)}) = \scrL_{\frakp}(f - a_\ell(f) V_\ell^\ast f)(\rho) = (1 - a_\ell(f)\rho(\vbar)\ell^{-r})\scrL_{\frakp}(f)$ for $\ell \mid N$, and for $\ell \nmid N$ we have $\scrL_{\frakp}(f^{(\ell)}) = (1 - a_\ell(f)\rho(\vbar)\ell^{-r} + \rho(\vbar)^2 \ell^{-1}) \scrL_{\frakp}(f).$
\end{proof}

\begin{thm} \label{thm:main}
	Suppose that $f_1 \in S_{2r_1}(\Gamma_0(N_1))^\new, f_2 \in S_{2r_2}(\Gamma_0(N_2))^\new$ are newforms satisfying Hypothesis \eqref{Hyp} whose coefficients lie in some $p$-adic field $F$. Assume that $\calW$ is the ring of integers of a finite extension of $\widehat{\Q_p^{nr}} \cdot F$.

	Suppose that the induced  semi-simplified $\mod{\varpi}^m$ Galois representations: $\bar{\rho}_{f_1}, \bar{\rho}_{f_2}: G_\Q \rightarrow \GL_2(\calO_F/\varpi^m\calO_F)$ are isomorphic, where $\varpi$ is the uniformizer of $\calO_{F}$. For each prime $\ell \mid N_1 N_2$, let $v \mid \frakN_1 \frakN_2$ be the corresponding prime above $\ell$. Then the following congruence holds:
	\begin{align*}
		\prod_{\ell \mid N_1 N_2} \scrP_{\vbar}(f_1) \scrL_{\frakp}(f_1) \equiv \prod_{\ell \mid N_1 N_2} \scrP_{\vbar}(f_2)  \scrL_{\frakp}(f_2)  \pmod{\varpi^m \calW \llbracket \Gamma_K^{-} \rrbracket}.
	\end{align*}
	Moreover, one has the following:
	\begin{enumerate}
		\item $\mu(\scrL_{\frakp}(f_1)) = 0$ if and only if $\mu(\scrL_{\frakp}(f_2)) = 0$.
		\item Assuming that $\mu(\scrL_{\frakp}(f_1)) = \mu(\scrL_{\frakp}(f_2)) = 0$, 
		\[\sum_{\ell \mid N_1 N_2} \lambda(\scrP_{\vbar}(f_1)) + \lambda(\scrL_{\frakp}(f_1)) = \sum_{\ell \mid N_1 N_2} \lambda(\scrP_{\vbar}(f_2)) +  \lambda(\scrL_{\frakp}(f_2)).\]
	\end{enumerate}
\end{thm}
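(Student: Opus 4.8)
The plan is to \emph{deplete away} the primes dividing $N_1N_2$, replacing $\scrL_{\frakp}(f_i)$ together with its Euler factors by the BDP $p$-adic $L$-function of a modular form of a common auxiliary level whose Fourier coefficients are genuinely congruent modulo $\varpi^m$, and then invoke (the proof of) Lemma \ref{lem:cong}. Concretely, set $N^\sharp := \lcm(N_1,N_2)\cdot\prod_{\ell\mid N_1N_2}\ell^{2}$, and fix a factorization $\frakN^\sharp\,\overline{\frakN^\sharp}=N^\sharp\calO_K$ compatible with $\frakN_1$ and $\frakN_2$ (possible because of the identification $\calO_K/\lcm(\frakN_1,\frakN_2)\simeq\Z/\lcm(N_1,N_2)\Z$ from Section \ref{sec:cong}); note $N^\sharp$ again satisfies \eqref{Hyp} for $K$ (for both relevant weights), every prime dividing it splitting in $K$ and the conditions on $p$ being inherited from $f_1,f_2$. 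Viewing each $f_i$ as a geometric modular form of level $N^\sharp$, let $f_i^{(\ast)}$ be obtained from $f_i$ by applying the $(\ell)$-stabilization $f\mapsto f^{(\ell)}$ — with the branch chosen according to whether $\ell\mid N_i$ — for every prime $\ell\mid N_1N_2$; these operations commute since the underlying $V_\ell^\ast$ act on $q$-expansions by $q\mapsto q^{\ell}$. Re-running the computation in the proof of Theorem \ref{l-dep-2}, which relies only on Lemma \ref{lem:Vell-2} and is insensitive to $f_i$ being a newform, shows that each application of $f\mapsto f^{(\ell)}$ multiplies $\scrL_{\frakp}$ by the Euler factor $\scrP_{\vbar}(f_i)$, so that
\[\scrL_{\frakp}(f_i^{(\ast)}) \;=\; \prod_{\ell\mid N_1N_2}\scrP_{\vbar}(f_i)\cdot\scrL_{\frakp}(f_i)\qquad (i=1,2)\]
in $\calW\llbracket\Gamma_K^-\rrbracket$. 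It therefore suffices to prove $\scrL_{\frakp}(f_1^{(\ast)})\equiv\scrL_{\frakp}(f_2^{(\ast)})\pmod{\varpi^m\calW\llbracket\Gamma_K^-\rrbracket}$.

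Next I would pin down the relevant $q$-expansions. Since the $(\ell)$-stabilization annihilates the $\ell$-part of the $q$-expansion of a newform (via the Hecke recursion $a_{\ell^{k+1}}=a_\ell a_{\ell^k}-\ell^{2r-1}a_{\ell^{k-1}}$ for $\ell\nmid N$, respectively $a_{\ell^{k+1}}=a_\ell a_{\ell^k}$ for $\ell\mid N$), multiplicativity gives $f_i^{(\ast)}(q)=\sum_{\gcd(n,N_1N_2)=1}a_n(f_i)q^n$, hence $(f_i^{(\ast)})^\flat(q)=\sum_{\gcd(n,N_1N_2p)=1}a_n(f_i)q^n$. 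On the arithmetic side, the isomorphism of the semisimplified mod-$\varpi^m$ representations forces, by comparing determinants (which equal $\chi_{\cyc}^{2r_i-1}$) at Frobenius elements, the congruence $\chi_{\cyc}^{2r_1-1}\equiv\chi_{\cyc}^{2r_2-1}\pmod{\varpi^m}$, hence $2r_1\equiv 2r_2\pmod{\varphi(p^{m'})}$ where $p^{m'}\Z_p=\varpi^m\calO_{L_p}\cap\Z_p$; and, comparing traces at Frobenius elements via Chebotarev, $a_\ell(f_1)\equiv a_\ell(f_2)\pmod{\varpi^m}$ for every prime $\ell\nmid N_1N_2p$. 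As $\ell\in\Z_p^\times$ for such $\ell$ and $p^{m'}\Z_p\subseteq\varpi^m\calO_{L_p}$, one has $\ell^{2r_1-1}\equiv\ell^{2r_2-1}\pmod{\varpi^m}$, so the Hecke recursions and multiplicativity propagate the congruence to $a_n(f_1)\equiv a_n(f_2)\pmod{\varpi^m}$ for all $n$ with $\gcd(n,N_1N_2p)=1$ — exactly the coefficients appearing in $(f_i^{(\ast)})^\flat$.

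I would then run the argument of Lemma \ref{lem:cong}, now applied to $f_1^{(\ast)},f_2^{(\ast)}$ of the common level $N^\sharp$: its only inputs are the congruence $\widehat{(f_1^{(\ast)})^\flat}\equiv\widehat{(f_2^{(\ast)})^\flat}\pmod{\varpi^m}$ (from the previous paragraph together with the $q$-expansion principle) and the relation $2r_1\equiv 2r_2\pmod{\varphi(p^{m'})}$. If $r_1\equiv r_2\pmod{\varphi(p^{m'})}$ the twisted measures $\theta^{-r_i}\widehat{(f_i^{(\ast)})^\flat}$ and the factors $\mathrm{N}(\fraka)^{-r_i}$ are directly congruent; otherwise $r_1\equiv r_2+\tfrac12\varphi(p^{m'})$ and one picks up a twist by the quadratic Hecke character $\psi$ with $\psi_\frakp=\left(\tfrac{\cdot}{p}\right)$, whose restriction to $\Gamma_K^-$ is trivial. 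In either case $\scrL_{\frakp}(f_1^{(\ast)})\equiv\scrL_{\frakp}(f_2^{(\ast)})\pmod{\varpi^m\calW\llbracket\Gamma_K^-\rrbracket}$, which combined with the displayed identity yields the asserted congruence. Statements (1) and (2) then follow formally: by Lemma \ref{lem:mu=0} each $\scrP_{\vbar}(f_i)$ has vanishing $\mu$-invariant, so $\mu\big(\prod_\ell\scrP_{\vbar}(f_i)\,\scrL_{\frakp}(f_i)\big)=\mu(\scrL_{\frakp}(f_i))$ and $\lambda\big(\prod_\ell\scrP_{\vbar}(f_i)\,\scrL_{\frakp}(f_i)\big)=\sum_\ell\lambda(\scrP_{\vbar}(f_i))+\lambda(\scrL_{\frakp}(f_i))$; and two elements of $\calW\llbracket\Gamma_K^-\rrbracket$ congruent modulo $\varpi^m$ have equal image in $(\calW/\varpi)\llbracket\Gamma_K^-\rrbracket$, so they simultaneously do or do not have zero $\mu$-invariant, and when both vanish they have equal $\lambda$-invariant.

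The step I expect to be the main obstacle is the first: verifying that $\scrL_{\frakp}$ is well defined on the non-classical forms $f_i^{(\ast)}$ of level $N^\sharp$, that the CM points underlying $\scrL_{\frakp}(f_1)$ and $\scrL_{\frakp}(f_2)$ can be taken compatibly at this common level (this is exactly where the Section \ref{sec:cong} identification is used), and that the proof of Theorem \ref{l-dep-2} really does iterate over the primes $\ell\mid N_1N_2$ to produce the full product of Euler factors with the correct occurrences of $\gamma_{\vbar}$. The remaining arithmetic inputs — the Chebotarev argument and the determinant/weight bookkeeping — and the appeal to the argument of Lemma \ref{lem:cong} are then routine.
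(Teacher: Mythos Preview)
Your proposal is correct and follows essentially the same strategy as the paper: deplete at all primes $\ell\mid N_1N_2$ to reach a common level $N^\sharp$, iterate Theorem \ref{l-dep-2} to factor out the product of Euler factors $\scrP_{\vbar}(f_i)$, and then apply (the proof of) Lemma \ref{lem:cong} to the resulting $q$-congruent forms, deducing (1) and (2) from Lemma \ref{lem:mu=0}. You are in fact more careful than the paper on two points the paper leaves implicit: that Lemma \ref{lem:cong} must be invoked for the depleted forms $f_i^{(\ast)}$ (which are not newforms, so only the \emph{argument} of the lemma applies), and that the congruence $a_n(f_1)\equiv a_n(f_2)\pmod{\varpi^m}$ for $\gcd(n,N_1N_2p)=1$ must be extracted from the isomorphism of residual representations via Chebotarev and the Hecke recursion.
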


\begin{proof}
	Let $N^\sharp := \lcm_{\ell \mid N_1N_2}(N_1, N_2, \ell^2)$, and let $\frakN^\sharp = \lcm_{v \mid \frakN_1\frakN_2}(\frakN_1, \frakN_2, v^2)$. Since $f_1^{(N_1 N_2)} \equiv f_2^{(N_1 N_2)} \pmod{\varpi^m}$, Lemma \ref{lem:cong} gives the following congruence:
	\begin{align*}
		\scrL_{\frakp}(f_1^{(N_1 N_2)}) \equiv \scrL_{\frakp}(f_2^{(N_1 N_2)}) \pmod{\varpi^m \calW \llbracket \Gamma_K^{-} \rrbracket}.
	\end{align*}
	By repeatedly applying Theorem \ref{l-dep-2}, we have
	$$ \scrL_{\frakp}(f^{(N_1 N_2)})= \left( \prod_{\ell \mid N_1 N_2} \scrP_{\vbar}(f) \right) \scrL_{\frakp}(f)$$
	for each $f \in \{f_1, f_2\}$. Thus, the previous congruence is equivalent to
	\begin{align*}
		\left(\prod_{\ell \mid N_1 N_2} \scrP_{\vbar}(f_1) \right)\scrL_{\frakp}(f_1)  \equiv \left (\prod_{\ell \mid N_1 N_2}  \scrP_{\vbar}(f_2) \right) \scrL_{\frakp}(f_2)\\  \pmod{\varpi^m \calW \llbracket \Gamma_K^{-} \rrbracket}
	\end{align*}
	This congruence also holds over $\calW \llbracket \Gamma_K^- \rrbracket / \varpi \calW \llbracket \Gamma_K^- \rrbracket \simeq \Fbar_p \llbracket \Gamma_K^- \rrbracket \simeq \Fbar_p \llbracket T \rrbracket$.  Since $\mu(\scrP_{\ell}(f_1)) = \mu(\scrP_{\ell}(f_2)) = 0$ by Lemma \ref{lem:mu=0}, we have $\mu(\scrL_{\frakp}(f_1)) = 0$ if and only if $\mu(\scrL_{\frakp}(f_2)) = 0$.
	
	Note that for an element $\scrP \in \calW \llbracket \Gamma_K^{-} \llbracket \simeq \calW \rrbracket T \rrbracket$ with $\mu(\scrP) = 0$, we have $\lambda(\scrP) = \deg(\overline{\scrP})$, where $\overline{\scrP} \in \Fbar_p \llbracket T \rrbracket$ is the reduction of $\scrP$ $\mod{\varpi}$. When $\mu(\scrL_{\frakp}(f_1)) = \mu(\scrL_{\frakp}(f_2)) = 0$, it follows that 	
	$$\sum_{\ell \mid N_1 N_2} \lambda(\scrP_{\vbar}(f_1)) + \lambda(\scrL_{\frakp}(f_1)) = \sum_{\ell \mid N_1 N_2} \lambda(\scrP_{\vbar}(f_2)) + \lambda(\scrL_{\frakp}(f_2)).$$
\end{proof}

\section{Applications to generalized Heegner cycles} \label{sec:Heeg}
In this section, we follow the set-up of \cite[Section 4]{CH18}. As before, let $f \in S_{2r}(\Gamma_0(N))^{\new}$ be a normalized Hecke eigenform of weight $2r$ and level $N$ satisfying hypothesis \eqref{Hyp}.
\subsection{Generalized Heegner classes}  \label{sec:defHeeg}

Recall that $K = \Q(\sqrt{-D_K})$ where $D_K$ is the discriminant of $K$, and for $r > 1$ assume that either $-D_K > 3$ is odd, or $8 \mid D_K$. Such an assumption gurantees a canonical choice of elliptic curve $A$ with CM by $\calO_K$, defined over the real subfield of the Hilbert class field $H_K$ of $K$ \cite[Section 4.1]{CH18}. 

Recall that $V = V_f(r)$ is the self-dual $p$-adic Galois representation associated with $f$. Let $T$ be a $G_\Q$-stable lattice in $V$. For primes $p$ such that $p \nmid 2 (2r - 1)! N \phi(N)$, denote by $z_{f, \chi} \in H^1_f(K, T \otimes \chi)$ the generalized Heegner class attached to $(f, \chi)$ \cite[Section 4.5]{CH18}. We remark that the construction involves the aforementioned canonical CM elliptic curve $A$.

\subsection{Bloch-Kato logarithm map for a $p$-adic Galois representation}
Now, we recall the definition of the Bloch-Kato logarithm map. Let $\B_{\dR}, \B_{\cris}$ be Fontaine's rings of $p$-adic periods \cite[Definition 5.15, Definition 6.7]{FO08}, and let $t \in \B_{\dR}$ be Fontaine's $p$-adic analogue of $2\pi i$ \cite[Section 5.2.3]{FO08}.

Let $F/\Q_p$ be a finite extension. Suppose that $V$ is an $F$-vectorspace that is also a $G_L$-module for some finite extension $L/\Q_p$ (such as the $\Gal(\overline{\Q_p}/\Q_p)$-representation $V = V_f(r)$ where $F$ is a $p$-adic field containing the coefficients of $f$). 

Denote by $\D_{\dR,  L}(V)$ the filtered $(L \otimes_{\Q_p} F)$-module $(\B_{\dR} \otimes V)^{G_L}$ and define $H^1_f(L, V) := \ker(H^1(L, V) \rightarrow H^1(L, \B_{\cris} \otimes V))$ in accordance with \cite[(3.7.2)]{BK90}. If $V$ is a de Rham representation, the following exponential map is due to Bloch and Kato \cite[Section 3]{BK90}:
\[\exp: \frac{\D_{\dR, L}(V)}{\Fil^{0} \D_{\dR, L}(V)} \rightarrow  H^1_f(L, V).\]
The logarithm map is defined as its inverse:
\[\log: H^1_f(L, V) \rightarrow \frac{\D_{\dR, L}(V)}{\Fil^{0} \D_{\dR, L}(V)} = (\Fil^{0} \D_{\dR, L}(V^\ast(1)))^{\vee}.\]

In the special case where $V$ is the $p$-adic representation attached an abelian variety, $H^1_f(L, V)$ is the image of the Kummer map in $H^1(L, V)$ and the Bloch-Kato logarithm is the usual logarithm map (see \cite[Example 3.11]{BK90}).

For any $p$-adic field $L$ containing $H_{K, \frakp}$, there is a decomposition 
\[H^1_{\dR}(A/L) = H^0_{\dR}(A/L, \Omega^{1}_{A/L}) \oplus H^1_{\dR}(A/L, \Omega^{0}_{A/L}). 
\]
for an elliptic curve $A_{/L}$ with CM by $\calO_K$. Recall our fixed choice of N\'{e}ron differential $\omega_A$, and let $\eta_A \in H^1_{\dR}(A/L, \Omega^{0}_{A/L})$ such that $\langle \omega_A, \eta_A \rangle = 1$ under the algebraic deRham cup product. 

Let $\omega_A^{r - 1 + j} \eta_A^{r - 1 -j}$ be a basis of $\D_{\dR, F}(\Sym^{2r -2}H^1_{\et}(A_{\overline{\Q}}, \Q_p))$ as defined in \cite[(1.4.6)]{BDP13}.

Let $W_{2r - 2}$ be the Kuga-Sato variety of dimension $2r - 1$. To our cusp form $f$ with coefficients in a $p$-adic field $F$, one may attach an element in  $\widetilde\omega_f \in H^{2r - 1}(W_{2r - 2}/F)$ via \cite[(1.1.12)]{BDP13} and \cite[Lemma 2.2]{BDP13}. Moreover, $V_f$ may be realized as a quotient of $H^{2r - 1}_{\et}({W_{2r - 2}}_{/\overline{\Q}}, \overline{\Q}_p) \otimes_{\overline{\Q}_p} F$ by the work of Scholl \cite{Sch90}. Let $\omega_f \in \D(V_f)$ be the image of $\tilde{\omega}_f \in H_{\dR}^{2r - 1}(W_{2r - 2}/F)$ under the composition
\[
H_{\dR}^{2r - 1}(W_{2r - 2}/F) \simeq \D_{\dR}(H_{\et}^{2r - 1}({W_{2r - 2}}_{/\overline{\Q}}, \overline{\Q_p}) \otimes_{\Q_p} F) \rightarrow \D_{\dR}(V_f).
\]

\subsection{A $p$-adic Gross-Zagier formula}
Recall the following $p$-adic Gross-Zagier formula \cite[Theorem 4.9]{CH18}, with the constant term later corrected in the extension to the quaternionic setting due to Magrone \cite[Theorem 6.4]{Mag22}. We remark that Theorem 4.9 of \cite{CH18} extends the main result of Bertolini-Darmon-Prasanna (see \cite[p.1083]{BDP13}, \cite[Theorem 5.13]{BDP13}) to characters that are ramified at $p$. 

\begin{thm} \label{thm: Castella-Hsieh} \cite[Theorem 4.9]{CH18}
	Suppose $p=\frakp \overline{\frakp}$ splits in $K$ and let $f \in S_{2r}(\Gamma_0(N))^{\new}$ be a Hecke eigen-newform of weight $2r$. If $\chi = \widehat{\phi} \in \mathfrak{X}_{p^{\infty}}$ is the $p$-adic avatar of an anticyclotomic Hecke character of infinity type $(j, -j)$ with $-r < j < r$ and conductor $p^n \mathcal{O}_K$ with $n \geq 1$, then
	\[
	\frac{\scrL_{\frakp}(f)(\chi^{-1})}{\Omega_p^{-2j}} =\frac{\mathfrak{g}(\phi_{\frakp}^{-1}) (\sqrt{-D_K})^{r + j} p^{n(-j - r)} \chi_{\frakp}^{-1}(p^n)}{(r - 1 + j)!} \cdot \langle \log_{\frakp} (z_{f, \chi}), \omega_f \otimes \omega_{A}^{r - 1 + j} \eta_A^{r - 1 - j} t^{1 -2r} \rangle,
	\]
	where $\Omega_p$ is the $p$-adic period of the canonical elliptic curve $A$ in Section \ref{sec:defHeeg}.
\end{thm}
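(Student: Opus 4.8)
The plan is to unfold the definition of $\scrL_{\frakp}(f)$ until it visibly computes the $p$-adic Abel--Jacobi image of the generalized Heegner class $z_{f,\chi}$. The starting point is the description obtained at the end of Section \ref{sec:defn:L-func},
\[
\scrL_{\frakp}(f)(\rho) = (\sqrt{-D_K})^{r} \sum_{[\fraka] \in \Pic\calO_K} \bigl((\theta^{-r}\widehat{f}^{\flat})_{\fraka} \otimes \rho|[\fraka]\bigr)(A_\fraka, \eta_\fraka),
\]
which I would specialize at $\rho = \chi^{-1}$. Since $\chi$ has infinity type $(j,-j)$ and conductor $p^{n}\calO_K$ with $n \geq 1$, the $\frakp$-component of $\chi^{-1}$ is a primitive Dirichlet character of conductor $p^{n}$ times the character $x \mapsto x^{j}$ on $\Z_{p}^{\times}$; combining the latter with the $x^{-r}$ in the integrand --- which accounts for the operator $\theta^{-r}$ --- and invoking Proposition 3.3 of \cite{CH18} (recalled in Section \ref{sec:T_p}) expresses each local integral at $x_{\fraka}$ as a finite sum, over $u \in (\Z/p^{n}\Z)^{\times}$, of the values $(\theta^{j-r}\widehat{f}^{\flat})(x_{\fraka} \ast \frakn(up^{-n}))$ weighted by $\phi_{\frakp}$ and the Gauss sum $\mathfrak{g}(\phi_{\frakp}^{-1})$; the Serre--Tate coordinates of these twisted CM points were computed in Section \ref{sec:T_p}. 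At this point everything is reduced to evaluating the iterated derivative $\theta^{j-r}\widehat{f}^{\flat}$, which makes sense since $-r < j < r$, at the points $x_{\fraka} \ast \frakn(up^{-n})$.

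The core step is to identify these values with a $p$-adic Abel--Jacobi pairing. Because $f^{\flat}$ is $p$-depleted, $\theta^{-r}$ --- and hence $\theta^{j-r}$ --- is a genuine operator on $\calW\llbracket t - 1\rrbracket$, and by Katz's theory it agrees with the derivation attached to the unit-root splitting of the Gauss--Manin connection. I would use Coleman integration to realize $\theta^{j-r}\widehat{f}^{\flat}$, evaluated at a CM point, as the image of a generalized Heegner cycle under the $p$-adic Abel--Jacobi map (equivalently, a rigid-syntomic regulator) on a product of the Kuga--Sato variety $W_{2r-2}$ with a power of the CM elliptic curve $A$ of Section \ref{sec:defHeeg}, and then pass through Scholl's realization \cite{Sch90} of $V_{f}$ as a quotient of $H^{2r-1}_{\et}({W_{2r-2}}_{/\overline{\Q}}, \overline{\Q}_{p})$ together with the compatibility of the syntomic regulator with the Bloch--Kato exponential, so that the sum over $[\fraka]$ and $u$ collapses to $\langle \log_{\frakp}(z_{f,\chi}),\, \omega_{f} \otimes \omega_{A}^{r-1+j}\eta_{A}^{r-1-j}t^{1-2r}\rangle$. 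The factorial $(r-1+j)!$ will appear because the relevant cohomology class is produced by applying a Maass--Shimura-type operator $(r-1+j)$ times to a weight-two object, and the $p$-adic period $\Omega_{p}^{-2j}$ will enter through the comparison of the canonical trivialization $\widehat{\eta}_{p}$ of $\widehat{A}$ with the algebraic differential $\omega_{A}$. Finally I would collect the remaining constants --- the Gauss sum $\mathfrak{g}(\phi_{\frakp}^{-1})$, the powers of $\sqrt{-D_K}$ coming from the normalization $\widehat{f_{\fraka}^{\flat}}(t_{\fraka}) = \widehat{f^{\flat}}(t_{\fraka}^{\mathrm{N}(\fraka)^{-1}\sqrt{-D_K}^{-1}})$ and the choice of $\vartheta$ in Section \ref{sec:CM}, and the $p$-power factors from Proposition 3.3 and from the local behaviour of $\chi$ at $\frakp$ --- to arrive at the displayed identity, with the precise multiplicative constant as corrected in \cite[Theorem 6.4]{Mag22}.

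The hard part will be exactly this second step: showing that $\theta^{-r}\widehat{f}^{\flat}$ evaluated at CM points computes the Bloch--Kato logarithm of $z_{f,\chi}$. It rests on Coleman's $p$-adic integration, the comparison of the rigid-syntomic regulator with $\exp_{\mathrm{BK}}$, the Galois-equivariant descent from the Kuga--Sato motive to $V_{f}$ due to Scholl, and the period comparison for the CM curve $A$; and the extension from the unramified characters of \cite[Theorem 5.13]{BDP13} to characters of conductor $p^{n}$ with $n \geq 1$ is precisely what forces the twisted points $x_{\fraka} \ast \frakn(up^{-n})$ and the accompanying Gauss-sum analysis carried out in \cite{CH18}. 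Since all of this is already available, in practice I would simply invoke \cite[Theorem 4.9]{CH18} together with \cite[Theorem 6.4]{Mag22}.
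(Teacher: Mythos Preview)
Your proposal is correct and matches the paper's approach exactly: the paper does not prove this theorem but simply records it as a quotation of \cite[Theorem 4.9]{CH18} with the constant corrected by \cite[Theorem 6.4]{Mag22}, which is precisely what you conclude in your final sentence. Your preceding sketch of the underlying argument is accurate and more informative than what the paper provides, but it is not needed here.
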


In a similar manner to \cite[Corollary 6.3]{Bur17}, we would like to understand the $p$-adic valuation of $\langle \log_{\frakp} (z_{f, \chi}), \omega_f \otimes \omega_{A}^{r - 1 + j} \eta_A^{r - 1 - j} t^{1 -2r} \rangle$. It follows from Theorem \ref{thm: Castella-Hsieh} that we have an inequality
\[v_p\left(\langle \log_{\frakp} (z_{f, \chi}), \omega_f \otimes \omega_{A}^{r - 1 + j} \eta_A^{r - 1 - j} t^{1 -2r} \rangle \right) \geq n \left (j + r - \frac{1}{2}  - v_p(\chi_\frakp^{-1}(p)) \right) \] 
for every anticyclotomic Hecke character $\phi$ of conductor $p^n$ and infinity type $(j, - j)$ with $-r < j < r$. Here we used the fact that $v_p(\frakg(\phi_\frakp^{-1})) = n/2$ for $n \geq 0$. Under the  extra conditions
\[\begin{cases} 
	\text{the level $N$ is square free} \\
	\overline{\rho}_f \text{ is absolutely irreducible},\\ 
\end{cases}\]
the $\mu$-invariant $\mu(\scrL_{\frakp}(f))$ vanishes \cite[Theorem 5.7]{Bur17} (see also \cite[Theorem B]{Hsi14} for the same statement under slightly different hypotheses) and there is an asymptotic formula:
\[\liminf_{\widehat{\phi}^{-1} \in \mathfrak{X}_{p^{\infty}}}  v_p\left(\langle \log_{\frakp} (z_{f, \chi}), \omega_f \otimes \omega_{A}^{r - 1 + j} \eta_A^{r - 1 - j} t^{1 -2r} \rangle \right) - n \left (r + j - \frac{1}{2} -  v_p(\chi_\frakp^{-1}(p)) \right) = 0,\]
where $p^n$ is the conductor of $\phi$.

We now recall the set-up of Section \ref{sec:cong}. Let $f_1 \in S_{2r_1}(\Gamma_0(N_1))^\new, f_2 \in S_{2r_2}(\Gamma_0(N_2))^\new$ be normalized Hecke eigenforms satisfying hypothesis \eqref{Hyp}. Moreover, suppose that $\chi = \widehat{\phi} \in \mathfrak{X}_{p^{\infty}}$ is the $p$-adic avatar of an anticyclotomic Hecke character of infinity type $(j, -j)$ with $-r < j < r$ and conductor $p^n \mathcal{O}_K$ with $n \geq 1$. Let $F$ be a finite extension of $\Q_p$ containing the Hecke eigenvalues of $f_1$ and $f_2$ as well as the values of $\phi$, and let $\calW$ be the ring of integers of the compositum $F \cdot \widehat{\Q_p^{nr}}$. The following Theorem directly follows from Theorem \ref{thm:main} and Theorem \ref{thm: Castella-Hsieh}.
\begin{thm} Suppose that $f_1, f_2$ induce isomorphic semi-simplified $\mod{\varpi}^m$ Galois representations: $\bar{\rho}_{f_1}, \bar{\rho}_{f_2}: G_\Q \rightarrow \GL_2(\calO_F/\varpi^m\calO_F)$, where $\varpi$ is the uniformizer of $\calW$ and $\mu(\scrL_\frakp(f_1)) = \mu(\scrL_\frakp(f_2)) = 0$. Then 
	\begin{align*}
		v_p (\langle \prod_{\ell \mid N_1 N_2} \scrP_{\vbar}(f_1) (\chi^{-1}) \log_{\frakp} (z_{f_1, \chi}), \omega_f \otimes \omega_{A}^{r - 1 + j} \eta_A^{r - 1 - j} t^{1 -2r} \rangle \\ -  \langle \prod_{\ell \mid N_1 N_2} \scrP_{\vbar}(f_2) (\chi^{-1}) \log_{\frakp} (z_{f_2, \chi}),  \omega_f \otimes \omega_{A}^{r - 1 + j} \eta_A^{r - 1 - j} t^{1 -2r} \rangle) \\ \geq n \left(j + r - \frac{1}{2} - v_p(\chi_\frakp^{-1}(p)) \right) + v_p(\varpi^{m}),
	\end{align*}
	where $\scrP_{\overline{v}}(f_1), \scrP_{\overline{v}}(f_2)$ are defined in Definition \ref{defn:Euler}.
\end{thm}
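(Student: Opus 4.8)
The plan is to combine the $\varpi^m$-congruence of Theorem~\ref{thm:main} with the $p$-adic Gross--Zagier formula of Theorem~\ref{thm: Castella-Hsieh}. The latter writes each value as $\scrL_{\frakp}(f_i)(\chi^{-1})=c\cdot\langle\log_{\frakp}(z_{f_i,\chi}),\,\omega_{f_i}\otimes\omega_A^{r-1+j}\eta_A^{r-1-j}t^{1-2r}\rangle$ with
\[
c=\Omega_p^{-2j}\cdot\frac{\mathfrak{g}(\phi_{\frakp}^{-1})\,(\sqrt{-D_K})^{r+j}\,p^{n(-j-r)}\,\chi_{\frakp}^{-1}(p^n)}{(r-1+j)!},
\]
a scalar that depends only on $(\phi,K,r,j,n)$ and is therefore the \emph{same} for $i=1$ and $i=2$. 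So if I can transport the congruence of Theorem~\ref{thm:main} into a congruence between the numbers $\prod_{\ell}\scrP_{\vbar}(f_i)(\chi^{-1})\,\scrL_{\frakp}(f_i)(\chi^{-1})$, then dividing by $c$ and using bilinearity of the pairing will produce the asserted inequality with $-v_p(c)$ in place of the explicit main term.

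First I would re-establish the congruence of Theorem~\ref{thm:main} over the full semi-local algebra $\calW\llbracket\widetilde{\Gamma}\rrbracket$: repeated use of Theorem~\ref{l-dep-2} gives $\scrL_{\frakp}(f_i^{(N_1N_2)})=\bigl(\prod_{\ell\mid N_1N_2}\scrP_{\vbar}(f_i)\bigr)\scrL_{\frakp}(f_i)$, and since $f_1^{(N_1N_2)}\equiv f_2^{(N_1N_2)}\pmod{\varpi^m}$, Lemma~\ref{lem:cong} applied at level $N^{\sharp}$ gives
\[
\Bigl(\prod_{\ell\mid N_1N_2}\scrP_{\vbar}(f_1)\Bigr)\scrL_{\frakp}(f_1)\equiv\Bigl(\prod_{\ell\mid N_1N_2}\scrP_{\vbar}(f_2)\Bigr)\scrL_{\frakp}(f_2)\pmod{\varpi^m\calW\llbracket\widetilde{\Gamma}\rrbracket}.
\]
Specializing at the locally algebraic character $\chi^{-1}\colon\widetilde{\Gamma}\to\calO_{\C_p}^{\times}$, which extends to a continuous $\calW$-algebra homomorphism $\calW\llbracket\widetilde{\Gamma}\rrbracket\to\calO_{\C_p}$, then yields $v_p\bigl(\prod_{\ell}\scrP_{\vbar}(f_1)(\chi^{-1})\scrL_{\frakp}(f_1)(\chi^{-1})-\prod_{\ell}\scrP_{\vbar}(f_2)(\chi^{-1})\scrL_{\frakp}(f_2)(\chi^{-1})\bigr)\ge v_p(\varpi^m)$.

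It remains to compute $-v_p(c)$. Here $v_p(\Omega_p)=0$ because the canonical CM elliptic curve $A$ of Section~\ref{sec:defHeeg} has good ordinary reduction at the split prime $p$; $v_p\bigl((r-1+j)!\bigr)=0$ because $0\le r-1+j<2r-1$ and $p\nmid(2r-1)!$ by~\eqref{Hyp}; $v_p(\sqrt{-D_K})=0$ because $p$ is unramified in $K$; $v_p\bigl(\mathfrak{g}(\phi_{\frakp}^{-1})\bigr)=n/2$; and $v_p\bigl(p^{n(-j-r)}\chi_{\frakp}^{-1}(p^n)\bigr)=-n(j+r)+n\,v_p(\chi_{\frakp}^{-1}(p))$. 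Hence $-v_p(c)=n\bigl(j+r-\tfrac12-v_p(\chi_{\frakp}^{-1}(p))\bigr)$, and combining with the previous display gives the claimed bound, the difference in the statement being read with $\omega_{f_i}$ paired against the $i$-th summand.

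The only genuine subtlety is bookkeeping rather than conceptual. One must ensure the Gross--Zagier scalar $c$ truly coincides for $f_1$ and $f_2$; this forces $f_1$ and $f_2$ to share the weight $2r$, which is also the only case in which the pairing against $\omega_f\otimes\omega_A^{r-1+j}\eta_A^{r-1-j}t^{1-2r}$ in the statement is literally meaningful and in which the congruence of Theorem~\ref{thm:main} holds over $\calW\llbracket\widetilde{\Gamma}\rrbracket$ without an auxiliary quadratic twist (contrast the $\bigl(\tfrac{\cdot}{p}\bigr)$-twist that appears in Lemma~\ref{lem:cong} when the weights differ). Once the congruence is phrased over $\calW\llbracket\widetilde{\Gamma}\rrbracket$, specialization at the locally algebraic $\chi^{-1}$ — rather than at a character factoring through $\Gamma_K^{-}$ — is immediate; and the hypothesis $\mu(\scrL_{\frakp}(f_1))=\mu(\scrL_{\frakp}(f_2))=0$ is not actually used in this estimate, but only keeps us in the regime of Theorem~\ref{thm:main}.
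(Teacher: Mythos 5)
Your proposal is correct and takes essentially the same route the paper intends (the paper just declares the result a direct consequence of Theorem~\ref{thm:main} and Theorem~\ref{thm: Castella-Hsieh} without spelling out the steps). Your verification that the Gross--Zagier scalar $c$ is form-independent, the re-derivation of the congruence over the full $\calW\llbracket\widetilde{\Gamma}\rrbracket$ so that evaluation at the (possibly tamely ramified) $\chi^{-1}$ is legitimate, and the observation that this implicitly forces $r_1=r_2$ for both the pairing and the common scalar $c$ to be meaningful, are exactly the bookkeeping the paper leaves unstated; your $p$-adic valuation count for $c$ (good ordinary reduction so $v_p(\Omega_p)=0$, $p\nmid(2r-1)!$ so $v_p((r-1+j)!)=0$, $p$ unramified in $K$, $v_p(\mathfrak g(\phi_\frakp^{-1}))=n/2$) matches the inequality the paper records just before the theorem.
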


Let $X_0(N)$ be the modular curve of level $\Gamma_0(N)$ and let $J_0(N)$ be its Jacobian. For a Hecke eigen-newform $f \in S_{2}(\Gamma_0(N))^{\new}$ of weight $2$ satisfying Hypothesis \eqref{Hyp} and a finite character $\chi$ of conductor $p^n$, define
\[P(\chi^{-1}) :=  \sum_{\fraka \in \Cl{\calO_{p^n}}}  \chi(\fraka) ([(A_\fraka, A_\fraka[\frakN])] - [\infty]) \in J_0(N) \otimes \C_p\]
and let $P_f(\chi^{-1}) := \pi_f(P(\chi^{-1}))$ under the modular parametrization $\pi_f: J_0(N) \rightarrow A_f$. Moroever, let $\omega_{A_f} \in H^0(A_f, {\Omega}^1_{A_f})$ be the differential induced by $f(q) \frac{dq}{q} \in H^0(X_0(N), {\Omega}^1_{X_0(N)})$ under the Abel-Jacobi map $\iota: X_0(N) \hookrightarrow J_0(N)$ and the projection $\pi_f$.

In the case of weight $2$ forms, we obtain the following extension of \cite[Theorem 3.9]{KL19}:

\newpage
\begin{thm} \label{main-3-cor} Suppose that $f_1 \in S_{2}(\Gamma_0(N_1))^\new, f_2 \in S_{2}(\Gamma_0(N_2))^\new$ induce isomorphic semi-simplified $\mod{\varpi}^m$ Galois representations: $\bar{\rho}_{f_1}, \bar{\rho}_{f_2}: G_\Q \rightarrow \GL_2(\calO_F/\varpi^m\calO_F)$, where $\varpi$ is a uniformizer of $F$. Then
	\begin{align*}
		v_p (\langle \prod_{\ell \mid N_1 N_2} \scrP_{\vbar}(f_1) (\chi^{-1}) \log_{\omega_{A_{f_1}}} (P_{f_1}({\chi^{-1}})) - \prod_{\ell \mid N_1 N_2} \scrP_{\vbar}(f_2) (\chi^{-1}) \log_{\omega_{A_{f_2}}} (P_{f_2}({\chi^{-1}})) \rangle) \\ \geq \frac{n}{2} + v_p(\varpi^{m}).
	\end{align*}
\end{thm}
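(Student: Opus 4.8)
The plan is to reduce the statement to the weight-$2$, infinity-type-$(0,0)$ specialization of Theorem \ref{thm:main} together with Theorem \ref{thm: Castella-Hsieh}, and then to translate the generalized Heegner class into the classical Heegner point on $A_{f_i}$.

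First I would record the congruence. Applied with $r_1=r_2=1$, the congruence of Theorem \ref{thm:main} in fact holds modulo $\varpi^m\calW\llbracket\widetilde{\Gamma}\rrbracket$: when $r_1\equiv r_2$ the quadratic twist produced in the proof of Lemma \ref{lem:cong} is trivial, the identities of Theorem \ref{l-dep-2} are identities over $\calW\llbracket\widetilde{\Gamma}\rrbracket$ since $\gamma_{\vbar}\in\widetilde{\Gamma}$, and $\scrP_{\vbar}(f_i)\in\calW\llbracket\widetilde{\Gamma}\rrbracket$ by Definition \ref{defn:Euler}. Hence
\[\prod_{\ell\mid N_1N_2}\scrP_{\vbar}(f_1)\,\scrL_\frakp(f_1)\ \equiv\ \prod_{\ell\mid N_1N_2}\scrP_{\vbar}(f_2)\,\scrL_\frakp(f_2)\pmod{\varpi^m\calW\llbracket\widetilde{\Gamma}\rrbracket}.\]
Applying the $\calW$-algebra homomorphism $\calW\llbracket\widetilde{\Gamma}\rrbracket\to\calO_{\C_p}$ induced by $\chi^{-1}$, which carries $\varpi^m\calW\llbracket\widetilde{\Gamma}\rrbracket$ into $\varpi^m\calO_{\C_p}$, I obtain
\[\prod_{\ell\mid N_1N_2}\scrP_{\vbar}(f_1)(\chi^{-1})\,\scrL_\frakp(f_1)(\chi^{-1})\ \equiv\ \prod_{\ell\mid N_1N_2}\scrP_{\vbar}(f_2)(\chi^{-1})\,\scrL_\frakp(f_2)(\chi^{-1})\pmod{\varpi^m\calO_{\C_p}}.\]

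Next I would feed this into Theorem \ref{thm: Castella-Hsieh} with $r=1$ and $j=0$, so that $\Omega_p^{-2j}=1$, $(r-1+j)!=1$ and $\omega_A^{r-1+j}\eta_A^{r-1-j}t^{1-2r}=\omega_A^0\eta_A^0 t^{-1}$. For each $i$ this reads $\scrL_\frakp(f_i)(\chi^{-1})=C\cdot L_i$, where $L_i:=\langle\log_\frakp(z_{f_i,\chi}),\,\omega_{f_i}\otimes\omega_A^0\eta_A^0 t^{-1}\rangle$ and $C:=\mathfrak{g}(\phi_\frakp^{-1})\sqrt{-D_K}\,p^{-n}\chi_\frakp^{-1}(p^n)$ does not depend on $i$. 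Since $\phi$ has infinity type $(0,0)$ its $p$-adic avatar is of finite order, so $\chi_\frakp^{-1}(p^n)$ is a root of unity; as $p$ splits in $K$ we have $p\nmid D_K$; and $v_p(\mathfrak{g}(\phi_\frakp^{-1}))=n/2$. Hence $v_p(C)=n/2-n=-n/2$, so $v_p(C^{-1})=n/2$. Multiplying the previous congruence through by $C^{-1}$ and using $\scrL_\frakp(f_i)(\chi^{-1})=C\cdot L_i$ gives
\[\prod_{\ell\mid N_1N_2}\scrP_{\vbar}(f_1)(\chi^{-1})\,L_1\ -\ \prod_{\ell\mid N_1N_2}\scrP_{\vbar}(f_2)(\chi^{-1})\,L_2\ \in\ C^{-1}\varpi^m\calO_{\C_p},\]
an element of $p$-adic valuation at least $n/2+v_p(\varpi^m)$.

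Finally I would invoke the weight-$2$ dictionary to identify $L_i$ with $\log_{\omega_{A_{f_i}}}(P_{f_i}(\chi^{-1}))$: when $\chi$ has infinity type $(0,0)$ the Kuga--Sato variety $W_{2r-2}$ degenerates to a modular curve, the generalized Heegner class $z_{f_i,\chi}$ of \cite[Section 4.5]{CH18} is the class of $\pi_{f_i}\big(P(\chi^{-1})\big)=P_{f_i}(\chi^{-1})$, and the Bloch--Kato logarithm paired against $\omega_{f_i}\otimes\omega_A^0\eta_A^0 t^{-1}$ recovers the formal-group logarithm $\log_{\omega_{A_{f_i}}}$; this is precisely the identification underlying \cite[Theorem 3.9]{KL19} and the weight-$2$ case of \cite[Theorem 5.13]{BDP13}. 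Substituting it into the last display gives the asserted inequality. I expect this last identification to be the main point requiring care: one must check that the normalizations of $z_{f,\chi}$, $\omega_f$ and Fontaine's period $t$ entering Theorem \ref{thm: Castella-Hsieh} match those of $P_f(\chi^{-1})$ and $\log_{\omega_{A_f}}$ on the nose -- not merely up to a $p$-adic unit, which could a priori differ between $f_1$ and $f_2$ -- and this is carried out by unwinding the constructions of \cite{CH18,BDP13} in weight $2$. A subsidiary point, used in the first step, is that for $r_1=r_2=1$ the congruence of Theorem \ref{thm:main} persists over $\calW\llbracket\widetilde{\Gamma}\rrbracket$, so that it may legitimately be evaluated at $\chi^{-1}$, a character that is ramified at $p$ and need not factor through $\Gamma_K^-$.
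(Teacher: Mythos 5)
Your proof is correct and follows the paper's (implicit) route: specialize the congruence of Theorem \ref{thm:main} and the Gross--Zagier formula of Theorem \ref{thm: Castella-Hsieh} to $r=1$, $j=0$, then translate the BDP pairing into the formal-group logarithm of the Heegner point on $A_{f_i}$. Your observation that for $r_1=r_2$ the congruence already holds over $\calW\llbracket\widetilde{\Gamma}\rrbracket$ rather than merely $\calW\llbracket\Gamma_K^-\rrbracket$ --- needed because $\chi$ need not factor through $\Gamma_K^-$ --- is a correct refinement that the paper glosses over when it evaluates the congruence at $\chi^{-1}$.
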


\section{Applications to the Iwasawa Main Conjecture for the BDP Selmer group}
\label{sec:IMC}
In this section, we recall the definition of the Bertolini-Darmon-Prasanna (BDP) Selmer group and the corresponding Iwasawa Main Conjecture, which is equivalent to the Heegner Point Main Conjecture formulated by Perrin-Riou \cite{P-R87}. We will see that the Iwasawa Main Conjecture propagates in a family of modular forms with isomorphic semi-simplified residual representations.

We give the following definitions based on \cite[Definitions 2.1, 2.2]{Cas17}, \cite[p.98]{Gre99}. Assume that $f \in S_{2r}(\Gamma_0(N))^{\new}$ is ordinary at $p$, i.e. $a_p(f) \in \Z_p^\times$. Recall the $p$-adic representation $V = V_f(r)$ of $\Gal(\Qbar_p/\Q_p)$ attached to $f$. There exists a $\Gal(\Qbar_p/ \Q_p)$-stable filtration
$$0 \rightarrow \scrF^+ V \rightarrow V \rightarrow \scrF^- V \rightarrow 0$$

where $\scrF^+V$ and $\scrF^-V$ are both $1$-dimensional representations. Let $T$ be a $G_\Q$-stable lattice in $V$ and let $A = V/T$. We also define $\scrF^+ T = T \cap \scrF^+ V$, $\scrF^- T = T/ \scrF^+ T$, and $\scrF^+ A = \scrF^+ V/ \scrF^+ T$, $\scrF^- A = A/ \scrF^+ A$.  

To define the BDP Selmer group, we recall the following local conditions above $p$, where $M$ is $A, V$ or $T$. Let $L/K$ be a finite extension of number fields, and let $v$ be a prime of $L$. 

\begin{defn}
	The Greenberg local condition is defined as 
	$$H^1_{\Gr}(L_v, M) := \begin{cases}
		\ker \left(H^1(L_v, M) \rightarrow H^1(L_v^{nr}, \scrF^- M)\right) & \text{if } v \mid p,\\
		\ker \left(H^1(L_v, M) \rightarrow H^1(L_v^{nr}, M) \right) & \text{if otherwise}.
	\end{cases}$$
\end{defn}

\begin{defn}
	For $v \mid p$ and $\scrL_v \in \{\emptyset, \Gr, 0\}$, set
	$$H^1_{\scrL_v}(L_v, M):= \begin{cases}
		H^1(L_v, M) & \text{if } \scrL_v = \emptyset, \\
		H^1_{\Gr}(L_v, M) & \text{if } \scrL_v = \Gr, \\
		\{0\} & \text{if } \scrL_v = 0. 
	\end{cases}$$
\end{defn} 

Let $\Sigma$ be a finite set of primes of $K$ dividing the primes where $V$ is ramified as well as the primes dividing $p\infty$. We will denote by $L_\Sigma$ the maximal extension of $L$ unramified outside of the set of primes dividing the primes in $\Sigma$. 

\begin{defn} For a set of local conditions $\scrL  = \{\scrL_v\}_{v \mid p}$, we define
	\[\Sel_{\scrL} (L, M) = \ker \left(H^1(L_\Sigma/L, M) \rightarrow \prod_{v \nmid p} \frac{H^1(L_v, M)}{H^1_{\Gr}(L_v, M)} \times \prod_{v \mid p} \frac{H^1(L_v, M)}{H^1_{\scrL_v}(L_v, M)} \right).\]
\end{defn}

We abbreviate the Iwasawa algebras as $\Lambda := \Z_p \llbracket \Gamma_K^{-} \rrbracket$, $\Lambda^{\ur} := \calW \llbracket \Gamma_K^{-} \rrbracket$ and define $\mathbf{T} := T \otimes \Lambda$, and $\mathbf{A} := A \otimes \Lambda^\ast$, where $\Lambda^\ast$ is the Pontryagin dual of $\Lambda$.

Observe that there are isomorphisms $$\Sel_{\scrL}(K_\infty, \boldA) \simeq \varinjlim_{K \subset L \subset K_\infty} \Sel_{\scrL}(L, A), \text{          } \Sel_{\scrL}(K_\infty, \boldT) \simeq \varprojlim_{K \subset L \subset K_\infty} \Sel_{\scrL}(L, T)$$ with compatible local conditions $\scrL$. We will also denote by $X_{\scrL}(K, \boldA)$ the Pontryagin dual of $\Sel_{\scrL}(K, \boldA)$. 

The BDP Selmer group is defined as $\Sel_{\emptyset, 0}(K, \boldA)$, and we will denote its dual by $X_\frakp(K, \boldA): = X_{\emptyset, 0}(K, \boldA)$. One can formulate the following Iwasawa main conjecture \cite[Conjecture 2.4.7]{Cas13}:

\begin{conj}[Iwasawa main conjecture] \label{conj:IMC} The BDP Selmer group
	$X_{\frakp} (K, \mathbf{A})$ is $\Z_p \llbracket \Gamma_K^- \rrbracket$-cotorsion, and 
	$$\Char(X_{\frakp}(K, \mathbf{A})) \otimes_{\Z_p\llbracket \Gamma_K^{-} \rrbracket} \calW \llbracket \Gamma_K^{-} \rrbracket = (\scrL_{\frakp}(f)^2)$$
	as ideals in $\calW \llbracket \Gamma_K^{-} \rrbracket$, where $\Char(X_{\frakp}(K, \mathbf{A}))$ is the characteristic ideal of $X_{\frakp}(K, \mathbf{A})$. 
\end{conj}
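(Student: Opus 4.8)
Conjecture \ref{conj:IMC} is, as the excerpt notes, equivalent to Perrin--Riou's Heegner point main conjecture \cite{P-R87} for the anticyclotomic tower, so the plan is to (a) make that equivalence precise and then (b) prove the Heegner point main conjecture by a two-sided attack on characteristic ideals. For step (a): interpolating the generalized Heegner classes $z_{f,\chi}$ of Section \ref{sec:defHeeg} over the ring class tower produces a $\Lambda$-adic class $\mathbf{z}_f \in H^1(K,\mathbf{T})$ (as in \cite[Section 4]{CH18} and \cite{Cas17}); ordinariness of the underlying CM points shows that $\mathbf{z}_f$ satisfies the Greenberg local condition away from one of the two primes above $p$, while at that prime the $p$-adic Gross--Zagier formula of Theorem \ref{thm: Castella-Hsieh}, interpolated over $\frakX_{p^\infty}$, identifies a Perrin--Riou big logarithm of its localisation there (projected to $\scrF^-\mathbf{T}$) with $\scrL_\frakp(f)$ up to a unit of $\Lambda^{\ur}$. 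A Poitou--Tate comparison of $X_\frakp(K,\mathbf{A}) = X_{\emptyset, 0}(K,\mathbf{A})$ with the Greenberg Selmer dual $X_{\Gr}(K,\mathbf{A})$ --- the two differing only in the local conditions at $p$ --- then converts the Heegner point main conjecture for $X_{\Gr}(K,\mathbf{A})$ into Conjecture \ref{conj:IMC}; along the way one verifies that $H^1(K,\mathbf{T})$ has $\Lambda$-rank one and that $\mathbf{z}_f$ is not $\Lambda$-torsion, using the nonvanishing statement in Definition \ref{defn:L-func}.

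For step (b), the divisibility making the Selmer group small --- that $\Char_\Lambda$ of the $\Lambda$-torsion part of $X_{\Gr}(K,\mathbf{A})$ divides $\Char_\Lambda(H^1(K,\mathbf{T})/\Lambda\mathbf{z}_f)^2$, and that $X_\frakp(K,\mathbf{A})$ is $\Lambda$-cotorsion --- I would obtain by running Kolyvagin's descent in its $\Lambda$-adic form: Howard's method, extended to generalized Heegner cycles of weight $2r$ by Castella--Hsieh (and, for the Kolyvagin-system formalism, Longo--Vigni), produces derived classes out of $\mathbf{z}_f$ that bound $X_{\Gr}(K,\mathbf{A})$. The opposite divisibility, forcing the Selmer group to be large, is available for $r = 1$ from Eisenstein-congruence methods --- the Skinner--Urban circle of ideas and Wan's proof of the Iwasawa main conjecture for Rankin--Selberg $p$-adic $L$-functions, which specialises to Conjecture \ref{conj:IMC} in weight $2$ --- or, alternatively, from the work of Burungale--Castella--Kim on Perrin--Riou's conjecture. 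Combining the two divisibilities yields the Heegner point main conjecture for $X_{\Gr}(K,\mathbf{A})$, and hence Conjecture \ref{conj:IMC} as an identity of ideals in $\Lambda^{\ur}$.

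The principal obstacle is the ``Selmer group large'' divisibility when $r > 1$: the Eisenstein-congruence input and the Rankin--Selberg main conjecture that supply it in weight $2$ have no published higher-weight counterpart, so a proof of Conjecture \ref{conj:IMC} in full generality would require developing one --- for instance an anticyclotomic main conjecture for the Rankin--Selberg convolution of $f$ with a weight-one theta series attached to $K$, in the style of Wan, or an Eisenstein-ideal argument inside the relevant Hida family. Short of that, the strategy above is unconditional apart from the single divisibility $(\scrL_\frakp(f)^2) \mid \Char(X_\frakp(K,\mathbf{A}))$, and is complete when $r = 1$. As a closing remark, once Conjecture \ref{conj:IMC} is known for one member of a residually isomorphic family, the comparison of $\lambda$- and $\mu$-invariants in Theorem \ref{thm:main}, combined with the ``Selmer group small'' divisibility just discussed, propagates the full equality of characteristic ideals to every member, so in practice it suffices to verify the conjecture for a single conveniently chosen form.
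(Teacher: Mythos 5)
The statement you were asked about is not a theorem of the paper: it is stated as a \emph{conjecture} (following \cite[Conjecture 2.4.7]{Cas13}), and the paper offers no proof of it. Its only role in the paper is as a hypothesis and a target of the propagation result, Theorem \ref{thm:IMC}, which assumes Conjecture \ref{conj:IMC} for one form $f_1$ (together with $\mu_{\anal}(f_1)=\mu_{\alg}(f_1)=0$ and one divisibility for $f_2$) and transfers it to a congruent form $f_2$ via the comparison of $\mu$- and $\lambda$-invariants from Theorem \ref{thm:main} and \cite{LMX23}. So there is no proof in the paper against which your argument can be matched; your closing remark about propagating the conjecture through a residually isomorphic family is the only part that corresponds to something the paper actually proves, and even there the paper still needs the divisibility $\scrL_\frakp(f_2)^2\in\Char_\Lambda(X_\frakp(K,\boldA_2))$ and the vanishing $H^0(K_w,A_i)=0$ as hypotheses rather than deducing them.

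As a proof of the conjecture itself, your write-up has a genuine and decisive gap, which you yourself flag: the ``Selmer group large'' divisibility $(\scrL_\frakp(f)^2)\subseteq\Char(X_\frakp(K,\mathbf{A}))\otimes\Lambda^{\ur}$ is not available in the generality of the statement. Even in weight $2$ the results you invoke (Wan's Rankin--Selberg main conjecture, Burungale--Castella--Kim) carry substantial hypotheses --- $p$-ordinarity, conditions on the residual representation (irreducibility or, in the Eisenstein case of \cite{CGLS22}, a different argument altogether), ramification and level conditions --- none of which appear in the statement, so ``complete when $r=1$'' overstates what is known. Similarly, step (a) is not free: the equivalence with Perrin--Riou's Heegner point main conjecture and the identification of the big-logarithm image of $\mathbf{z}_f$ with $\scrL_\frakp(f)$ up to a unit require an explicit reciprocity law and control of local conditions that themselves impose hypotheses, and the $\Lambda$-rank-one and non-torsion claims do not follow from the nonvanishing of $\scrL_{\frakp,\psi}(f)$ alone without that reciprocity input. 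In short: what you have written is a reasonable survey of the known strategy toward Conjecture \ref{conj:IMC}, but it is not a proof, and the paper never claims one --- it only proves that the conjecture, once known for one member of a congruence class (with $\mu=0$), propagates to the others.
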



We also remark that Conjecture \ref{conj:IMC} is equivalent to Perrin-Riou's Heegner Point Main Conjecture for forms corresponding to elliptic curves. For more details on this equivalence, we refer readers to \cite{BCK21, P-R87}.

Denote by $\mu_\anal(f)$ and $\lambda_\anal(f)$ the $\mu$ and $\lambda$-invariants of $\scrL_\frakp(f)$, respectively. Moreover, let $\mu_\alg(f) := \mu(X_\frakp(K, \boldA))$ and $\lambda_{\alg}(f) := \lambda(X_{\frakp}(K, \boldA))$ be the algebraic $\mu$ and $\lambda$-invariants of $X_{\frakp}(K, \boldA)$.  Combining our results with the work of Lei-Mueller-Xia \cite{LMX23}, we obtain the following:

\begin{thm} \label{thm:IMC}
	Let $f_1 \in S_{2r_1}(\Gamma_0(N_1))^{\new}$ be a newform that satisfies Conjecture \ref{conj:IMC} and assume $\mu_{\anal}(f_1) = \mu_{\alg}(f_1) = 0$. 
	Suppose that $f_2 \in S_{2r_2}(\Gamma_0(N_2))^{\new}$ is a newform that satisfies the divisibility
	\[\scrL_{\frakp}(f_2)^2 \in \Char_{\Lambda}(X_{\frakp}(K, \boldA_2)),\]
	where $X_\frakp(K, \boldA_i)$ is the dual BDP Selmer group for $f_i$, $i \in \{1, 2\}$. Further suppose that $\overline{\rho}_{f_1} \simeq \overline{\rho}_{f_2} \pmod{\varpi}$ and $H^0(K_w, A_i) = 0$ for every $w \mid p$ and $i \in \{1, 2\}$. Then $\mu_\anal(f_2) = \mu_{\alg} (f_2) = 0$ and Conjecture \ref{conj:IMC} also holds for $f_2$.
\end{thm}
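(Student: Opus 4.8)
The plan is to run the argument of \cite[Theorem (1.4)]{GV00} in the anticyclotomic BDP setting, feeding in our Theorem \ref{thm:main} on the analytic side and the Greenberg--Vatsal-type comparison of algebraic Iwasawa invariants of \cite{LMX23} on the Selmer side. Throughout, $f_1$ and $f_2$ are understood to be ordinary at $p$ and to satisfy Hypothesis \eqref{Hyp} (this is implicit, so that $\scrL_\frakp(f_i)$ and the BDP Selmer groups are defined), and we use the case $m = 1$ of the residual isomorphism, $\overline{\rho}_{f_1} \simeq \overline{\rho}_{f_2} \pmod{\varpi}$. We also freely use that $\mu$- and $\lambda$-invariants, and the formation of characteristic ideals, are compatible with the unramified base change $\Lambda = \Z_p \llbracket \Gamma_K^{-} \rrbracket \hookrightarrow \Lambda^{\ur} = \calW \llbracket \Gamma_K^{-} \rrbracket \simeq \calW \llbracket T \rrbracket$.

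First I would dispose of the $\mu$-invariants. By Theorem \ref{thm:main}(1) with $m = 1$, the assumption $\mu_\anal(f_1) = 0$ forces $\mu_\anal(f_2) = \mu(\scrL_\frakp(f_2)) = 0$. Since $\scrL_\frakp(f_2) \neq 0$ by the non-vanishing theorem \cite[Theorem 3.9]{CH18}, the hypothesis $\scrL_\frakp(f_2)^2 \in \Char_\Lambda(X_\frakp(K, \boldA_2))$ (an inclusion to be read in $\Lambda^{\ur}$ after base change) forces $X_\frakp(K, \boldA_2)$ to be $\Lambda$-torsion and gives the divisibility $\Char(X_\frakp(K, \boldA_2)) \otimes_\Lambda \Lambda^{\ur} \mid \scrL_\frakp(f_2)^2$ in $\calW \llbracket T \rrbracket$. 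Comparing $\mu$-invariants of this divisibility yields $\mu_\alg(f_2) \leq 2 \mu_\anal(f_2) = 0$, so $\mu_\alg(f_2) = 0$.

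For the $\lambda$-invariants, set $P_i := \sum_{\ell \mid N_1 N_2} \lambda(\scrP_{\vbar}(f_i))$, which is finite by Lemma \ref{lem:mu=0}. Since the analytic $\mu$-invariants of $f_1$ and $f_2$ vanish, Theorem \ref{thm:main}(2) gives $P_1 + \lambda_\anal(f_1) = P_2 + \lambda_\anal(f_2)$. On the algebraic side, the congruence $\overline{\rho}_{f_1} \simeq \overline{\rho}_{f_2} \pmod{\varpi}$ together with $H^0(K_w, A_i) = 0$ for $w \mid p$ lets one compare the residual $\Sigma_0$-imprimitive dual BDP Selmer groups of $f_1$ and $f_2$ (obtained by dropping the local conditions at the set $\Sigma_0$ of primes of $K$ above $N_1 N_2$), in the manner of \cite{LMX23}; combined with the Euler-factor identity $\Char(X_\frakp^{\Sigma_0}(K, \boldA_i)) \otimes_\Lambda \Lambda^{\ur} = \big( \prod_{\ell \mid N_1 N_2} \scrP_{\vbar}(f_i)^2 \big) \Char(X_\frakp(K, \boldA_i)) \otimes_\Lambda \Lambda^{\ur}$ (the square reflecting that the main conjecture pairs $\Char X_\frakp$ with $\scrL_\frakp^2$, consistently with the analytic identity $\scrL_\frakp(f^{(N_1 N_2)}) = \prod_\ell \scrP_{\vbar}(f) \scrL_\frakp(f)$ of Theorem \ref{l-dep-2}) and with $\mu_\alg(f_i) = 0 = \mu(\scrP_{\vbar}(f_i))$, this produces $2 P_1 + \lambda_\alg(f_1) = 2 P_2 + \lambda_\alg(f_2)$. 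Since Conjecture \ref{conj:IMC} for $f_1$ gives $\lambda_\alg(f_1) = 2 \lambda_\anal(f_1)$, combining with twice the analytic identity yields $\lambda_\alg(f_2) = 2 \lambda_\anal(f_2)$.

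To conclude, we have $\mu_\alg(f_2) = 0 = \mu(\scrL_\frakp(f_2)^2)$ and $\lambda_\alg(f_2) = \lambda(\scrL_\frakp(f_2)^2)$, together with the divisibility $\Char(X_\frakp(K, \boldA_2)) \otimes_\Lambda \Lambda^{\ur} \mid \scrL_\frakp(f_2)^2$ in the unique factorization domain $\calW \llbracket T \rrbracket$; a divisor of $\scrL_\frakp(f_2)^2$ with equal $\mu$- and $\lambda$-invariant is an associate of it by Weierstrass preparation, so $\Char(X_\frakp(K, \boldA_2)) \otimes_\Lambda \Lambda^{\ur} = (\scrL_\frakp(f_2)^2)$, which together with the $\Lambda$-torsionness of $X_\frakp(K, \boldA_2)$ is exactly Conjecture \ref{conj:IMC} for $f_2$; and $\mu_\anal(f_2) = \mu_\alg(f_2) = 0$ has been established along the way. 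The main obstacle is the algebraic $\lambda$-step: controlling how the dual BDP Selmer group varies in a residually isomorphic family of a priori different weights, where the role of $H^0(K_w, A_i) = 0$ is to force the local terms at $p$ in the global Euler-characteristic comparison to agree, so that the entire discrepancy in $\lambda_\alg$ is accounted for by the imprimitive local conditions at the primes $\ell \mid N_1 N_2$, i.e. by the factors of Definition \ref{defn:Euler}. For forms attached to elliptic curves this is \cite{LMX23}; in general one must check that their Selmer-group analysis carries over to the generalized Heegner cycle setting, which is routine once those local conditions are identified with the factors $\scrP_{\vbar}$.
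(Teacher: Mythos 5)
Your argument is correct and takes essentially the same route as the paper: Theorem~\ref{thm:main}(2) supplies the analytic $\lambda$-identity, \cite[Corollary 3.8]{LMX23} supplies the matching algebraic identity with the doubled Euler-factor contribution, and subtracting the two together with the Main Conjecture for $f_1$ and the one-sided divisibility for $f_2$ closes the loop by Weierstrass preparation (your derivation of $\mu_{\alg}(f_2)=0$ directly from the divisibility hypothesis is a minor streamlining of the paper's reliance on \cite{LMX23} at that step, but it is the same in substance). One remark worth making: where you write $\lambda_{\alg}(f_1)=2\lambda_{\anal}(f_1)$ to reflect the square $\scrL_{\frakp}(f)^2$ in Conjecture~\ref{conj:IMC}, the paper's proof states this as $\lambda_{\anal}(f_1)=\lambda_{\alg}(f_1)$ without the factor of $2$; the surrounding algebra in~(\ref{eqn:lambda}) and~(\ref{eqn:lambda'}) is consistent with your reading, so your version in fact corrects a small slip in the published proof.
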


\begin{proof}
	Under these hypotheses, Theorem \ref{thm:main} and \cite[Corollary 3.8]{LMX23} imply that $\mu_{\alg}(f_2) = \mu_{\anal}(f_2) = 0.$
	
	Moreover, we also have
	\begin{equation} \label{eqn:lambda}
		2 \lambda(\scrL_{\frakp}(f_1)) + 2 \sum_{\ell \mid N_1 N_2} \lambda(\scrP_{\vbar}(f_1)) = 2 \lambda(\scrL_{\frakp}(f_2)) + 2 \sum_{\ell \mid N_1 N_2} \lambda(\scrP_{\vbar}(f_2))
	\end{equation}
	for any splitting $\ell = v \overline{v}$ in $K$ of the primes $\ell \mid N_1 N_2$. For $i \in \{1, 2\}$, each $\scrP_{\overline{v}}(f_i)$ is defined in Definition \ref{defn:Euler}. 
	
	By \cite[Corollary 3.8]{LMX23}, one also has
	\begin{equation} \label{eqn:lambda'}
		\lambda(X_\frakp(K, \boldA_1)) + 2 \sum_{\ell \mid N_1 N_2} \lambda(\scrP_{\vbar}(f_1)) = \lambda(X_\frakp(K, \boldA_2)) +  2 \sum_{\ell \mid N_1 N_2} \lambda(\scrP_{\vbar}(f_2)).	
	\end{equation}		Conjecture \ref{conj:IMC} for $f_1$ gives $\lambda_\anal(f_1) = \lambda_\alg(f_1)$. The equalities (\ref{eqn:lambda}) and (\ref{eqn:lambda'}) together imply that $\lambda_{\anal}(f_2) = \lambda_{\alg}(f_2)$. Combined with the divisibility for $f_2$, we conclude that Conjecture \ref{conj:IMC} also holds for $f_2$. 
\end{proof}


\bibliography{sn-bibliography}


\begin{thebibliography}{30}
\ifx \bisbn   \undefined \def \bisbn  #1{ISBN #1}\fi
\ifx \binits  \undefined \def \binits#1{#1}\fi
\ifx \bauthor  \undefined \def \bauthor#1{#1}\fi
\ifx \batitle  \undefined \def \batitle#1{#1}\fi
\ifx \bjtitle  \undefined \def \bjtitle#1{#1}\fi
\ifx \bvolume  \undefined \def \bvolume#1{\textbf{#1}}\fi
\ifx \byear  \undefined \def \byear#1{#1}\fi
\ifx \bissue  \undefined \def \bissue#1{#1}\fi
\ifx \bfpage  \undefined \def \bfpage#1{#1}\fi
\ifx \blpage  \undefined \def \blpage #1{#1}\fi
\ifx \burl  \undefined \def \burl#1{\textsf{#1}}\fi
\ifx \doiurl  \undefined \def \doiurl#1{\url{https://doi.org/#1}}\fi
\ifx \betal  \undefined \def \betal{\textit{et al.}}\fi
\ifx \binstitute  \undefined \def \binstitute#1{#1}\fi
\ifx \binstitutionaled  \undefined \def \binstitutionaled#1{#1}\fi
\ifx \bctitle  \undefined \def \bctitle#1{#1}\fi
\ifx \beditor  \undefined \def \beditor#1{#1}\fi
\ifx \bpublisher  \undefined \def \bpublisher#1{#1}\fi
\ifx \bbtitle  \undefined \def \bbtitle#1{#1}\fi
\ifx \bedition  \undefined \def \bedition#1{#1}\fi
\ifx \bseriesno  \undefined \def \bseriesno#1{#1}\fi
\ifx \blocation  \undefined \def \blocation#1{#1}\fi
\ifx \bsertitle  \undefined \def \bsertitle#1{#1}\fi
\ifx \bsnm \undefined \def \bsnm#1{#1}\fi
\ifx \bsuffix \undefined \def \bsuffix#1{#1}\fi
\ifx \bparticle \undefined \def \bparticle#1{#1}\fi
\ifx \barticle \undefined \def \barticle#1{#1}\fi
\bibcommenthead
\ifx \bconfdate \undefined \def \bconfdate #1{#1}\fi
\ifx \botherref \undefined \def \botherref #1{#1}\fi
\ifx \url \undefined \def \url#1{\textsf{#1}}\fi
\ifx \bchapter \undefined \def \bchapter#1{#1}\fi
\ifx \bbook \undefined \def \bbook#1{#1}\fi
\ifx \bcomment \undefined \def \bcomment#1{#1}\fi
\ifx \oauthor \undefined \def \oauthor#1{#1}\fi
\ifx \citeauthoryear \undefined \def \citeauthoryear#1{#1}\fi
\ifx \endbibitem  \undefined \def \endbibitem {}\fi
\ifx \bconflocation  \undefined \def \bconflocation#1{#1}\fi
\ifx \arxivurl  \undefined \def \arxivurl#1{\textsf{#1}}\fi
\csname PreBibitemsHook\endcsname

\bibitem[\protect\citeauthoryear{Greenberg and Vatsal}{2000}]{GV00}
\begin{barticle}
\bauthor{\bsnm{Greenberg}, \binits{R.}},
\bauthor{\bsnm{Vatsal}, \binits{V.}}:
\batitle{On the {I}wasawa invariants of elliptic curves}.
\bjtitle{Invent. Math.}
\bvolume{142}(\bissue{1}),
\bfpage{17}--\blpage{63}
(\byear{2000})
\doiurl{10.1007/s002220000080}
\end{barticle}
\endbibitem

\bibitem[\protect\citeauthoryear{Emerton et~al.}{2006}]{EPW06}
\begin{barticle}
\bauthor{\bsnm{Emerton}, \binits{M.}},
\bauthor{\bsnm{Pollack}, \binits{R.}},
\bauthor{\bsnm{Weston}, \binits{T.}}:
\batitle{Variation of {I}wasawa invariants in {H}ida families}.
\bjtitle{Invent. Math.}
\bvolume{163}(\bissue{3}),
\bfpage{523}--\blpage{580}
(\byear{2006})
\doiurl{10.1007/s00222-005-0467-7}
\end{barticle}
\endbibitem

\bibitem[\protect\citeauthoryear{Castella and Hsieh}{2018}]{CH18}
\begin{barticle}
\bauthor{\bsnm{Castella}, \binits{F.}},
\bauthor{\bsnm{Hsieh}, \binits{M.-L.}}:
\batitle{Heegner cycles and {$p$}-adic {$L$}-functions}.
\bjtitle{Math. Ann.}
\bvolume{370}(\bissue{1-2}),
\bfpage{567}--\blpage{628}
(\byear{2018})
\doiurl{10.1007/s00208-017-1517-3}
\end{barticle}
\endbibitem

\bibitem[\protect\citeauthoryear{Bertolini et~al.}{2013}]{BDP13}
\begin{barticle}
\bauthor{\bsnm{Bertolini}, \binits{M.}},
\bauthor{\bsnm{Darmon}, \binits{H.}},
\bauthor{\bsnm{Prasanna}, \binits{K.}}:
\batitle{Generalized {H}eegner cycles and {$p$}-adic {R}ankin {$L$}-series}.
\bjtitle{Duke Math. J.}
\bvolume{162}(\bissue{6}),
\bfpage{1033}--\blpage{1148}
(\byear{2013})
\doiurl{10.1215/00127094-2142056} .
\bcomment{With an appendix by Brian Conrad}
\end{barticle}
\endbibitem

\bibitem[\protect\citeauthoryear{Brako\v{c}evi\'{c}}{2011}]{Bra11}
\begin{botherref}
\oauthor{\bsnm{Brako\v{c}evi\'{c}}, \binits{M.}}:
Anticyclotomic {$p$}-adic {$L$}-function of central critical {R}ankin-{S}elberg
  {$L$}-value.
Int. Math. Res. Not. IMRN
(21),
4967--5018
(2011)
\doiurl{10.1093/imrn/rnq275}
\end{botherref}
\endbibitem

\bibitem[\protect\citeauthoryear{Pollack and Weston}{2011}]{PW11}
\begin{barticle}
\bauthor{\bsnm{Pollack}, \binits{R.}},
\bauthor{\bsnm{Weston}, \binits{T.}}:
\batitle{On anticyclotomic {$\mu$}-invariants of modular forms}.
\bjtitle{Compos. Math.}
\bvolume{147}(\bissue{5}),
\bfpage{1353}--\blpage{1381}
(\byear{2011})
\doiurl{10.1112/S0010437X11005318}
\end{barticle}
\endbibitem

\bibitem[\protect\citeauthoryear{Kim}{2017}]{Kim17}
\begin{barticle}
\bauthor{\bsnm{Kim}, \binits{C.-H.}}:
\batitle{Anticyclotomic {I}wasawa invariants and congruences of modular forms}.
\bjtitle{Asian J. Math.}
\bvolume{21}(\bissue{3}),
\bfpage{499}--\blpage{530}
(\byear{2017})
\doiurl{10.4310/AJM.2017.v21.n3.a5}
\end{barticle}
\endbibitem

\bibitem[\protect\citeauthoryear{Castella et~al.}{2017}]{CKL17}
\begin{barticle}
\bauthor{\bsnm{Castella}, \binits{F.}},
\bauthor{\bsnm{Kim}, \binits{C.-H.}},
\bauthor{\bsnm{Longo}, \binits{M.}}:
\batitle{Variation of anticyclotomic {I}wasawa invariants in {H}ida families}.
\bjtitle{Algebra Number Theory}
\bvolume{11}(\bissue{10}),
\bfpage{2339}--\blpage{2368}
(\byear{2017})
\doiurl{10.2140/ant.2017.11.2339}
\end{barticle}
\endbibitem

\bibitem[\protect\citeauthoryear{Lei et~al.}{2023}]{LMX23}
\begin{barticle}
\bauthor{\bsnm{Lei}, \binits{A.}},
\bauthor{\bsnm{Müller}, \binits{K.}},
\bauthor{\bsnm{Xia}, \binits{J.}}:
\batitle{On the iwasawa invariants of bdp selmer groups and bdp p-adic
  l-functions}.
\bjtitle{Forum Mathematicum}
(\byear{2023})
\doiurl{10.1515/forum-2023-0049}
\end{barticle}
\endbibitem

\bibitem[\protect\citeauthoryear{Kriz and Li}{2019}]{KL19}
\begin{barticle}
\bauthor{\bsnm{Kriz}, \binits{D.}},
\bauthor{\bsnm{Li}, \binits{C.}}:
\batitle{Goldfeld's conjecture and congruences between {H}eegner points}.
\bjtitle{Forum Math. Sigma}
\bvolume{7},
\bfpage{15}--\blpage{80}
(\byear{2019})
\doiurl{10.1017/fms.2019.9}
\end{barticle}
\endbibitem

\bibitem[\protect\citeauthoryear{Castella et~al.}{2022}]{CGLS22}
\begin{barticle}
\bauthor{\bsnm{Castella}, \binits{F.}},
\bauthor{\bsnm{Grossi}, \binits{G.}},
\bauthor{\bsnm{Lee}, \binits{J.}},
\bauthor{\bsnm{Skinner}, \binits{C.}}:
\batitle{On the anticyclotomic {I}wasawa theory of rational elliptic curves at
  {E}isenstein primes}.
\bjtitle{Invent. Math.}
\bvolume{227}(\bissue{2}),
\bfpage{517}--\blpage{580}
(\byear{2022})
\doiurl{10.1007/s00222-021-01072-y}
\end{barticle}
\endbibitem

\bibitem[\protect\citeauthoryear{Hida}{2004}]{Hid04}
\begin{bbook}
\bauthor{\bsnm{Hida}, \binits{H.}}:
\bbtitle{{$p$}-adic Automorphic Forms on {S}himura Varieties}.
\bsertitle{Springer Monographs in Mathematics},
p. \bfpage{390}.
\bpublisher{Springer}, \blocation{???}
(\byear{2004}).
\doiurl{10.1007/978-1-4684-9390-0} .
\burl{https://doi.org/10.1007/978-1-4684-9390-0}
\end{bbook}
\endbibitem

\bibitem[\protect\citeauthoryear{Katz}{1973}]{Kat73}
\begin{bchapter}
\bauthor{\bsnm{Katz}, \binits{N.M.}}:
\bctitle{{$p$}-adic properties of modular schemes and modular forms}.
In: \bbtitle{Modular Functions of One Variable, {III} ({P}roc. {I}nternat.
  {S}ummer {S}chool, {U}niv. {A}ntwerp, {A}ntwerp, 1972)}.
\bsertitle{Lecture Notes in Math},
vol. \bseriesno{Vol. 350},
pp. \bfpage{69}--\blpage{190}.
\bpublisher{Springer}, \blocation{???}
(\byear{1973})
\end{bchapter}
\endbibitem

\bibitem[\protect\citeauthoryear{Katz}{1978}]{Kat78}
\begin{barticle}
\bauthor{\bsnm{Katz}, \binits{N.M.}}:
\batitle{{$p$}-adic {$L$}-functions for {CM} fields}.
\bjtitle{Invent. Math.}
\bvolume{49}(\bissue{3}),
\bfpage{199}--\blpage{297}
(\byear{1978})
\doiurl{10.1007/BF01390187}
\end{barticle}
\endbibitem

\bibitem[\protect\citeauthoryear{Tate}{1967}]{Tat67}
\begin{bchapter}
\bauthor{\bsnm{Tate}, \binits{J.T.}}:
\bctitle{{$p$}-divisible groups}.
In: \bbtitle{Proc. {C}onf. {L}ocal {F}ields ({D}riebergen, 1966)},
pp. \bfpage{158}--\blpage{183}.
\bpublisher{Springer}, \blocation{???}
(\byear{1967})
\end{bchapter}
\endbibitem

\bibitem[\protect\citeauthoryear{Katz}{1981}]{Kat81}
\begin{bchapter}
\bauthor{\bsnm{Katz}, \binits{N.}}:
\bctitle{Serre-{T}ate local moduli}.
In: \bbtitle{Algebraic Surfaces ({O}rsay, 1976--78)}.
\bsertitle{Lecture Notes in Math},
vol. \bseriesno{868},
pp. \bfpage{138}--\blpage{202}.
\bpublisher{Springer}, \blocation{???}
(\byear{1981})
\end{bchapter}
\endbibitem

\bibitem[\protect\citeauthoryear{Hida}{1993}]{Hid93}
\begin{bbook}
\bauthor{\bsnm{Hida}, \binits{H.}}:
\bbtitle{Elementary Theory of {$L$}-functions and {E}isenstein Series}.
\bsertitle{London Mathematical Society Student Texts},
vol. \bseriesno{26},
p. \bfpage{386}.
\bpublisher{Cambridge University Press, Cambridge}, \blocation{???}
(\byear{1993}).
\doiurl{10.1017/CBO9780511623691} .
\burl{https://doi.org/10.1017/CBO9780511623691}
\end{bbook}
\endbibitem

\bibitem[\protect\citeauthoryear{Burungale et~al.}{2021}]{BCK21}
\begin{barticle}
\bauthor{\bsnm{Burungale}, \binits{A.}},
\bauthor{\bsnm{Castella}, \binits{F.}},
\bauthor{\bsnm{Kim}, \binits{C.-H.}}:
\batitle{A proof of {P}errin-{R}iou's {H}eegner point main conjecture}.
\bjtitle{Algebra Number Theory}
\bvolume{15}(\bissue{7}),
\bfpage{1627}--\blpage{1653}
(\byear{2021})
\doiurl{10.2140/ant.2021.15.1627}
\end{barticle}
\endbibitem

\bibitem[\protect\citeauthoryear{Hunter~Brooks}{2015}]{Bro15}
\begin{botherref}
\oauthor{\bsnm{Hunter~Brooks}, \binits{E.}}:
Shimura curves and special values of {$p$}-adic {$L$}-functions.
Int. Math. Res. Not. IMRN
(12),
4177--4241
(2015)
\doiurl{10.1093/imrn/rnu062}
\end{botherref}
\endbibitem

\bibitem[\protect\citeauthoryear{Vatsal}{1999}]{Vat99}
\begin{barticle}
\bauthor{\bsnm{Vatsal}, \binits{V.}}:
\batitle{Canonical periods and congruence formulae}.
\bjtitle{Duke Math. J.}
\bvolume{98}(\bissue{2}),
\bfpage{397}--\blpage{419}
(\byear{1999})
\doiurl{10.1215/S0012-7094-99-09811-3}
\end{barticle}
\endbibitem

\bibitem[\protect\citeauthoryear{Fontaine and Ouyang}{2008}]{FO08}
\begin{botherref}
\oauthor{\bsnm{Fontaine}, \binits{J.-M.}},
\oauthor{\bsnm{Ouyang}, \binits{Y.}}:
Theory of p-adic galois representations.
preprint
(2008)
\end{botherref}
\endbibitem

\bibitem[\protect\citeauthoryear{Bloch and Kato}{1990}]{BK90}
\begin{bchapter}
\bauthor{\bsnm{Bloch}, \binits{S.}},
\bauthor{\bsnm{Kato}, \binits{K.}}:
\bctitle{{$L$}-functions and {T}amagawa numbers of motives}.
In: \bbtitle{The {G}rothendieck {F}estschrift, {V}ol. {I}}.
\bsertitle{Progr. Math.},
vol. \bseriesno{86},
pp. \bfpage{333}--\blpage{400}.
\bpublisher{Birkh\"{a}user Boston, Boston, MA}, \blocation{???}
(\byear{1990})
\end{bchapter}
\endbibitem

\bibitem[\protect\citeauthoryear{Scholl}{1990}]{Sch90}
\begin{barticle}
\bauthor{\bsnm{Scholl}, \binits{A.J.}}:
\batitle{Motives for modular forms}.
\bjtitle{Invent. Math.}
\bvolume{100}(\bissue{2}),
\bfpage{419}--\blpage{430}
(\byear{1990})
\doiurl{10.1007/BF01231194}
\end{barticle}
\endbibitem

\bibitem[\protect\citeauthoryear{Magrone}{2022}]{Mag22}
\begin{barticle}
\bauthor{\bsnm{Magrone}, \binits{P.}}:
\batitle{Generalized {H}eegner cycles and {$p$}-adic {$L$}-functions in a
  quaternionic setting}.
\bjtitle{Ann. Sc. Norm. Super. Pisa Cl. Sci. (5)}
\bvolume{23}(\bissue{4}),
\bfpage{1807}--\blpage{1870}
(\byear{2022})
\end{barticle}
\endbibitem

\bibitem[\protect\citeauthoryear{Burungale}{2017}]{Bur17}
\begin{barticle}
\bauthor{\bsnm{Burungale}, \binits{A.A.}}:
\batitle{On the non-triviality of the {$p$}-adic {A}bel-{J}acobi image of
  generalised {H}eegner cycles modulo {$p$}, {II}: {S}himura curves}.
\bjtitle{J. Inst. Math. Jussieu}
\bvolume{16}(\bissue{1}),
\bfpage{189}--\blpage{222}
(\byear{2017})
\doiurl{10.1017/S147474801500016X}
\end{barticle}
\endbibitem

\bibitem[\protect\citeauthoryear{Hsieh}{2014}]{Hsi14}
\begin{barticle}
\bauthor{\bsnm{Hsieh}, \binits{M.-L.}}:
\batitle{Special values of anticyclotomic {R}ankin-{S}elberg {$L$}-functions}.
\bjtitle{Doc. Math.}
\bvolume{19},
\bfpage{709}--\blpage{767}
(\byear{2014})
\doiurl{10.1016/j.cnsns.2013.07.005}
\end{barticle}
\endbibitem

\bibitem[\protect\citeauthoryear{Perrin-Riou}{1987}]{P-R87}
\begin{barticle}
\bauthor{\bsnm{Perrin-Riou}, \binits{B.}}:
\batitle{Fonctions {$L$} {$p$}-adiques, th\'eorie d'{I}wasawa et points de
  {H}eegner}.
\bjtitle{Bull. Soc. Math. France}
\bvolume{115}(\bissue{4}),
\bfpage{399}--\blpage{456}
(\byear{1987})
\end{barticle}
\endbibitem

\bibitem[\protect\citeauthoryear{Castella}{2017}]{Cas17}
\begin{barticle}
\bauthor{\bsnm{Castella}, \binits{F.}}:
\batitle{{$p$}-adic heights of {H}eegner points and {B}eilinson-{F}lach
  classes}.
\bjtitle{J. Lond. Math. Soc. (2)}
\bvolume{96}(\bissue{1}),
\bfpage{156}--\blpage{180}
(\byear{2017})
\doiurl{10.1112/jlms.12058}
\end{barticle}
\endbibitem

\bibitem[\protect\citeauthoryear{Greenberg}{1999}]{Gre99}
\begin{bchapter}
\bauthor{\bsnm{Greenberg}, \binits{R.}}:
\bctitle{Iwasawa theory for elliptic curves}.
In: \bbtitle{Arithmetic Theory of Elliptic Curves ({C}etraro, 1997)}.
\bsertitle{Lecture Notes in Math.},
vol. \bseriesno{1716},
pp. \bfpage{51}--\blpage{144}.
\bpublisher{Springer}, \blocation{???}
(\byear{1999}).
\doiurl{10.1007/BFb0093453} .
\burl{https://doi.org/10.1007/BFb0093453}
\end{bchapter}
\endbibitem

\bibitem[\protect\citeauthoryear{Castella}{2013}]{Cas13}
\begin{botherref}
\oauthor{\bsnm{Castella}, \binits{F.}}:
On the p-adic variation of heegner points.
PhD thesis,
McGill University
(2013)
\end{botherref}
\endbibitem

\end{thebibliography}

\end{document}